\theoremstyle{plain}
    \newtheorem{theorem}{Theorem}[section]
    \newtheorem{lemma}[theorem]{Lemma}
    \newtheorem{corollary}[theorem]{Corollary}
    \newtheorem{proposition}[theorem]{Proposition}
 \theoremstyle{definition}
    \newtheorem{definition}[theorem]{Definition}
    \newtheorem{remark}[theorem]{Remark}
\theoremstyle{remark}
\numberwithin{equation}{section}
 \DeclareMathOperator{\Tr}{Tr}
 \DeclareMathOperator{\tr}{tr}
\DeclareMathOperator{\im}{im}
\DeclareMathOperator{\spec}{spec}
\DeclareMathOperator{\AS}{AS}
\DeclareMathOperator{\APS}{APS}
\DeclareMathOperator{\Id}{Id}
\DeclareMathOperator{\erfc}{erfc}
\DeclareMathOperator{\ind}{index}
\DeclareMathOperator{\End}{End}
\DeclareMathOperator{\Hom}{Hom}
\DeclareMathOperator{\sgn}{sgn}
\DeclareMathOperator{\Spin}{Spin}
         \DeclareMathOperator{\supp}{supp}
\DeclareMathOperator{\vol}{vol}
\begin{document}


\newcommand{\myemph}{\emph}

\newcommand{\Spinc}{\Spin^c}

    \newcommand{\R}{\mathbb{R}}
    \newcommand{\C}{\mathbb{C}}
    \newcommand{\N}{\mathbb{N}}
    \newcommand{\Z}{\mathbb{Z}}
    \newcommand{\Q}{\mathbb{Q}}
    \newcommand{\bT}{\mathbb{T}}
    \newcommand{\bP}{\mathbb{P}}

\newcommand{\g}{\mathfrak{g}}
\newcommand{\h}{\mathfrak{h}}
\newcommand{\p}{\mathfrak{p}}
\newcommand{\kg}{\mathfrak{g}}
\newcommand{\kt}{\mathfrak{t}}
\newcommand{\ka}{\mathfrak{a}}
\newcommand{\XX}{\mathfrak{X}}
\newcommand{\kh}{\mathfrak{h}}
\newcommand{\kp}{\mathfrak{p}}
\newcommand{\kk}{\mathfrak{k}}

\newcommand{\cA}{\mathcal{A}}
\newcommand{\cE}{\mathcal{E}}
\newcommand{\calL}{\mathcal{L}}
\newcommand{\calH}{\mathcal{H}}
\newcommand{\cO}{\mathcal{O}}
\newcommand{\cB}{\mathcal{B}}
\newcommand{\cK}{\mathcal{K}}
\newcommand{\cP}{\mathcal{P}}
\newcommand{\cN}{\mathcal{N}}
\newcommand{\calD}{\mathcal{D}}
\newcommand{\cC}{\mathcal{C}}
\newcommand{\calS}{\mathcal{S}}
\newcommand{\cM}{\mathcal{M}}
\newcommand{\cU}{\mathcal{U}}

\newcommand{\cCM}{\cC}
\newcommand{\PM}{P}
\newcommand{\DM}{D}
\newcommand{\LM}{L}
\newcommand{\vM}{v}

\newcommand{\sumGam}{\textstyle{\sum_{\Gamma}} }

\newcommand{\sigDg}{\sigma^D_g}

\newcommand{\Bigwedge}{\textstyle{\bigwedge}}

\newcommand{\ii}{\sqrt{-1}}

\newcommand{\Ubar}{\overline{U}}

\newcommand{\beq}[1]{\begin{equation} \label{#1}}
\newcommand{\eeq}{\end{equation}}

\newcommand{\Todo}{\textbf{To do}}

\newcommand{\mattwo}[4]{
\left( \begin{array}{cc}
#1 & #2 \\ #3 & #4
\end{array}
\right)
}

\newenvironment{proofof}[1]
{\noindent \emph{Proof of #1.}}{\hfill $\square$}

\title{An absolute version of the Gromov--Lawson relative index theorem}

\author{Peter Hochs\footnote{Institute for Mathematics, Astrophysics and Particle Physics, Radboud University, \texttt{p.hochs@math.ru.nl}}
{}
and Hang Wang\footnote{East China Normal University, \texttt{wanghang@math.ecnu.edu.cn}}}

\date{\today}

\maketitle

\begin{abstract}
A Dirac operator on a complete manifold is Fredholm if it is invertible outside a compact set. Assuming a compact group to act on all relevant structure, and the manifold to have a warped product structure outside such a compact set, 
we express the equivariant index of such a Dirac operator as an Atiyah--Segal--Singer type contribution from inside this compact set, and a contribution from outside this set. Consequences include equivariant versions of the relative index theorem of Gromov and Lawson, in the case of manifolds with warped product structures at infinity, and the Atiyah--Patodi--Singer index theorem.
\end{abstract}

\tableofcontents

\section{Introduction}

Let $M$ be a complete Riemannian manifold, and $S \to M$ a Hermitian vector bundle equipped with a Clifford action $c \colon TM \to \End(S)$. For a Hermitian connection $\nabla$ on $S$ that is compatible with $c$, we have the Dirac operator $D = c \circ \nabla$ on $\Gamma^{\infty}(S)$. This operator is Fredholm as a self-adjoint, unbounded operator on $L^2(S)$, under the condition that there are a compact subset $Z \subset M$ and a $b>0$ such that for all $s \in \Gamma_c^{\infty}(S)$ supported outside $Z$,
\beq{eq invtble at infty}
\|Ds\|_{L^2} \geq b \|s\|_{L^2}.
\eeq
See  \cite{Anghel93b, Gromov83}.

A natural index theorem for such operators is Gromov and Lawson's relative index theorem on  page 304 of \cite{Gromov83}: an expression for the difference of the indices of two such operators $D$, on possibly different manifolds, that agree outside their respective sets $Z$, in terms of Atiyah--Singer type integrals on those compact sets.

Another type of index problem that can be put into this context is Atiyah--Patodi--Singer (APS) type index theorems on compact manifolds with boundary. Then one attaches a cylinder to this boundry, to form a noncompact, but complete manifold without boundary, and studies the corresponding index problem there. This was already used in \cite{APS1}, but see also for example \cite{HWW}.

A widely studied class of index problems for Dirac operators on noncompact manifolds that does not fall into our context is the study of Callias-type operators \cite{Anghel89, Anghel93,  Bott78, Bunke95, Callias78, Kucerovsky01}. These have the form $D+\Phi$, for a vector bundle endmorphism $\Phi$ that makes the combined operator $D+\Phi$ invertible at infinity in the sense of \eqref{eq invtble at infty}, in cases where $D$ itself may not have this property. The operator $D+\Phi$ is not necessarily of the form $c \circ \nabla$, and theferore falls outside our current scope.

Our goal in this paper is to express the index of a Dirac operator that is invertible outside a compact set $Z$, with smooth boundary,  as a contribution of an Atiyah--Singer type contribution from $Z$ and a contribution from outside $Z$. 
We assume that there is a $G$-equivariant isometry $M \setminus Z \cong N \times (0,\infty)$, for a $G$-invariant, compact hypersurface $N \subset M$, with a Riemannian metric of the form
\beq{eq metric intro}
 e^{2\varphi}(g_N + dx^2),
\eeq
for a function $\varphi$ on $(0, \infty)$. 
Furthermore, we consider an action by a compact group $G$ on $M$, preserving all relevant structure, and consider the $G$-equivariant index $\ind_G(D)$ evaluated at a group element $g \in G$.  The index formula, Theorem \ref{thm index general}, then has the form
\beq{eq index intro}
\ind_G(D)(g) = \int_{Z^g}  \AS_g(D)+
\lim_{N \to \partial Z}
\int_0^{\infty}
\int_{N} \tr(g \lambda_s^P(g^{-1}n,n))\, dn
\, ds.
\eeq
Here
\begin{itemize}
\item $Z^g$ is the fixed-point set of $g$ in $Z$;
 \item
  $\AS_g(D)$ is the Atiyah--Segal--Singer integrand \cite{ASIII, BGV} associated to $D$;
\item the limit $\lim_{N \to \partial Z}$ means taking compact, $G$-invariant hypersurfaces $N \subset M \setminus Z$ closer and closer to $\partial Z$, in a way made precise in Corollary \ref{cor index general};
   \item  $ \lambda_s^P$ is the smooth kernel of the  operator  $e_P^{-s D_C^- D_C^+}D_C^-$, where $e_P^{-s D_C^- D_C^+}$  is the heat operator for the restriction $D_C$ of $D$ to $C = M \setminus Z$ with APS-type boundary conditions at $\partial Z$.
\end{itemize}

The proof of \eqref{eq index intro} includes an argument that the  second term on the right hand side converges. Besides the invertibility at infinity of $D$ in the sense of \eqref{eq invtble at infty}, we make some functional-analytic assumptions on the restriction of $D$ to  $M \setminus Z$ with APS-type boundary conditions. These boundary conditions are important in the follow-up paper \cite{HW21b}. Most of the proof of \eqref{eq index intro} is about splitting up an expression for the left hand side of \eqref{eq index intro} into contributions from inside and outside $Z$, incorporating the APS boundary conditions in a suitable way.

A direct consequence of \eqref{eq index intro} is an equivariant version of the Gromov--Lawson relative index theorem, for manifolds with ends of the type considered: if two operators agree outside compact sets, then the second terms on the right hand side of \eqref{eq index intro} cancel when the difference of their indices is taken. See Corollary \ref{cor GL}.

Another consequence of \eqref{eq index intro} is Donnelly's equivariant version of the APS index theorem, see Corollary \ref{cor APS}. In that setting, we compute the second term on the right hand side of \eqref{eq index intro} using an explicit expression for the heat operator $e_P^{-s D_C^- D_C^+}$ from \cite{APS1}, and obtain (an equivariant version of) the $\eta$-invariant of a Dirac operator on $\partial Z$. Here the APS boundary conditions at $\partial Z$ are important.

The computation in the APS setting raises the question when  the contribution from infinity in \eqref{eq index intro} can be computed explicitly, based on knowledge of the behaviour of $M$ and $D$ outside $Z$.
In  \cite{HW21b}, we impose conditions on the function $\varphi$ in \eqref{eq metric intro}, which are satisfied for the cylindrical ends used in the APS case, and hyperbolic cusps. Then we give more explicit expressions for the contribution from infinity in \eqref{eq index intro}.


A result analogous to \eqref{eq index intro} is  Theorem 1.1 in  \cite{CLWY20}. In that result, Dirichlet boundary conditions are used instead of the APS boundary conditions that we need for our purposes.
See also Remarks \ref{rem bdry cond} and \ref{rem APS bdry cond} for the relevance of these boundary conditions.

\subsection*{Acknowledgements}

PH is supported by the Australian Research Council, through Discovery Project DP200100729.
HW is supported by NSFC-11801178 and Shanghai Rising-Star Program 19QA1403200.

\section{The main result}

Throughout this paper, $M$ is a complete Riemannian manifold, and $S  = S^+ \oplus S^-\to M$ a $\Z/2$-graded Hermitian vector bundle.
We denote the Riemannian density on $M$ by $dm$. We also assume that a compact Lie group $G$ acts smoothly and isometrically on $M$, that $S$ is a $G$-equivariant vector bundle and that the action on $G$ preserves the metric and grading on $S$. We fix, once and for all, an element $g \in G$.

\subsection{Dirac operators that are invertible at infinity}\label{sec dirac ops}

We suppose that $S$ is a $G$-equivariant Clifford module, which means that there is a $G$-equivariant vector bundle homomorphism $c\colon TM \to \End(S)$, with values in the odd-graded endomorphisms, such that for all $v \in TM$,
\[
c(v)^2 = -\|v\|^2 \Id_S.
\]
Let $\nabla$ be a $G$-invariant, Hermitian connection on $S$ that preserves the grading. Suppose that for all vector fields $v$ and $w$ on $M$,
\[
[\nabla_v, c(w)] = c(\nabla^{TM}_v w),
\]
where $\nabla^{TM}$ is the Levi--Civita connection. Consider the Dirac operator
\beq{eq Dirac}
D\colon \Gamma^{\infty}(S) \xrightarrow{\nabla} \Gamma^{\infty}(S \otimes T^*M) \cong
 \Gamma^{\infty}(S \otimes TM) \xrightarrow{c}  \Gamma^{\infty}(S).
\eeq
It is odd with respect to the grading on $S$; we denote its restrictions to even- and odd-graded sections by $D^{\pm}$, respectively.

From now on, we assume that $D$ is \emph{invertible at infinity}, in the sense that there are a $G$-invariant compact subset $Z\subset M$ and a constant $b>0$ such that for all $s \in \Gamma^{\infty}_c(S)$ supported in $C:= M \setminus Z$,
\beq{eq D inv infty}
\|Ds\|_{L^2} \geq b \|s\|_{L^2}.
\eeq
For $k\geq 1$, let $W^k_D(S)$ be the completion of $\Gamma^{\infty}_c(S)$ in the inner product
\[
(s_1, s_2)_{W^k_D} := \sum_{j=0}^k(D^js_1, D^j s_2)_{L^2}.
\]
Because $D$ is invertible at infinity, it is Fredholm as an operator
\[
D\colon W^1_D(S) \to L^2(S).
\]
See Theorem 2.1 in \cite{Anghel93b} or Theorem 3.2 in \cite{Gromov83}.


There is no loss of generality in assuming that $N:= \partial Z$ is a smooth submanifold of $M$. We make this assumption from now on. We assume that 
there is a $G$-equivariant isometry
\beq{eq isom U}
C \cup N \cong N \times [0,\infty),
\eeq
mapping $N$ to $N \times \{0\}$, and
where the Riemannian metric on $N \times (0,\infty)$ is of the form
\beq{eq metric U}
 e^{2\varphi}(g_N + dx^2),
 \eeq
 where
\begin{itemize}
\item $\varphi \in C^{\infty}(0,\infty)$;
\item $g_N$ is the restriction of the Riemannian metric to $N$;
\item $x$ is the coordinate in $(0,\infty)$.
\end{itemize}

Furthermore, we assume that, with respect to the identification \eqref{eq isom U}, we have a $G$-equivariant isomorphism of vector bundles
\[
S|_U \cong S|_N \times (0,\infty) \to N \times (0,\infty),
\]
and
\beq{eq D on U}
D|_U = \sigma \left(f_1 \frac{\partial}{\partial x} + f_2 D_N + f_3 \right),
\eeq
where
\begin{itemize}
\item $\sigma \in \End(S|_N)^G$ is fibre-wise unitary, satisfies $\sigma^2 = 1$, and interchanges $S^+|_N$ and $S^-|_N$;
\item $f_1, f_2, f_3 \in C^{\infty}(0,\infty)$;
\item $D_N$ is a $G$-equivariant Dirac operator on $S|_N$ that preserves the grading.
\end{itemize}
We write $D_N^{\pm}$ for the restriction of $D_N$ to an operator on $\Gamma^{\infty}(S^{\pm}|_N)$.

\subsection{The index theorem} \label{sec index thm general}

Because $N$ is compact, $D_N^+$ has discrete spectrum. For $\lambda \in \spec(D_N^+)$, let $L^2(S^+|_N)_{\lambda}$ be the corresponding eigenspace. Let $J \subset \spec(D_N^+)$. Consider the orthogonal projection
\[
P^+\colon L^2(S^+|_N) \to \bigoplus_{\lambda \in J} L^2(S^+|_N)_{\lambda}.
\]
We will also use the projection
\[
P^- := \sigma_+ P^+ \sigma_+^{-1} \colon L^2(S^-|_N) \to  \sigma_+\left(\bigoplus_{\lambda \in J} L^2(S^+|_N)_{\lambda}\right),
\]
where $\sigma_+:= \sigma|_{(S^+|_N)}$.
Note that $P^-$ is not necessarily a spectral projection\footnote{In the situation of \cite{HW21b}, we have $D_N^- = -\sigma_+ D_N^+ \sigma_+^{-1}$, so $P^-$ is in fact the spectral projection for $D_N^-$ onto the eigenspaces corresponding to $-J$.} for $D_N^-$. We combine these to an orthogonal projection
\beq{eq def P}
P := P^+ \oplus P^-\colon L^2(S|_N) \to \bigoplus_{\lambda \in J} L^2(S^+|_N)_{\lambda} \oplus
\sigma_+\left(\bigoplus_{\lambda \in J} L^2(S^+|_N)_{\lambda}\right).
\eeq
We
 will omit the superscripts $\pm$ from $P^{\pm}$ from now on.

Consider  the vector bundle 
\[
S_C := S|_C \to C = M\setminus Z.
\]
For $k \in \N$, consider the Sobolev space
\[
W^k_D(S_C) :=\{s|_C; s \in W^k_D(S)\}.
%
\]
We denote the subspaces of even- and odd-graded sections by $W^k_D(S_C^{\pm})$, respectively.
We denote the restrictions of $D$ to operators from $W^{k+1}_D(S_C)$ to $W^k_D(S_C)$ by $D_C$.

Consider the spaces
\[
\begin{split}
W^1_D(S_C^+; P) &:= \{s \in W^1_D(S_C^+); P(s|_N) = 0\};\\
W^1_D(S_C^-;1- P) &:= \{s \in W^1_D(S_C^-); (1-P)(s|_N) = 0\};\\
W^2_D(S_C^+; P) &:= \{s \in W^2_D(S_C^+); P(s|_N) = 0,  (1-P)(D^+_Cs|_N) = 0\};\\
W^2_D(S_C^-; 1-P) &:= \{s \in W^2_D(S_C^-); (1-P)(s|_N) = 0,  P(D^-_Cs|_N) = 0\}.
\end{split}
\]
Here we use the fact that there are well-defined, continuous restriction/extension maps $W^1_D(S_C) \to L^2(S|_N)$.
We assume that
\begin{itemize}
\item the operators
\beq{eq DC+}
D^+_C\colon W^1_D(S_C^+; P) \to L^2(S_C^-)
\eeq
and
\beq{eq DC-}
D^-_C\colon W^1_D(S_C^-; 1-P) \to L^2(S_C^+)
\eeq
are invertible; and
\item the operators
\beq{eq DC-DC+}
D^-_C D_C^+\colon W^2_D(S_C^+; P) \to L^2(S_C^+)
\eeq
and
\beq{eq DC+DC-}
D^+_C D_C^-\colon W^2_D(S_C^-; 1-P) \to L^2(S_C^-)
\eeq
 are self-adjoint.
\end{itemize}
\begin{remark}\label{rem bdry cond}
These conditions on the operators \eqref{eq DC+}--\eqref{eq DC+DC-} depend on the set $J \subset \spec(D_N^+)$ that determines $P$, and in concrete situations this set should be chosen so that those conditions hold. In the setting of \cite{HW21b}, and in Section \ref{sec APS}, the conditions are satisfied if $J$ is the positive part of the spectrum of $D_N^+$.
See also Remark \ref{rem APS bdry cond} for comments on the role of APS-type boundary conditions.
\end{remark}

For $s>0$, let $e_{P}^{-s D_C^- D_C^+}$ be the heat operator for the operator \eqref{eq DC-DC+}.
We will see in Lemma \ref{lem smooth At} that the operator $e_{P}^{-s D_C^- D_C^+}D_C^-$ has a smooth kernel $ \lambda_s^P$.
For $a' \in (0,1)$, consider the \emph{contribution from infinity}
\beq{eq def At}
A_{g}(D_C, a') := -f_1(a') \int_0^{\infty}
\int_N \tr(g \lambda_s^P(g^{-1}n, a'; n, a'))\, dn\,
ds,
\eeq
defined whenever
 the integral in \eqref{eq def At} converges. If this is the case, then we say that ``$A_{g}(D_C, a')$ converges''.


For a finite-dimensional vector space $V$ with a given representation by $G$, we write $\tr(g|_V)$ for the trace of the action by $g$ on $V$.
The classical equivariant index of the $G$-equivariant Fredholm operator $D$ is
\[
\ind_G(D) := [\ker(D^+)] - [\ker(D^-)]
\]
 in the representation ring $R(G)$, and $\ind_G(D)(g)$ is the value of its character at $g$:
\[
\ind_G(D)(g) := \tr(g|_{\ker(D^+)}) - \tr(g|_{\ker(D^-)}).
\]

Let $\AS_g(D)$ be the Atiyah--Segal--Singer integrand associated to $D$, see for example Theorem 6.16 in \cite{BGV} or Theorem 3.9 in \cite{ASIII}. It is a differential form of mixed degree on the fixed-point set $M^g$ of $g$. The connected components of $M^g$ may have different dimensions, and the integral of $\AS_g(D)$ over $M^g$ is defined as the sum over these connected components of the integral of the component of the relevant degree.
\begin{theorem}[Index theorem for Dirac operators invertible at infinity]\label{thm index general}
For all $a' \in (0,\infty)$,
the contribution from infinity $A_{g}(D_C, a')$ converges, and
\beq{eq index general}
\ind_G(D)(g) = \int_{Z^g \cup (N^g \times (0,a'])}\AS_g(D)+A_{g}(D_C, a').
\eeq
\end{theorem}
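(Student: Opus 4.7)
The plan is to combine the McKean--Singer heat-kernel representation of the equivariant index with a decomposition of the heat supertrace along the hypersurface $N \times \{a'\}$, identifying the inside portion with the Atiyah--Segal--Singer integrand via the local index theorem for manifolds with boundary, and the outside portion with $A_{g}(D_C, a')$ via a Green's identity on the warped-product end.

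The starting point is that, because $D$ is invertible at infinity with gap $b>0$, the essential spectrum of $D^2$ lies in $[b^2,\infty)$, so $\ker D^{\pm} \subset L^2(S)$ are finite dimensional and the difference of heat operators $e^{-tD^-D^+}-e^{-tD^+D^-}$ is trace class for every $t>0$; thus
\[
\ind_G(D)(g) = \Str\bigl(g\bigl(e^{-tD^-D^+}-e^{-tD^+D^-}\bigr)\bigr)
\]
for all $t>0$. I then split the integral of the pointwise supertrace of this kernel over $M$ into the region $Z' := Z \cup (N \times (0,a'])$ and the region $N \times [a',\infty)$. Using a cutoff and Duhamel-type argument, I replace the global heat operators by the heat operators on $Z'$ (with APS-type condition given by $P$ at the cut $N \times \{a'\}$) and on $C$ (with APS-type condition at $\partial Z$, evaluated at the interior points $(n,a')$), absorbing the cross-terms into errors that vanish in the relevant limit.

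On $Z'$, sending $t \to 0$ the equivariant local index theorem of Atiyah--Patodi--Singer type produces the integral $\int_{Z^g \cup (N^g \times (0,a'])} \AS_g(D)$ together with a boundary correction at $N \times \{a'\}$. This correction is to be combined with the outside term into the single expression $A_{g}(D_C,a')$. The outside contribution is computed using the warped-product form \eqref{eq D on U}: the coefficient $\sigma f_1$ acts as Clifford multiplication by the outward unit normal, $\partial/\partial x$ becomes the normal derivative, and Green's identity applied to the (formal) resolvent formula
\[
(D_C^+)^{-1} = \int_0^{\infty} D_C^-\, e_P^{-s D_C^- D_C^+}\, ds
\]
converts the boundary contribution at $N \times \{a'\}$ into exactly $-f_1(a')\int_0^\infty \int_N \tr(g\, \lambda_s^P(g^{-1}n,a';n,a'))\,dn\,ds$, with the sign and factor $f_1(a')$ produced by the conventions for the outward normal and the coefficient of $\partial_x$ in \eqref{eq D on U}.

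The main obstacle, and where the bulk of the technical work will lie, is the justification of the various convergences. Three distinct issues must be controlled: first, that $A_{g}(D_C,a')$ itself converges (the $s \to 0$ behavior of $\lambda_s^P$ at the interior diagonal point is well behaved because $(n,a')$ lies strictly away from $\partial Z$, while $s \to \infty$ decay follows from the spectral gap of the operator \eqref{eq DC-DC+} inherited from invertibility at infinity); second, that the Duhamel cross-terms arising from the spatial cutoff vanish in the limit; and third, that the APS boundary contribution on $Z'$ matches the difference between the heat kernel on $C$ and the corresponding kernel on the shifted end $N \times [a',\infty)$. This last matching relies essentially on the self-adjointness hypotheses on \eqref{eq DC-DC+} and \eqref{eq DC+DC-}, together with the well-posedness of the restriction/extension maps $W^1_D(S_C) \to L^2(S|_N)$, which together guarantee that the boundary data at $N \times \{a'\}$ produced by the two sides of the splitting are compatible.
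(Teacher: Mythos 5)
Your overall architecture --- cut at $N\times\{a'\}$, local index theorem inside, the resolvent formula $\int_0^{\infty}e_P^{-sD_C^-D_C^+}D_C^-\,ds=(D_C^+)^{-1}$ outside --- is close in spirit to the paper's, which glues the parametrix $\tilde Q$ on a compact double to $(D_C^+)^{-1}$ by a smooth cutoff $\psi$ and only at the end lets $\psi'\to-\delta_{a'}$. But two of the steps you defer to ``technical work'' are where the entire proof lives, and one of your stated justifications is incorrect. The small-$s$ convergence of $A_g(D_C,a')$ does \emph{not} follow from $(n,a')$ lying in the interior of $C$: the problem is not proximity to $\partial Z$ but the on-diagonal singularity of the kernel of $e_P^{-sD_C^-D_C^+}D_C^-$ as $s\downarrow 0$, i.e.\ the pointwise trace of $g\,e^{-sD^2}D$ near the diagonal, whose integrability at $s=0$ is a Bismut--Freed-type cancellation that the paper can only verify directly for twisted $\Spinc$-Dirac operators (via Zhang's estimate). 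In the general case the paper does not prove this convergence head-on; it deduces it \emph{a posteriori} from the identity $\ind_G(D)(g)=\Tr(g\circ e^{-t\tilde D^-\tilde D^+}\psi)-\Tr(g\circ e^{-t\tilde D^+\tilde D^-}\psi)+A_g^t(D_C,\psi')+F(t)$, whose other terms are shown to converge as $t\downarrow 0$. Your proposal has no substitute for this indirect argument.

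The second gap is the claim that the Duhamel cross-terms vanish. This is the crux, and it is delicate precisely because $P$ is a nonlocal boundary condition: the principle of not feeling the boundary compares \emph{Dirichlet} heat kernels with interior ones, and to transfer it to the APS setting the paper must first prove $e_P^{-tD_C^-D_C^+}D_C^-=e_F^{-tD_C^-D_C^+}D_C^-P+D_C^-e_F^{-tD_C^+D_C^-}(1-P)$ (Proposition \ref{prop eF eP}) and then control compositions of small kernels with $P$ by a Weyl-law trace estimate (Lemma \ref{lem ker proj}); Remark \ref{rem APS bdry cond} shows the statement is actually false with Dirichlet conditions, so this interplay cannot be waved away. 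Relatedly, your opening assertion that $e^{-tD^-D^+}-e^{-tD^+D^-}$ is trace class on the noncompact $M$ because the essential spectrum of $D^2$ lies in $[b^2,\infty)$ is unsubstantiated --- trace-classness of such a difference needs kernel decay estimates, not just a spectral gap --- and the paper sidesteps it entirely by computing a $g$-index from parametrix remainders $S_\pm$, $S_+'$ that are individually shown to be $g$-trace class.
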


The limit $a' \downarrow 0$ yields a version of Theorem \ref{thm index general} that is independent of $a'$.
\begin{corollary} \label{cor index general}
We have
\beq{eq index general 2}
\ind_G(D)(g) = \int_{Z^g}\AS_g(D)+ \lim_{a' \downarrow 0}A_{g}(D_C, a').
\eeq
\end{corollary}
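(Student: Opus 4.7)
The plan is to deduce the corollary directly from Theorem \ref{thm index general} by taking the limit $a' \downarrow 0$ in the identity
\[
\ind_G(D)(g) = \int_{Z^g \cup (N^g \times (0,a'])}\AS_g(D)+A_{g}(D_C, a'),
\]
which holds for every $a' \in (0,\infty)$. Since the left-hand side is independent of $a'$, once one of the two terms on the right has a limit, so does the other. Hence the entire argument reduces to showing
\[
\lim_{a' \downarrow 0} \int_{Z^g \cup (N^g \times (0,a'])}\AS_g(D) = \int_{Z^g}\AS_g(D),
\]
after which the existence of $\lim_{a' \downarrow 0}A_{g}(D_C, a')$, and the formula \eqref{eq index general 2}, follow automatically.

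Next, I would note that under the isometry \eqref{eq isom U}, the end $C \cup N$ is identified with $N \times [0,\infty)$, and so $(N \times [0,\infty))^g = N^g \times [0,\infty)$. Therefore $Z^g \cup (N^g \times (0,a'])$ is the disjoint union of $Z^g$ and $N^g \times (0,a']$, giving
\[
\int_{Z^g \cup (N^g \times (0,a'])}\AS_g(D)
= \int_{Z^g}\AS_g(D) + \int_{N^g \times (0,a']}\AS_g(D).
\]
Thus it suffices to prove that $\int_{N^g \times (0,a']}\AS_g(D) \to 0$ as $a' \downarrow 0$.

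To see this, I would decompose $N^g$ into its connected components, which are compact. On a component of dimension $n$, the corresponding component of $M^g$ has the form (a component of) $N^g_0 \times [0,\infty)$ of dimension $n+1$, and the relevant part of $\AS_g(D)$ to integrate is its degree-$(n+1)$ piece, which is a smooth $(n+1)$-form. In product coordinates it takes the form $\omega(n,x) \wedge dx$ for a smooth $x$-dependent $n$-form $\omega$ on $N^g_0$. Since $N^g_0 \times [0,1]$ is compact, $\omega$ is bounded in $L^\infty$ there, and hence
\[
\left| \int_{N^g_0 \times (0,a']} \AS_g(D) \right|
\leq a' \cdot \sup_{N^g_0 \times [0,1]} |\omega| \cdot \vol(N^g_0),
\]
which tends to $0$ as $a' \downarrow 0$. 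Summing over the finitely many components of $N^g$ yields the desired conclusion.

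There is no serious obstacle here: the only input beyond Theorem \ref{thm index general} is the smoothness of the Atiyah--Segal--Singer integrand and compactness of $N^g$. The role of the corollary is cosmetic, repackaging the $a'$-dependent statement of Theorem \ref{thm index general} into a form in which the geometric contribution from $Z^g$ is manifestly separated from a well-defined contribution from infinity.
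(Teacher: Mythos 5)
Your proof is correct and is essentially the paper's own argument: the paper's proof of this corollary is the one-line observation that the first term on the right of \eqref{eq index general} tends to $\int_{Z^g}\AS_g(D)$ as $a'\downarrow 0$, with the existence of $\lim_{a'\downarrow 0}A_g(D_C,a')$ then forced by the $a'$-independence of the left-hand side, exactly as you argue. You have merely made explicit the smoothness-and-compactness reason why the integral over $N^g\times(0,a']$ vanishes in the limit, which the paper leaves implicit.
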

\begin{proof}
The first term in the right hand side of \eqref{eq index general} approaches the first term on the right hand side of \eqref{eq index general 2} as $a' \downarrow 0$.
\end{proof}

\begin{remark}\label{rem zero at a}
In Corollary \ref{cor index general}, the limit $\lim_{a' \downarrow 0}A_{g}(D_C, a')$ exists, but it generally does \emph{not} equal the right hand side of \eqref{eq def At} with $a'$ replaced by $0$. In other words, the limit $a' \downarrow 0$ may not be taken inside the integral over $s$. Indeed, we will see in Proposition \ref{prop eF eP} that
\[
\int_N \tr(g \lambda_s^P(g^{-1}n, 0; n, 0)) = 0
\]
for all $n \in N$. For an explicit example, see Remark \ref{rem cyl zero at a}.
\end{remark}


\subsection{Special cases}\label{sec special cases}

An immediate consequence of Theorem \ref{thm index general} is the following equivariant version of \emph{Gromov and Lawson's relative index theorem} on page 304 of \cite{Gromov83}. This shows that, already  in the case where $G$ is trivial, Theorem \ref{thm index general} can be viewed as an absolute version of Gromov and Lawson's result, for manifolds with warped product structures at infinity.
\begin{corollary}
\label{cor GL}
For $j=1,2$, let $M_j$, $S_j$, $D_j$ and $Z_j$ be objects like $M$, $S$, $D$ and $Z$, respectively. Suppose that there is a $G$-equivariant isometry $M_1 \setminus  Z_1 \cong M_2 \setminus Z_2$ that preserves all structure.
%
%
%
Then
\beq{eq rel index}
\ind_G(D_1)(g) - \ind_G(D_2)(g) = \int_{Z_1^g} \AS_g(D_1) -
\int_{Z_2^g} \AS_g(D_2).
\eeq
\end{corollary}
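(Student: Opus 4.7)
The plan is to apply Corollary \ref{cor index general} to both operators $D_1$ and $D_2$ and subtract, exploiting the fact that the contributions from infinity depend only on the geometric and analytic data on the complements $C_j := M_j \setminus Z_j$, which are identified by the hypothesized isometry.

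More precisely, first I would apply Corollary \ref{cor index general} to each of $D_1$ and $D_2$, obtaining
\[
\ind_G(D_j)(g) = \int_{Z_j^g} \AS_g(D_j) + \lim_{a' \downarrow 0} A_g(D_{j,C}, a'), \qquad j=1,2.
\]
The desired identity \eqref{eq rel index} will follow immediately once I show that
\[
\lim_{a' \downarrow 0} A_g(D_{1,C}, a') = \lim_{a' \downarrow 0} A_g(D_{2,C}, a').
\]

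To establish this equality, I would inspect the definition \eqref{eq def At} of the contribution from infinity and note that every ingredient is intrinsic to the end $C_j \cup N_j$: the warping function $\varphi$ and the coefficients $f_1, f_2, f_3$ in \eqref{eq D on U} appear only through the restriction $D_{j,C}$; the boundary projection $P$ is defined purely in terms of $D_N$ acting on $S|_N$; and the operator $D_{j,C}^- D_{j,C}^+$ with its APS-type domain $W^2_D(S_{C_j}^+; P)$ is determined by $D_j$ on $C_j$ together with $P$. Consequently the heat operator $e_P^{-s D_{j,C}^- D_{j,C}^+}$ and its composition with $D_{j,C}^-$, hence the smooth kernel $\lambda_s^P$, depend only on the data on $C_j \cup N_j$.

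Under the $G$-equivariant structure-preserving isometry $M_1 \setminus Z_1 \cong M_2 \setminus Z_2$, the tuples $(C_1, S|_{C_1}, D_{1,C}, N_1)$ and $(C_2, S|_{C_2}, D_{2,C}, N_2)$ are identified, so the two sides of \eqref{eq def At} coincide for every $a' \in (0,1)$; in particular the limits $a' \downarrow 0$ agree. Subtracting the two instances of Corollary \ref{cor index general} then yields \eqref{eq rel index}. There is no real obstacle here: the entire content of the corollary is packaged in the intrinsic nature of $A_g(D_C, a')$, which is manifest from its definition once Theorem \ref{thm index general} is granted.
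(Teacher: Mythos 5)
Your proposal is correct and matches the paper's own argument: the paper likewise applies Corollary \ref{cor index general} to both operators and observes that $A_g((D_1)_{C_1},a') = A_g((D_2)_{C_2},a')$ for all $a'$ because this quantity is built entirely from the data on the ends, which the isometry identifies, so the contributions from infinity cancel upon subtraction.
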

\begin{proof}
In this setting, the number $A_{g}((D_1)_{C_1}, a')$ associated to $D_1$ as in \eqref{eq def At} equals the number $A_{g}((D_2)_{C_2}, a')$ associated to $D_2$ for all $a' \in (0,1)$, so the contributions from infinity in Corollary \ref{cor index general} cancel.
\end{proof}


In the special case where $C ={M \setminus Z}$ is equivariantly isometric to a product $N \times (0, \infty)$, Theorem \ref{thm index general} implies an equivariant version of the \emph{Atiyah--Patodi--Singer index theorem} \cite{APS1, Donnelly}. In that case, suppose that on $C$, the Dirac operator $D$ has a product form
\[
D|_C = \sigma \left( \frac{\partial}{\partial x}+D_N\right)
\]
for an odd vector bundle isomorphism $\sigma\colon S|_N \to S|_N$, an even Dirac operatoer $D_N$ on $S|_N$, and where $x$ is the coordinate in $(0, \infty)$. Suppose that $D_N$ is invertible; then $D$ is invertible outside $Z$ in the sense of \eqref{eq D inv infty}.
In this case, $\ind(D)$ is the equivariant APS-index of the restriction of $D$ to $Z$, which we denote by $\ind^{\APS}_G(D_Z)$.

Let $D_N^+$ be the restriction of $D_N$ to even-graded sections.
Its  \emph{$g$-delocalised $\eta$-invariant}  is
\[
\eta_g(D_N^+) := \frac{1}{\sqrt{\pi}} \int_0^{\infty} \Tr\bigl(g \circ e^{-s (D_N^+)^2}D_N^+ \bigr)\frac{1}{\sqrt{s}}\, ds,
\]
if this converges.
\begin{corollary}[Donnelly--APS index theorem]\label{cor APS}
The $g$-delocalised $\eta$-invariant of $D_N^+$ converges, and
\beq{eq cor APS}
\ind^{\APS}_G(D_Z)(g) = \int_{Z^g} \AS_g(D) -\frac{1}{2} \eta_g(D_N^+),
\eeq
\end{corollary}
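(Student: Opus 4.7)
The plan is to apply Corollary \ref{cor index general}: its first term is already $\int_{Z^g}\AS_g(D)$, so the entire task reduces to evaluating $\lim_{a' \downarrow 0} A_{g}(D_C, a')$ and identifying it with $-\tfrac{1}{2}\eta_g(D_N^+)$, while simultaneously establishing convergence of the delocalised $\eta$-invariant.

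On the cylinder $C = N \times (0,\infty)$, the product form $D|_C = \sigma(\partial_x + D_N)$ together with the standard compatibility $D_N^- = -\sigma_+ D_N^+ \sigma_+^{-1}$ (see the footnote after \eqref{eq def P}) yields $D_C^- D_C^+ = -\partial_x^2 + (D_N^+)^2$. Taking $J$ to be the positive part of $\spec(D_N^+)$ makes $P$ the classical APS spectral projection, for which the operator hypotheses \eqref{eq DC+}--\eqref{eq DC+DC-} hold since $D_N$ is invertible. Separation of variables against an $L^2$-eigenbasis of $D_N^+$ then decomposes $e_P^{-sD_C^- D_C^+}$ as a direct sum, over $\lambda \in \spec(D_N^+)$, of one-dimensional half-line heat operators for $-\partial_x^2 + \lambda^2$ on $(0,\infty)$ with Dirichlet boundary condition at $x=0$ for $\lambda > 0$ and Neumann boundary condition for $\lambda < 0$, whose explicit kernels
\[
k_s^{\lambda,\pm}(x,y) = (4\pi s)^{-1/2}\, e^{-s\lambda^2}\bigl(e^{-(x-y)^2/4s} \mp e^{-(x+y)^2/4s}\bigr)
\]
appear in \cite{APS1}. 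I would then apply $D_C^-$ to this expansion to read off $\lambda_s^P$, substitute into \eqref{eq def At}, take the $g$-twisted fibrewise trace at $x=a'$, and integrate over $n \in N$; the $n$-integral on each spectral slice collapses to the character $\tr(g|_{L^2(S^+|_N)_\lambda})$. After performing the $s$-integration using the Mellin identity $\int_0^\infty e^{-s\lambda^2}\, ds/\sqrt{s} = \sqrt{\pi}/|\lambda|$, the $a' \downarrow 0$ limit reassembles the result into
\[
-\tfrac{1}{2\sqrt{\pi}}\int_0^\infty \Tr\bigl(g\circ e^{-s(D_N^+)^2} D_N^+\bigr)\,\frac{ds}{\sqrt{s}} = -\tfrac{1}{2}\eta_g(D_N^+).
\]

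The main obstacle will be justifying the interchange of the $a' \downarrow 0$ limit with the $s$-integration: as Remark \ref{rem zero at a} warns, the naive pointwise limit of the integrand vanishes, so the non-zero answer must emerge via a dominated-convergence argument using the rapid decay of $e^{-s\lambda^2}$ both at large $\lambda$ (by Weyl's law on $N$) and at large $s$, combined with the small-$s$ behaviour of the Gaussian reflection $e^{-(a')^2/s}$. Simultaneously, the same decay estimate provides the absolute convergence of the double sum--integral defining $\eta_g(D_N^+)$, hence the assertion that it converges. A careful bookkeeping of signs arising from $\sigma$, the splitting $P = P^+ \oplus P^-$, and the Dirichlet-versus-Neumann reflections in $k_s^{\lambda,\pm}$ is then needed to pin down the precise factor $-\tfrac{1}{2}$ in \eqref{eq cor APS}.
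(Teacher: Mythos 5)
Your high-level strategy matches the paper: invoke Corollary~\ref{cor index general}, reduce to computing $A_g(D_C,a')$ on the cylinder via the explicit APS heat kernel from \cite{APS1}, apply $D_C^-$ to get $\lambda_s^P$, collapse the $n$-integral to characters $\tr(g|_{\ker(D_N^+ - \lambda)})$, and recognize the $\eta$-integral. Two points in your sketch, however, are problematic.

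First, a small inaccuracy: for $\lambda < 0$ the APS projection does not reduce to a Neumann condition on the half-line factor. The boundary condition is of Robin/oblique type, $(\partial_x - |\lambda|)\psi(0) = 0$, and the corresponding kernel contains an additional $\erfc$ correction term (visible in (2.17) of \cite{APS1} and in Proposition~\ref{prop kappa cyl} of the paper) beyond the even Gaussian reflection. This extra term is not cosmetic; it contributes to the cancellation discussed below.

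Second, and more seriously, your plan for handling the $a'$-dependence cannot work as stated. Once you substitute the kernel into \eqref{eq def At}, the $s$-integrand at $x=y=a'$ splits into a term $\frac{e^{-\lambda^2 s}}{\sqrt{4\pi s}}\lambda$ (which yields $-\tfrac{1}{2}\eta_g$ via the Mellin identity) \emph{plus} $a'$-dependent terms carrying factors $e^{-a'^2/s}\bigl(\tfrac{a'}{s}-|\lambda|\bigr)$. You propose to deal with the latter by interchanging $\lim_{a'\downarrow 0}$ with $\int_0^\infty ds$ via dominated convergence, but the pointwise $a'\downarrow 0$ limit of the integrand is \emph{zero} in $s$ (this is exactly Remark~\ref{rem zero at a}), so DCT would incorrectly annihilate the whole answer. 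The correct mechanism — and the crux of the paper's proof, Proposition~\ref{prop vanish} — is that the $a'$-dependent double sum-integral vanishes for \emph{every fixed} $a' > 0$. This requires a nontrivial argument: a substitution $s \mapsto a'^2/(\lambda^2 s)$ converts the $\tfrac{a'}{s}$ piece into a $|\lambda|$ piece with a shifted integration interval, and one then needs uniform $t$-independent bounds (split according to $t \lessgtr a'/|\lambda|$) to justify interchanging the sum over $\lambda$ with the $s$-integral at fixed $a'$. In particular, $A_g(D_C,a') = -\tfrac{1}{2}\eta_g(D_N^+)$ already holds for all $a'>0$, so the $a'\downarrow 0$ limit in Corollary~\ref{cor index general} is never actually invoked in a nontrivial way; this is the step missing from your sketch.
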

This is the case of Theorem 1.2 in \cite{Donnelly} where $D_N^+$ is invertible.
If $D_N^+$ is not invertible, then this result can be obtained in a similar way,  through a spectral shift of $D_N^+$. See for example  \cite{Melrose}, Subsection 5.1 of \cite{HW21b} or Section 6 of \cite{HWW}.

Corollary \ref{cor APS} is deduced from Corollary \ref{cor index general} in Section \ref{sec APS}.

\section{Smooth kernels}\label{sec smooth kernels}

To prepare for the proof of Theorem \ref{thm index general} in Section \ref{sec proof index thm}, we obtain some results on certain smooth kernel operators. One of these is a relation between heat operators with APS and Dirichlet boundary conditions, Proposition \ref{prop eF eP}.

\subsection{Kernels and projections}\label{sec ker proj}

A key reason why the index of $D$ splits up into an interior component and a contribution from infinity, as in Theorem \ref{thm index general}, is a vanishing result for a certain difference of two parametices on $U$, Proposition \ref{prop lem 5.5}. In the proof of that proposition, we use the `principle of not feeling the boundary' to show that heat kernels defined with respect to different boundary conditions are equal in the short time limit. That principle applies to local, e.g.\ Dirichlet boundary conditions. To apply it in our setting, where we use APS boundary conditions, we analyse how the principle of not feeling the boundary interacts with the projection $P$ in \eqref{eq def P}. This is starts with Lemma \ref{lem ker proj}, which we prove in this subsection.

By identifying $S|_N^* \cong S|_N$ via the Hermitian metric on $S|_N$, we may consider the operator
$D_N\otimes D_N$ on $\Gamma^{\infty} (S|_N \boxtimes S|_N^*)$. For any $\kappa \in \Gamma^{\infty}(S|_N \boxtimes S|_N^*)$, we denote the corresponding bounded operator on $L^2(S|_N)$ by $\kappa$ as well. For such a $\kappa$,  the composition $ \kappa \circ P$, with $P$ as in \eqref{eq def P}, still has a smooth kernel. We denote this kernel by $\kappa \circ P$.

%
%
\begin{lemma}\label{lem ker proj}
There is a constant $C_1>0$, depending only on the dimension and volume of $N$,  such that the following holds.
Let $\kappa \in \Gamma^{\infty}(S|_N \boxtimes S|_N^*)$, and suppose that the corresponding operator is trace-class. Suppose that there  are an integer $s > \dim(N)/2$ and a $C_2>0$ such that for all $n,n' \in N$,
\beq{eq est D kappa}
\| (D_N \otimes D_N)^s\kappa(n,n')\| \leq C_2.
\eeq
Then
\beq{eq Tr P kappa}
\left|\int_N \tr ( g(\kappa \circ P) (g^{-1}n,n)) \, dn \right| \leq \Tr(|\kappa|) \leq C_1C_2.
\eeq
\end{lemma}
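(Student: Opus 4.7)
The strategy is to re-express the integral in \eqref{eq Tr P kappa} as an operator trace on $L^2(S|_N)$ and to bound the trace norm of the associated integral operator using the pointwise kernel bound \eqref{eq est D kappa} together with the Schatten--H\"older inequality and Weyl's law. Let $T$ denote the operator on $L^2(S|_N)$ with Schwartz kernel $\kappa$, and let $U_g$ be the unitary on $L^2(S|_N)$ induced by the action of $g$. A direct computation that keeps track of the $g$-action on sections identifies
\[
\int_N \tr\bigl(g(\kappa \circ P)(g^{-1}n, n)\bigr)\, dn = \Tr(U_g \circ T \circ P),
\]
after which the first inequality in \eqref{eq Tr P kappa} follows from standard Schatten--H\"older estimates,
\[
|\Tr(U_g T P)| \le \|U_g T P\|_1 \le \|U_g\|_\infty \|T\|_1 \|P\|_\infty = \Tr(|\kappa|),
\]
since $U_g$ is unitary and $P$ is an orthogonal projection.

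For the second inequality, I would factor $T$ through $D_N^s$. Using the self-adjointness of $D_N$ to integrate by parts in the second kernel variable, one checks that the Schwartz kernel of $D_N^s \circ T \circ D_N^s$ equals $(D_N \otimes D_N)^s \kappa$, which by \eqref{eq est D kappa} is pointwise bounded by $C_2$. Hence its Hilbert--Schmidt norm, and a fortiori its operator norm, is at most $C_2 \vol(N)$ (up to a factor depending only on $\rank(S|_N)$). Writing
\[
T = D_N^{-s} \cdot (D_N^s \circ T \circ D_N^s) \cdot D_N^{-s}
\]
and applying Schatten--H\"older with exponents $(2, \infty, 2)$ gives
\[
\|T\|_1 \le \|D_N^{-s}\|_{\HS}^2 \cdot \|D_N^s T D_N^s\|_\infty \le \|D_N^{-s}\|_{\HS}^2 \cdot \vol(N) \cdot C_2.
\]
Since $s > \dim(N)/2$, Weyl's law for the elliptic operator $D_N$ on the compact manifold $N$ yields $\|D_N^{-s}\|_{\HS}^2 = \sum_i |\lambda_i|^{-2s} < \infty$, with the leading behaviour controlled by the dimension and volume of $N$; setting $C_1 := \vol(N) \|D_N^{-s}\|_{\HS}^2$ (adjusted by $\sqrt{\rank(S|_N)}$ as above) then does the job.

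The main obstacle is that $D_N$ may have a nontrivial kernel, so $D_N^{-s}$ is not literally defined. I would address this by replacing $D_N$ throughout the factorisation by $(1 + D_N^2)^{1/2}$, which is invertible and has the same order as $D_N$; one then verifies that a pointwise bound on $(D_N \otimes D_N)^s \kappa$ still controls the Schwartz kernel of $(1 + D_N^2)^{s/2} \circ T \circ (1 + D_N^2)^{s/2}$ by a standard functional-calculus argument, splitting off the finite-dimensional $\ker(D_N)$ piece whose contribution to $\|T\|_1$ is absorbed into the final constant $C_1$.
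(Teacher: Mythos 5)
Your proof is correct and is conceptually the same strategy as the paper's — control $\kappa$ through powers of $D_N$ applied to both variables, use the hypothesis to bound the "middle" factor in $L^2$, and invoke Weyl's law so that $s > \dim(N)/2$ makes the resulting eigenvalue sum converge — but your Schatten--H\"older implementation is cleaner and, as written, more careful on the key step. The paper expands $\kappa = \sum_{j,k}\kappa^{j,k}\,e_j\otimes e_k^*$ in a $D_N$-eigenbasis, derives $|\kappa^{j,k}| \leq C_2\vol(N)|\lambda_j|^{-s}|\lambda_k|^{-s}$ from the Hilbert--Schmidt norm of $(D_N\otimes D_N)^s\kappa$, and then passes to the trace norm via the identity $\Tr(|\kappa|) = \sum_j|\kappa^{j,j}|$. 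That identity does not hold for a general trace-class operator (the trace norm is the sum of singular values, not of absolute values of diagonal entries in an arbitrary orthonormal basis), and the correct inequality $\sum_j|\kappa^{j,j}| \leq \Tr(|\kappa|)$ goes the wrong way; the safe $\ell^1$ bound $\Tr(|\kappa|)\leq\sum_{j,k}|\kappa^{j,k}|$ would instead force the stronger hypothesis $s>\dim(N)$. Your factorisation $\|T\|_1 \leq \|D_N^{-s}\|_{\HS}\,\|D_N^s T D_N^s\|_{\infty}\,\|D_N^{-s}\|_{\HS}$, with $\|D_N^{-s}\|_{\HS}^2 = \sum_j|\lambda_j|^{-2s}$ convergent exactly when $2s>\dim(N)$, reaches the same conclusion under the stated hypothesis and sidesteps this issue entirely. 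Both treatments are informal about the contribution from $\ker(D_N)$: the paper settles for ``for all but finitely many $j,k$'', and your replacement of $D_N$ by $(1+D_N^2)^{1/2}$ is only sketched — note that the hypothesis gives no bound at all on the block of $T$ mapping into or out of $\ker(D_N)$, so some extra input is needed there in both versions — and for the same reason neither proof truly establishes that $C_1$ depends only on $\dim(N)$ and $\vol(N)$ once low-lying nonzero eigenvalues of $D_N$ are allowed. Those are shared rough edges; on the core trace-norm estimate your argument is the one I would keep.
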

\begin{proof}
Let $\{e_j\}_{j=1}^{\infty}$ be a Hilbert basis of $L^2(S|_N)$ of eigensections of $D_N$, such that the absolute values of the corresponding eigenvalues $\lambda_j$ increase  (possibly non-strictly) in $j$.
For each $j$, let $e_j^* \in L^2(S|_N^*)$ be the section pointwise dual to $e_j$ with respect to the Hermitian metric.
Write
\[
\kappa = \sum_{j,k=1}^{\infty} \kappa^{j, k} e_j \otimes e_k^*,
\]
for $\kappa^{j,k} \in \C$. Then by \eqref{eq est D kappa},
\[
\sum_{j,k = 1}^{\infty} |\lambda_j|^{2s} |\lambda_k|^{2s} |\kappa^{j,k}|^{2}
	= \|(D_N \otimes D_N)^s \kappa\|_{L^2(S|_N \boxtimes S|_N^*)}^2
	\leq C_2^2 \vol(N)^2.
\]
In particular, we have for all $j,k$, such that $\lambda_j$ and $\lambda_k$ are nonzero,
\beq{eq growth kappa}
|\kappa^{j,k}| \leq C_2 \vol(N)  |\lambda_j|^{-s} |\lambda_k|^{-s}.
\eeq

By Weyl's law, there is a constant $A>0$, only depending on the volume and dimension of $N$,  such that for all $l$ for which $\lambda_{l} \not=0$,
\[
|\lambda_l| \geq A l^{1/\dim(N)}.
%
\]
Combining this with \eqref{eq growth kappa}, we find that for all but finitely many $j$ and $k$,
\beq{eq est kappa 2}
|\kappa^{j,k}| \leq C_2 \vol(N) A^{-2s} (jk)^{-s/\dim(N)}. 
\eeq

Because $\kappa$ is trace-class, so is
 $ \kappa \circ P$. Hence the left hand side of \eqref{eq Tr P kappa} equals
\[
|\Tr(g\circ \kappa \circ P)| \leq \Tr(|g\circ \kappa \circ P|) \leq  \|g\| \|P\|\Tr(|\kappa|) = \Tr(|\kappa|).
\]
Because $2s/\dim(N)>1$, the inequality \eqref{eq est kappa 2} implies that
\[
\Tr(|\kappa|) =
\sum_{j=1}^{\infty} |\kappa^{j,j}| \leq C_1C_2,
\]
for a constant $C_1$ only depending on the volume and dimension of $N$.
\end{proof}

\begin{remark}
The $g$-trace of Definition \ref{def g trace} can be generalised to proper actions by noncompact groups, see for example Subsection 2.1 of \cite{HWW}. The main reason why we assume $G$ to be compact in the current paper is the fact that the $g$-trace of $\kappa \circ P$ on the left hand side of \eqref{eq Tr P kappa} then equals $\Tr(g \circ \kappa \circ P)$, as used in the proof of Lemma \ref{lem ker proj}.
\end{remark}

\subsection{Heat kernels and boundary conditions}\label{sec heat bdry}

Let $e_{1-P}^{-tD_C^+ D_C^-}$ be the heat operator for \eqref{eq DC+DC-}. Recall that $e_{P}^{-t D_C^- D_C^+}$ is the heat operator for  \eqref{eq DC-DC+}.
\begin{lemma}\label{lem heat P 1-P}
We have
\[
e_{P}^{-tD_C^- D_C^+} D_C^- = D_C^- e_{1-P}^{-tD_C^+ D_C^-}.
\]
\end{lemma}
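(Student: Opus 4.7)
The lemma is a concrete instance of the general intertwining identity $T\, e^{-tT^*T} = e^{-tTT^*}\, T$, valid on $\dom(T)$ for any closed densely defined operator $T$ between Hilbert spaces. My plan is to realise both sides of the lemma in this form by taking $T = D^-_C$ with its given APS-type domain, and then deducing the identity from the polar decomposition (or, equivalently, from the spectral theorem for $T^*T$).

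First I would regard $T := D^-_C$ as a closed operator from $L^2(S_C^-)$ to $L^2(S_C^+)$ with dense domain $W^1_D(S_C^-; 1-P)$; closedness is immediate from the assumed invertibility of this operator. The main verification is then that the Hilbert-space adjoint $T^*$ coincides with $D^+_C$ on $W^1_D(S_C^+; P)$. For this I would apply Green's formula on $C$: using the explicit form \eqref{eq D on U} of $D$ near $N$, the pairing $(D^-_C s, t)_{L^2} - (s, D^+_C t)_{L^2}$ reduces to a boundary integral on $N$ proportional to $\int_N \langle \sigma\, s|_N, t|_N\rangle\, dn$. Because $\sigma^2 = 1$ gives $\sigma|_{S^-|_N} = \sigma_+^{-1}$, and $P^- = \sigma_+ P^+ \sigma_+^{-1}$, the condition $(1-P^-)(s|_N) = 0$ forces $\sigma\, s|_N$ to lie in $\mathrm{range}(P^+)$, which is orthogonal to $t|_N \in \mathrm{range}(1-P^+)$ (i.e.\ $P^+(t|_N) = 0$). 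Thus the boundary pairing vanishes and $D^+_C|_{W^1_D(S_C^+;P)} \subset T^*$; the reverse inclusion follows from the converse direction of the same integration by parts, using the invertibility hypothesis on \eqref{eq DC+} to control the domain.

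With the adjoint relation in hand, $T^*T$ and $TT^*$ are exactly the self-adjoint operators $D^+_C D^-_C$ on $W^2_D(S_C^-; 1-P)$ and $D^-_C D^+_C$ on $W^2_D(S_C^+; P)$ from \eqref{eq DC-DC+}--\eqref{eq DC+DC-}; in particular the heat semigroups on the two sides of the lemma are $e^{-tTT^*}$ and $e^{-tT^*T}$. Applying $T\, e^{-tT^*T} = e^{-tTT^*}\, T$ on $\dom(T)$ and extending to $L^2(S_C^-)$ by density then yields the stated identity. Alternatively, one can bypass the polar decomposition by noting that both $F(t) := e_P^{-tD^-_C D^+_C} D^-_C \eta$ and $G(t) := D^-_C e_{1-P}^{-tD^+_C D^-_C} \eta$ satisfy the heat equation $\partial_t u + D^-_C D^+_C u = 0$ with common initial datum $D^-_C \eta$; the smoothing property of $e_{1-P}^{-tD^+_C D^-_C}$ places $G(t)$ in $W^2_D(S_C^+; P)$ (the second boundary condition reduces to $(1-P)(D^+_C D^-_C e_{1-P}^{-tD^+_C D^-_C}\eta|_N) = 0$, which holds because $D^+_C D^-_C e_{1-P}^{-tD^+_C D^-_C}\eta$ again lies in $W^2_D(S_C^-; 1-P)$), so uniqueness of the heat flow yields $F = G$.

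The main obstacle is the adjoint identification in the second paragraph: one must show that the APS-type projections $P$ and $1-P$ on opposite sides of the grading are genuinely adjoint boundary conditions for $D^\pm_C$. Once the boundary pairing is controlled via the interchange property $P^- = \sigma_+ P^+ \sigma_+^{-1}$, the remainder is formal functional calculus.
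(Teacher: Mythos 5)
Your ``alternative'' argument at the end of the third paragraph is essentially the paper's own proof: the paper also characterises $e_P^{-tD_C^-D_C^+}D_C^-\eta$ and $D_C^-e_{1-P}^{-tD_C^+D_C^-}\eta$ as solutions of the same heat equation with the same initial datum and the same boundary conditions, and concludes by uniqueness. The only difference is cosmetic: the paper encodes the boundary conditions satisfied by $D_C^-e_{1-P}^{-tD_C^+D_C^-}\eta$ via $(1-P)\bigl((D_C^-)^{-1}\sigma_t|_N\bigr)=0$ and shows this is equivalent, along the flow, to $(1-P)(D_C^+\sigma_t|_N)=0$; you instead verify directly that $D_C^-e_{1-P}^{-tD_C^+D_C^-}\eta$ lies in $W^2_D(S_C^+;P)$, using that $D_C^+D_C^-e_{1-P}^{-tD_C^+D_C^-}\eta$ again lies in $W^2_D(S_C^-;1-P)$ by the smoothing property of the semigroup. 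Both verifications are correct and of comparable length, so this route is fine.

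Your primary route, via the intertwining identity $T\,e^{-tT^*T}=e^{-tTT^*}\,T$ for $T=D_C^-$, is genuinely different and would be structurally cleaner, but it has a gap at the adjoint identification. The Green's-formula computation you sketch (using $\sigma^2=1$ and $P^-=\sigma_+P^+\sigma_+^{-1}$) only yields the easy inclusion $D_C^+|_{W^1_D(S_C^+;P)}\subset (D_C^-)^*$. The reverse inclusion --- that $\dom\bigl((D_C^-)^*\bigr)$ is no larger than $W^1_D(S_C^+;P)$ --- is not ``the converse direction of the same integration by parts'': it requires elliptic regularity up to the boundary to show that an arbitrary element of the adjoint domain is locally in $W^1_D$, an argument that its trace on $N$ satisfies the $P$-condition, and global control near infinity on the complete manifold $C$. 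None of this is among the paper's standing hypotheses; the paper assumes invertibility of the first-order operators \eqref{eq DC+}--\eqref{eq DC-} and self-adjointness of the second-order compositions \eqref{eq DC-DC+}--\eqref{eq DC+DC-}, but deliberately does not assume that the first-order operators with these domains are mutually adjoint (if that were assumed, the self-adjointness hypotheses would be redundant by von Neumann's theorem). So, as written, the primary route is incomplete, and the uniqueness-of-the-heat-flow argument is the one to keep.
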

\begin{proof}
Let $s \in \Gamma^{\infty}_c(S_C^+)$. For $t>0$, we write
\[
\begin{split}
\sigma_t^P &:=  e_{P}^{-tD_C^- D_C^+} D_C^- s;\\
\sigma_t^{1-P} &:= D_C^- e_{1-P}^{-tD_C^+ D_C^-} s.
\end{split}
\]
The claim is that $\sigma^P_t = \sigma^{1-P}_t$ for all such $s$ and all $t>0$.

The family of sections $\sigma_t^P$ is the unique solution of the equations
\begin{align}
\frac{\partial \sigma_t}{\partial t} &= D_C^- D_C^+ \sigma_t; \label{eq sigma t I}\\
\sigma_0 &= D_C^- s; \label{eq sigma t II}\\
P(  \sigma_t |_N) &= 0; \label{eq sigma t III}\\
(1-P)( D_C^+  \sigma_t |_N) &= 0. \label{eq sigma t IV}
\end{align}
And the family $\sigma^{1-P}_t$ is the unique solution of \eqref{eq sigma t I}, \eqref{eq sigma t II}, \eqref{eq sigma t III} and
\beq{eq sigma t V}
(1-P)( (D_C^-)^{-1}  \sigma_t |_N) = 0.
\eeq
Here we used the assumption that the operator \eqref{eq DC+} is invertible.
We claim that \eqref{eq sigma t I}, \eqref{eq sigma t II} and \eqref{eq sigma t IV} are equivalent to \eqref{eq sigma t I}, \eqref{eq sigma t II} and \eqref{eq sigma t V}, which implies that $\sigma^P_t = \sigma^{1-P}_t$ for all $t$.

Suppose first that \eqref{eq sigma t I}, \eqref{eq sigma t II} and \eqref{eq sigma t V} hold. By \eqref{eq sigma t I}  and \eqref{eq sigma t V},
\[
\begin{split}
(1-P)( D_C^+  \sigma_t |_N)& = (1-P)( (D_C^-)^{-1} D_C^- D_C^+  \sigma_t |_N) \\
&= \frac{\partial}{\partial t}(1-P)( (D_C^-)^{-1}   \sigma_t |_N) \\
&= 0.
\end{split}
\]
so \eqref{eq sigma t IV} holds.

Next, suppose  that \eqref{eq sigma t I}, \eqref{eq sigma t II} and \eqref{eq sigma t IV} hold. Then \eqref{eq sigma t I} and \eqref{eq sigma t IV} imply that
\[
\frac{\partial}{\partial t}(1-P)( (D_C^-)^{-1}   \sigma_t |_N)  = (1-P)( D_C^+  \sigma_t |_N) = 0.
\]
And by \eqref{eq sigma t II},
\[
(1-P)( (D_C^-)^{-1}   \sigma_0 |_N) = P(s|_N) = 0.
\]
Here we used that $s$ is supported in the interior of $C$.
We conclude that $P( (D_C^-)^{-1}   \sigma_t |_N) =0$ for all $t$, which is to say that  \eqref{eq sigma t V} holds.
\end{proof}


\subsection{Dirichlet heat operators}\label{sec heat Dirichlet}

The operator $D_C^2$ on $\Gamma^{\infty}_c(S_C)$ is symmetric and nonnegative. Hence its Friedrichs extension is self-adjoint. Its domain is defined by Dirichlet boundary conditions:
\[
W^2_D(S_C; F) := \{s \in W^2_D(S_C); s|_N = 0\}.
\]
We denote the restrictions of the heat operator for this Friedrichs extension to even and odd-graded sections by $e^{-t D_C^- D_C^+}_F$ and $e^{-t D_C^+ D_C^-}_F$, respectively.

Let $p:= \dim(M)$.  
Consider the bounded operator
\begin{multline} \label{eq P zeta}
P \colon L^2(S_C) \cong L^2(S|_N) \otimes L^2( (0,\infty), e^{p\varphi}dx )\\
 \xrightarrow{P \otimes 1} \bigoplus_{\lambda \in J}  L^2(S|_N)_{\lambda} \otimes L^2( (0,\infty), e^{p\varphi}dx ) \cong L^2(S_C).
\end{multline}
The first and third maps are $G$-equivariant, unitary isomorphisms.
Here we use the fact that by the form \eqref{eq metric U} of the Riemannian metric on $C$, the Riemannian density on $C$ is $d\vol_{g_N} \otimes e^{p\varphi} dx$.

\begin{lemma}\label{lem Dirichlet}
For all $t>0$,
\begin{align}
e_P^{-t D_C^-D_C^+} P &= e_F^{-t D_C^-D_C^+} P; \label{eq Dirichlet P}\\
e_{1-P}^{-t D_C^+D_C^-} (1-P) &= e_F^{-t D_C^+D_C^-} (1-P).\label{eq Dirichlet 1-P}
\end{align}
\end{lemma}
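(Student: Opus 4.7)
The plan is to exhibit the range of $P$ as a reducing subspace for both self-adjoint extensions of $D_C^-D_C^+$ on which they coincide; the heat semigroups then agree there by the functional calculus.

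Writing $H_{P^+} := P\bigl(L^2(S_C^+)\bigr)$, the identification \eqref{eq P zeta} gives $H_{P^+} \cong \bigoplus_{\lambda \in J}L^2(S^+|_N)_\lambda \otimes L^2((0,\infty),e^{p\varphi}dx)$. First I will check that on sufficiently smooth $u\in H_{P^+}$ the APS conditions collapse to the Dirichlet condition $u|_N=0$. Indeed, $u|_N$ then lies in $\im(P^+)$, so $P^+(u|_N)=u|_N$, and the condition $P^+(u|_N)=0$ becomes $u|_N=0$. Using \eqref{eq D on U}, if $D_N^+ s_\lambda=\lambda s_\lambda$ with $\lambda\in J$ then $D_C^+(s_\lambda\otimes g)=\sigma_+ s_\lambda\otimes h$ for some $h\in C^\infty(0,\infty)$; since $\sigma_+ s_\lambda\in\im(P^-)$ by the definition of $P^-$, the remaining APS condition $(1-P^-)(D_C^+u|_N)=0$ is automatic.

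Next I will verify that $D_C^-D_C^+$ preserves $H_{P^+}$. A direct expansion of \eqref{eq D on U}, using that $\sigma$ is $x$-independent with $\sigma^2=1$, yields
\[
D_C^-D_C^+ \;=\; \bigl(f_1\partial_x+f_2\tilde D_N+f_3\bigr)\bigl(f_1\partial_x+f_2 D_N+f_3\bigr),\qquad \tilde D_N:=\sigma D_N\sigma.
\]
The Clifford structure implicit in \eqref{eq D on U} forces $\{\sigma,D_N\}=0$ (as reflected in the relation $D_N^-=-\sigma_+D_N^+\sigma_+^{-1}$ noted in the footnote to \eqref{eq def P}), so $\tilde D_N=-D_N$. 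Consequently $D_C^-D_C^+$ involves only $\partial_x$, multiplication by functions of $x$, and the operators $\pm D_N^+$ on the $N$-factor, each of which commutes with $P^+$. Hence $[P^+,D_C^-D_C^+]=0$ on a suitable core, and $H_{P^+}$ is a reducing subspace for both self-adjoint extensions.

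Combining the two steps, the APS and Friedrichs extensions restrict to the \emph{same} self-adjoint operator on $H_{P^+}$ (same dense domain, same action). By the functional calculus their heat semigroups coincide on $H_{P^+}$, and since $P$ has range $H_{P^+}$ this is exactly \eqref{eq Dirichlet P}. Equation \eqref{eq Dirichlet 1-P} follows from the parallel argument with $P$ replaced by $1-P$ and $D_C^-D_C^+$ replaced by $D_C^+D_C^-$; alternatively, one may deduce it from \eqref{eq Dirichlet P} via Lemma \ref{lem heat P 1-P} and the invertibility of $D_C^-$. The main obstacle is the anticommutation $\{\sigma,D_N\}=0$: for cylindrical ends with $\sigma=c(\partial_x)$ it is immediate from the Clifford relations, but in the general warped-product case one must trace through how the conformal factor $e^{\varphi}$ affects the Clifford module structure underlying $D$ and $D_N$.
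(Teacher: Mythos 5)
Your proof is correct and follows essentially the same strategy as the paper: both arguments show that the APS and Dirichlet domains coincide when intersected with $\im(P)$ (this is the paper's equality \eqref{eq P 1-P F}), and then use that $\im(P)$ is a reducing subspace for $D_C^-D_C^+$ to conclude that the two self-adjoint extensions, and hence their heat semigroups, agree there. The paper makes the reducing structure explicit by writing both sides of \eqref{eq Dirichlet P} as the direct sum $\bigoplus_{\lambda\in J}\Id_{L^2(S^+|_N)_\lambda}\otimes e_F^{-t(D_C^-)_\lambda(D_C^+)_\lambda}$, whereas you argue more abstractly; the content is the same. Your closing remark is well-placed: for $D_C^-D_C^+$ to preserve $\im(P)$ one does need $\sigma_- D_N^-\sigma_+$ to preserve the eigenspaces $L^2(S^+|_N)_\lambda$ (for instance $\{\sigma,D_N\}=0$, as in the footnote to \eqref{eq def P}); the paper's proof uses this silently when it applies the formula $D_C^{\mp}|_{L^2(S^{\mp}|_N)_\lambda\otimes\cdots}=\sigma\otimes(D_C^{\mp})_\lambda$ to the image of $D_C^{\pm}$, so you have correctly identified a hypothesis that the paper leaves implicit.
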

\begin{proof}
We have
\beq{eq P 1-P F}
\begin{split}
\im(P) \cap W^2_D(S_C^+; P) &= \im(P\zeta) \cap W^2_D(S_C^+; F);\\
\im(1-P) \cap W^2_D(S_C^-; 1-P) &= \im( (1-P)\zeta) \cap W^2_D(S_C^-; F).
\end{split}
\eeq
So the operator \eqref{eq DC-DC+} equals the Friedrichs extension of $D_C^-D_C^+$ on the image of $P$, and the operator \eqref{eq DC+DC-} equals the Friedrichs extension of $D_C^+D_C^-$ on the image of $1-P$. 

For every eigenvalue $\lambda$ of $D_N^{\pm}$, the operator $D_C^{\pm}$ restricted to 
 $L^2(S^{\pm}|_N)_{\lambda} \otimes L^2( (0,\infty), e^{p\varphi}dx )$ equals $\sigma \otimes  (D_C^{\pm})_{\lambda}$, with
 \[
 (D_C^{\pm})_{\lambda} := f_1 \frac{\partial}{\partial x} + \lambda f_2 + f_3.
 \]
 Let $e_{F}^{-t (D_C^{\pm})_{\lambda}(D_C^{\mp})_{\lambda} }$ be the corresponding heat operator on $ L^2( (0,\infty), e^{p\varphi}dx )$, with Dirichlet boundary conditions at $0 \in [0, \infty)$. Then because of \eqref{eq P 1-P F} and the fact that $\sigma^2 = 1$, both sides of \eqref{eq Dirichlet P} equal
 \[
 \bigoplus_{\lambda \in J} \Id_{L^2(S^{+}|_N)_{\lambda}} \otimes e_{F}^{-t (D_C^{-})_{\lambda}(D_C^{+})_{\lambda} },
 \]
while  both sides of \eqref{eq Dirichlet 1-P} equal
 \[
 \bigoplus_{\lambda \not\in J}   \Id_{L^2(S^{-}|_N)_{\lambda}} \otimes e_{F}^{-t (D_C^{+})_{\lambda}(D_C^{-})_{\lambda} }.
  \]

%
%
\end{proof}

Lemmas \ref{lem heat P 1-P} and \ref{lem Dirichlet} imply the following equality, which is used in the proofs of Propositions \ref{prop lem 5.5} and \ref{prop conv cusp contr}. This equality can also be used to obtain a more computable expression for \ref{eq def At}, which does not include the APS-type heat operator $e_P^{-t D_C^- D_C^+}$. This is used in \cite{HW21b}.
\begin{proposition}\label{prop eF eP}
We have
\[
e_P^{-t D_C^- D_C^+}D_C^- = e_F^{-t D_C^- D_C^+}D_C^-P + D_C^-e_F^{-t D_C^+ D_C^-}(1-P).
\]
\end{proposition}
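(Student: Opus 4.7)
The plan is to apply Lemma \ref{lem heat P 1-P} to move $D_C^-$ across the heat operator on the left--hand side, then split using the resolution $1 = P + (1-P)$ on $L^2(S_C^-)$, and finally identify each piece with the corresponding term on the right--hand side via Lemma \ref{lem Dirichlet}.

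Concretely, I would start by applying Lemma \ref{lem heat P 1-P} and then inserting $1 = P + (1-P)$ on the right of the new expression:
\[
e_P^{-tD_C^- D_C^+}D_C^- \;=\; D_C^- e_{1-P}^{-tD_C^+ D_C^-} \;=\; D_C^- e_{1-P}^{-tD_C^+ D_C^-}P \;+\; D_C^- e_{1-P}^{-tD_C^+ D_C^-}(1-P).
\]
For the $(1-P)$-piece, identity \eqref{eq Dirichlet 1-P} of Lemma \ref{lem Dirichlet} converts $e_{1-P}^{-tD_C^+ D_C^-}(1-P)$ directly into $e_F^{-tD_C^+ D_C^-}(1-P)$, producing the second term on the right--hand side.

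For the $P$-piece, my plan is to reverse Lemma \ref{lem heat P 1-P} once more to rewrite $D_C^- e_{1-P}^{-tD_C^+ D_C^-}P = e_P^{-tD_C^- D_C^+}D_C^- P$, and then establish the equality $e_P^{-tD_C^- D_C^+}D_C^- P = e_F^{-tD_C^- D_C^+}D_C^- P$ using \eqref{eq Dirichlet P} of Lemma \ref{lem Dirichlet}. To apply \eqref{eq Dirichlet P} here, one has to insert a $P$ between the heat operator and $D_C^- P$, so the crucial intermediate claim is
\[
P D_C^- P = D_C^- P,
\]
i.e.\ that $D_C^-$ maps $\im(P) \cap L^2(S_C^-)$ into $\im(P) \cap L^2(S_C^+)$. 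Granting this, the chain
\[
e_P^{-tD_C^- D_C^+} D_C^- P = e_P^{-tD_C^- D_C^+} P(D_C^- P) = e_F^{-tD_C^- D_C^+} P(D_C^- P) = e_F^{-tD_C^- D_C^+} D_C^- P
\]
finishes the argument.

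The main step that needs care is the claim $PD_C^- P = D_C^- P$. I would verify it directly from the eigenspace decomposition already used in the proof of Lemma \ref{lem Dirichlet}: on each summand $L^2(S^-|_N)_{\lambda} \otimes L^2((0,\infty), e^{p\varphi}dx)$ with $\lambda \in J$, the operator $D_C^-$ acts as $\sigma \otimes (D_C^-)_{\lambda}$ and lands inside $L^2(S^+|_N)_{\lambda} \otimes L^2((0,\infty), e^{p\varphi}dx)$, which is a summand of $\im(P) \subset L^2(S_C^+)$. Summing over $\lambda \in J$ gives the required preservation of $\im(P)$, after which the proposition follows purely formally from Lemmas \ref{lem heat P 1-P} and \ref{lem Dirichlet}.
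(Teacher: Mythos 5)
Your proof is correct and takes essentially the approach the paper leaves implicit when it states that Proposition~\ref{prop eF eP} follows from Lemmas~\ref{lem heat P 1-P} and~\ref{lem Dirichlet}: split the identity by $P + (1-P)$, dispatch the $(1-P)$-piece with Lemma~\ref{lem heat P 1-P} and \eqref{eq Dirichlet 1-P}, and dispatch the $P$-piece with \eqref{eq Dirichlet P} after observing the intertwining relation $PD_C^-P = D_C^-P$, which is exactly the eigenspace-preservation fact already exploited in the proof of Lemma~\ref{lem Dirichlet}. The only stylistic remark is that applying Lemma~\ref{lem heat P 1-P} globally and then reversing it on the $P$-piece is slightly roundabout — splitting $D_C^- = D_C^-P + D_C^-(1-P)$ first and applying Lemma~\ref{lem heat P 1-P} only to the $(1-P)$-summand saves a step — but the mathematics is identical and the key intermediate claim is correctly identified and justified.
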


\subsection{Operators on a double}

Let $\tilde M$ be a compact Riemannian manifold, on which $G$ acts isometrically. Let  $\tilde S \to \tilde M$ be a $G$-equivariant Clifford module, and  $\tilde D$ a $G$-equivariant Dirac operator acting on sections of $\tilde S$.
Let $U\subset C$ be the $G$-invariant open subset that is the image of $N \times (0,2)$ under the isometry \eqref{eq isom U}. 
Let $V \subset U \cong N \times (0,\infty)$ be the image of $N \times (0,1)$.
Suppose that
$Z \cup V$
 embeds equivariantly and  isometrically into $\tilde M$, in such a way that $S|_{Z \cup V} = \tilde S|_{Z \cup V}$ and $D|_{Z \cup V} = \tilde D|_{Z \cup V}$. Such objects can be constructed as follows. Consider the manifold $Z \cup U$ with a Riemannian metric that equals the metric on $M$ when restricted to $Z \cup V$, and that equals the product metric $g_N + dx^2$ on $N \times (3/2, 2) \subset U$. Then take the double of this manifold along $N \times (3/2, 2) \subset U$, and extend $S$ and $D$ to it.

The operator $\tilde D$ is self-adjoint and odd-graded. So we have the heat operators  $e^{-t\tilde D^{\mp} \tilde D^{\pm}}$. Let $\tilde \kappa_t^{\pm}$ be the Schwartz kernel of $e^{-t\tilde D^{\mp} \tilde D^{\pm}}$, and
let $\kappa_t^{F, \pm}$ be the Schwartz kernel of $e_F^{-tD_C^{\mp}D_C^{\pm}}$. Consider the kernel
\beq{eq def Kt}
K_t^{\pm} := \kappa_t^{F, \pm}|_{U \times U} - \tilde \kappa_t^{\pm}|_{U \times U}.
\eeq

The principle of not feeling the boundary states that away from a boundary where Dirichlet boundary conditions are imposed,    Dirichlet heat kernels approximate heat kernels defined without boundary conditions in the short time limit. We will use the following version of this principle.
\begin{theorem}[Principle of not feeling the boundary] \label{thm not feel bdry}
For all $j, k,l \in \Z_{\geq 0}$,
\beq{eq not feel bdry}
\begin{split}
\| (D_N^k \otimes D_N^l)   (K_t^+ \circ D^-)\| &= \cO(t^j) \quad \text{and}\\
\| (D_N^k \otimes D_N^l)   (D^- \circ K_t^+)\| & = \cO(t^j),
\end{split}
\eeq
as $t\downarrow 0$, uniformly in compact subsets of $U \times U$.
Here $K_t^+ \circ D^-$ is the kernel of the composition of $K_t^+$ with $D^-$, and $D^- \circ K_t^+$ is defined similarly. The operators $D_N^k \otimes D_N^l$ are applied in the $N$-direction in $U = N \times (0,2)$.
\end{theorem}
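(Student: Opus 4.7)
The plan is to invoke finite propagation speed for the wave operators associated to $D_C$ and $\tilde D$, combined with the spectral representation of the heat operator as a Gaussian-weighted integral of wave operators. The essential point is that $D_C$ (with its Friedrichs extension, which by Lemma~\ref{lem Dirichlet} governs the relevant heat operators) and $\tilde D$ coincide on $Z\cup V$; hence wave fronts emanating from a compact subset of $V$ remain inside $Z\cup V$ for short times and cannot distinguish the two operators.

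First, for any nonnegative self-adjoint operator $A$ we would use
\[
e^{-tA}=\frac{1}{\sqrt{4\pi t}}\int_{-\infty}^{\infty}e^{-s^2/(4t)}\cos(sA^{1/2})\,ds,
\]
applied both to $A=D_C^-D_C^+$ on $C$ and to $A=\tilde D^-\tilde D^+$ on $\tilde M$. Fix a compact $K\subset U\times U$ and take $\delta_0>0$ strictly less than the distance from the two projections of $K$ to the complement of $V$, i.e.\ to $N$ and to $N\times\{1\}$. Finite propagation speed then implies that for $|s|<\delta_0$ the Schwartz kernels of $\cos(s(D_C^-D_C^+)^{1/2})$ and $\cos(s(\tilde D^-\tilde D^+)^{1/2})$ agree on $K$, because the waves emitted at any point of $\pi_2(K)$ remain inside $Z\cup V$ for such $s$, where the two operators act identically. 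Subtracting the two spectral representations cancels the $|s|<\delta_0$ portion of the integral, leaving a remainder controlled by the Gaussian tail $\int_{|s|\geq\delta_0}e^{-s^2/(4t)}\,ds=\cO(t^j)$ for every $j\geq 0$, multiplied by the uniform operator-norm bound $\|\cos(sA^{1/2})\|\leq 1$.

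Second, to upgrade this operator-level estimate to uniform pointwise control on $K$ in the presence of the tangential derivatives $D_N^k\otimes D_N^l$ and the composition with $D^-$, I would combine elliptic regularity in the interior of $U$ with Sobolev embedding. Using the product form~\eqref{eq D on U} of $D$ on $U$, each tangential factor $D_N$, and the first-order operator $D^-$, commutes with $\cos(sA^{1/2})$ up to lower-order terms, and each commutation brings at most a polynomial factor of $s$ into the spectral integral, which is still absorbed by the Gaussian. The resulting $L^2$-bound on sufficiently high Sobolev powers of $(D_N^k\otimes D_N^l)(K_t^+\circ D^-)$ translates, via $H^m\hookrightarrow C^0$ for $m$ larger than half the local dimension, into the claimed uniform $\cO(t^j)$ decay on $K$. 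The argument for $D^-\circ K_t^+$ is symmetric, with $D^-$ acting in the first variable.

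The main obstacle will be justifying finite propagation speed for $\cos(s(D_C^-D_C^+)^{1/2})$ in the presence of the Friedrichs boundary conditions at $N$, and carrying the tangential commutators $D_N^k\otimes D_N^l$ through the wave operator without losing the uniform estimate. The first point is a standard consequence of an energy-integral argument for the hyperbolic equation on a manifold with boundary applied to the self-adjoint Friedrichs extension; the second is routine but delicate bookkeeping, made tractable by the explicit structure~\eqref{eq D on U} of $D$ on $U$ and the fact that $D_N$ preserves the spectral subspaces used to define $P$.
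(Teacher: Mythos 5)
Your proposal is a valid alternative proof, following the Cheeger--Gromov--Taylor finite propagation speed method rather than the paper's route. The paper instead \emph{cites} existing principle-of-not-feeling-the-boundary estimates (Hsu, Dodziuk, L\"uck) to get $\|\kappa_t^F-\kappa_t\|\leq ae^{-b/t}$ on compacts of $U$ (comparing each of $\kappa_t^F$ and $\tilde\kappa_t$ to the heat kernel $\kappa_t$ of $e^{-tD^2}$ on the complete manifold $M$), then observes that the two kernels therefore share the same local heat asymptotic expansion, and --- since asymptotic expansions commute with differentiation --- so do $D_N^k\otimes D_N^l$ applied to them and their composites with $D^-$; the triangle inequality finishes. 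Your approach is more self-contained, compares $\kappa_t^F$ and $\tilde\kappa_t$ directly, and avoids importing the external not-feeling-the-boundary theorems, at the cost of the bookkeeping you yourself flag.

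Two points in your sketch need tightening. First, the choice of $\delta_0$ should ensure the projections of $K$ are at distance $>\delta_0$ from the \emph{entire} region where $D_C$, $\tilde D$ and $D$ can fail to coincide or where one of the domains ends: the double is built so that $D|_{Z\cup V}=\tilde D|_{Z\cup V}$, but the metric on $\tilde M$ may already differ from that on $M$ on $N\times[1,2)$, so $K$ must sit inside $V\times V$ with a margin --- your ``distance to $N$ and to $N\times\{1\}$'' is the right condition, but it also implicitly restricts the uniformity to compacts of $V\times V$ rather than of $U\times U$; this is in fact all that the rest of the paper uses. Second, the claim that ``each tangential factor $D_N$ ... commutes with $\cos(sA^{1/2})$ up to lower-order terms'' is too loose: $D_N$ does not commute with $D_C$ on the warped product unless $f_2$ is constant, and tracking the commutators through the wave group is delicate. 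The cleaner implementation of your own plan is to insert powers of the full Dirac operator $D$ (or $\tilde D$) into the spectral integral --- these genuinely act as polynomial spectral multipliers, so the Gaussian tail still dominates --- obtaining a local $L^2$-estimate on $D^a\,K_t^+\,D^b$ for all $a,b$, and only then converting to the stated $D_N^k\otimes D_N^l$ derivatives of the kernel composed with $D^-$ via interior elliptic regularity and Sobolev embedding on the compact $K$. With that adjustment your argument goes through.
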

\begin{proof}
We first compare  the heat kernel $\kappa_t^F$ of  $e_F^{-tD_C^{2}}$ with the heat kernel $\kappa_t$ of $e^{-tD^2}$ on $M$.
By the principle of not feeling the boundary for the operator $D$, there are constants $a,b>0$ such that
 for all 
and all $t \in (0,1]$,
\[
\|\kappa_t^F  - \kappa_t\| \leq a e^{-b/t},
\]
uniformly in compact subsets of $U$.
See Theorem 1.5 in \cite{Hsu93}, Theorem 2.1 in \cite{Dodziuk97} or Theorem 2.26 in \cite{Lueck99}. 
Hence, in particular, the two heat kernels have the same asymptotic expansion in $t$ on $U$.
We can also show this directly, by noting that the terms in these asymptotic expansions are determined by local geometry (see for example Theorem 7.15(i) in \cite{Roe98}), and are therefore equal for $\kappa_t^F$ and $\kappa_t$ on $U$, analogously to the proof of  Lemma 5.4 in \cite{HWW}.

 Because asymptotic expansions commute with derivatives (see for example Theorem 7.15(ii) in \cite{Roe98}), it follows that for all $j,k,l$,
\[
\begin{split}
\| (D_N^k \otimes D_N^l)   ((\kappa_t^F - \kappa_t)\circ D^-)\| & =\cO(t^j) \quad \text{and}\\
\| (D_N^k \otimes D_N^l)   (D^- \circ (\kappa_t^F - \kappa_t))\| &= \cO(t^j),
\end{split}
\]
uniformly in compact subsets of $U$.

A similar estimate holds with $\kappa_t^F$ replaced by $\tilde \kappa_t$, analogously to Lemma 5.4 in \cite{HWW}. Hence the claim follows by the triangle inequality.
\end{proof}

We will only need the weaker version of Theorem \ref{thm not feel bdry} that the left hand sides of \eqref{eq not feel bdry} are bounded as $t \downarrow 0$.

%

\subsection{Parametrices}\label{sec parametrices}

Recall that
$V \cong N \times (0,1) \subset U$.
Let $\varphi_1, \varphi_2, \psi \in C^{\infty}(M)^G$ be $G$-invariant functions with values in $[0,1]$,  such that
\begin{itemize}
\item $\psi|_U$, $\varphi_1|_U$ and $\varphi_2|_U$ are constant in the $N$-component of $U = N \times (0,2)$;
\item $\varphi_1 \psi = \psi$ and $\varphi_2(1- \psi) = 1- \psi$ ;
\item $\psi|_{Z} = 1$ and $\psi|_{M \setminus V} = 0$;
\item $\varphi_1|_{M \setminus V} = 0$ and $\varphi_2|_{Z} = 0$.
\end{itemize}
The second and third conditions imply that $\varphi_1|_{Z} = 1$ and $\varphi_2|_{M \setminus V} = 1$.

Consider the parametrix
\beq{eq def Q tilde}
\tilde Q := \frac{1-e^{-t\tilde D^- \tilde D^+}
}{\tilde D^- \tilde D^+}  \tilde D^-
\eeq
of $\tilde D^+$. Here the first factor is defined as the functional calculus construction $f(\tilde D^- \tilde D^+)$ for the bounded continuous function $f(y) = \frac{1-e^{-ty}}{y}$ on $[0,\infty)$. So it is not assumed that $\tilde D^- \tilde D^+$ is invertible.

Let $(D_C^+)^{-1}$ be the inverse of the operator \eqref{eq DC+}. Consider the operators
\[
\begin{split}
R&:= \varphi_1 \tilde Q \psi + \varphi_2 (D_C^+)^{-1} (1-\psi);\\
R'&:= \psi \tilde Q \varphi_1 + (1-\psi) (D_C^+)^{-1} \varphi_2.
\end{split}
\]
Consider the remainder terms
\[
\begin{split}
\tilde S_+&:= 1-\tilde Q \tilde D^+ = e^{-t \tilde D^- \tilde D^+};\\
\tilde S_-&:= 1- \tilde D^+\tilde Q = e^{-t \tilde D^+ \tilde D^-},
\end{split}
\]
and
\beq{eq Spm}
\begin{split}
S_+&:= 1-R  D^+ ;\\
S'_+&:= 1-R'  D^+ ;\\
S_-&:= 1-  D^+ R.
\end{split}
\eeq
We have the following generalisation of Lemma 5.1 in \cite{HWW}.
\begin{lemma}\label{lem Sj}
With $f_1$ as in \eqref{eq D on U},
\[
\begin{split}
S_+ &= \varphi_1 \tilde S_{+} \psi + \varphi_1 \tilde Q \sigma \psi' f_1 - \varphi_2 (D_C^+)^{-1} \sigma \psi' f_1;\\
S_+' &= \psi \tilde S_{+} \varphi_1 + \psi \tilde Q \sigma \varphi_1' f_1+ (1-\psi) (D_C^+)^{-1} \sigma \varphi_2' f_1;\\
S_- &= \varphi_1 \tilde S_{-} \psi - f_1 \varphi_1' \sigma \tilde Q \psi - f_1 \varphi_2' \sigma (D_C^+)^{-1} (1-\psi).
\end{split}
\]
\end{lemma}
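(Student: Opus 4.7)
The plan is to prove each of the three formulas by direct expansion of the definitions of $S_+$, $S_+'$, and $S_-$ in \eqref{eq Spm}, followed by commuting $D^+$ through the cutoff functions $\psi$, $\varphi_1$, $\varphi_2$. The only non-bookkeeping ingredients are one commutator identity on $U$, the defining identities of the parametrix $\tilde Q$, and the definition of $(D_C^+)^{-1}$.

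Since $\psi$, $\varphi_1$, $\varphi_2$ depend only on $x$ on $U \cong N \times (0,2)$, and the Clifford action of $dx$ is $f_1 \sigma$ by \eqref{eq D on U}, one has
\[
[D^+, \chi] = f_1 \chi' \sigma \qquad \text{for } \chi \in \{\psi, \varphi_1, \varphi_2\}.
\]
Directly from \eqref{eq def Q tilde}, $\tilde Q \tilde D^+ = 1 - \tilde S_+$, and $\tilde D^+ \tilde Q = 1 - \tilde S_-$ follows from the intertwining $\tilde D^+ f(\tilde D^- \tilde D^+) = f(\tilde D^+ \tilde D^-) \tilde D^+$. The invertibility of \eqref{eq DC+} gives $(D_C^+)^{-1} D_C^+ = \mathrm{id}$ on $W^1_D(S_C^+; P)$ and $D_C^+ (D_C^+)^{-1} = \mathrm{id}$ on $L^2(S_C^-)$.

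For $S_+ = 1 - R D^+$, I would commute $D^+$ rightward past $\psi$ and $1-\psi$, writing $\psi D^+ = D^+ \psi - f_1 \psi' \sigma$. Since $\psi s$ is supported in $Z \cup V$, the operators $D^+$ and $\tilde D^+$ agree on it, so
\[
\varphi_1 \tilde Q \tilde D^+ \psi = \varphi_1 (1 - \tilde S_+)\psi = \psi - \varphi_1 \tilde S_+ \psi,
\]
using $\varphi_1 \psi = \psi$. For the second summand, $(1-\psi) s$ vanishes in a neighbourhood of $N$, hence lies in $W^1_D(S_C^+; P)$, so $(D_C^+)^{-1} D^+(1-\psi)s = (1-\psi)s$; combined with $\varphi_2(1-\psi) = 1-\psi$ this yields the constant term $1-\psi$. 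The constants $\psi$ and $1-\psi$ sum to $1$ and cancel the leading $1$, and the remaining three terms are exactly the claimed expression. The computation for $S_+'$ is parallel: one commutes $D^+$ past $\varphi_1$ and $\varphi_2$ instead of $\psi$, using the observation that $\varphi_2|_N = 0$ (by continuity from $\varphi_2|_Z = 0$) to ensure $\varphi_2 s \in W^1_D(S_C^+; P)$.

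For $S_- = 1 - D^+ R$, one commutes $D^+$ leftward: $D^+ \varphi_j = \varphi_j D^+ + f_1 \varphi_j' \sigma$. Then
\[
\varphi_1 D^+ \tilde Q \psi = \varphi_1 \tilde D^+ \tilde Q \psi = \varphi_1(1-\tilde S_-)\psi = \psi - \varphi_1 \tilde S_- \psi,
\]
while $\varphi_2 D^+ (D_C^+)^{-1}(1-\psi) = \varphi_2(1-\psi) = 1-\psi$, and the constant terms again cancel the leading $1$. The main obstacle is not the algebra but careful bookkeeping: each composition involves operators on three different manifolds ($\tilde M$, $M$, and $C$), and each use of $\tilde Q \tilde D^+ = 1 - \tilde S_+$ or $(D_C^+)^{-1} D_C^+ = \mathrm{id}$ must be justified by verifying that the section in question lies in the relevant APS-type domain or is supported in $Z \cup V$. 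Once the support/domain facts are isolated up front, the proof reduces to one line of manipulation per formula.
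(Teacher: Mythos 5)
Your proposal is correct and takes essentially the same approach as the paper: the paper's proof is a brief remark that this is a direct computation based on the commutator identity $[D,\varphi]=\varphi' f_1\sigma$ and a reference to Lemma~5.1 of \cite{HWW}, and you have supplied exactly the details that reference stands in for (commuting $D^+$ past the cutoffs, invoking $\tilde Q\tilde D^+ = 1-\tilde S_+$, $\tilde D^+\tilde Q = 1-\tilde S_-$, and $(D_C^+)^{-1}D_C^+=\mathrm{id}$ on $W^1_D(S_C^+;P)$, with the support/domain checks $\varphi_1\psi=\psi$, $\varphi_2(1-\psi)=1-\psi$, $(1-\psi)|_N=0$, $\varphi_2|_N=0$).
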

\begin{proof}
This is a direct computation, based on the fact that $\tilde D$ and $D_C$ both equal \eqref{eq D on U} in the region where the functions $\varphi_j$ and $\psi_j$ are not constant. The difference with Lemma 5.1 in \cite{HWW} and its proof is that now
\[
[D,\varphi] = \varphi' f_1 \sigma
\]
for a smooth function $\varphi \in C^{\infty}(0,\infty) \hookrightarrow C^{\infty}(U)$, whereas in Lemma 5.1 in \cite{HWW}, the function $f_1$ equals one.
\end{proof}

The operators $S_{\pm}$ and $S_{+}'$ have smooth kernels by the same argument as the proof of Lemma 5.2 in \cite{HWW}.

The inverse $(D_C^+)^{-1}$ is a parametrix of $D_C^+$, but we will also use the parametrix
\[
Q_C := \frac{1-e_P^{-t D^-_C  D^+_C}
}{D_C^- D_C^+}  D_C^-
\]
of $D_C^+$, defined via functional calculus like \eqref{eq def Q tilde}.

\section{The contribution from infinity} \label{sec proof index thm}

After the preparations in Section \ref{sec smooth kernels}, we prove Theorem \ref{thm index general} in this section, by showing how the contribution from infinity $A_{g}(D_C, a')$ appears. We first obtain a version of Theorem \ref{thm index general} in which the contributions to the index from near $Z$ and from infinity are divided by a smooth cutoff function, this is Theorem \ref{thm index general psi}. Then we deduce Theorem \ref{thm index general} by letting this smooth function approach a step function in a suitable way.

The proof of  Theorem \ref{thm index general psi}  is a generalisation of the arguments in Section 5 of \cite{HWW}.
We first investigate convergence of a version of $A_{g}(D_C, a')$ that involves a smooth cutoff function, and then show how the results from Section \ref{sec smooth kernels} can be combined with arguments from \cite{HWW} to obtain  Theorem \ref{thm index general psi}.


\subsection{The $g$-trace and the $g$-index}

As in \cite{HWW, HW2}, we will use an extension of the operator trace to operators that are not necessarily trace-class.


\begin{definition}\label{def g trace}
Let $T$ be an operator on $\Gamma^{\infty}(S)$ with a smooth kernel $\kappa$.
Then $T$ is \emph{$g$-trace class} if the integral
\[
\int_M \tr \bigl( g\kappa(g^{-1}m,m) \bigr)\, dm
\]
converges absolutely. In that case, the value of this integral is the \emph{$g$-trace} of $T$, denoted by $\Tr_g(T)$.
\end{definition}
If $T$ is trace class and has a smooth kernel, then it is $g$-trace class, and $\Tr_g(T) = \Tr(g\circ T)$.

The $g$-trace has a trace property, see Lemma 3.2 in \cite{HWW}.
\begin{lemma}\label{lem Trg trace}
Let $S,T$ be $G$-equivariant linear operators on  $\Gamma^{\infty}(S)$. Suppose that
\begin{itemize}
\item $S$ has a distributional  kernel;
\item $T$ has a smooth kernel;
\item $ST$ and $TS$ are $g$-trace-class.
\end{itemize}
Then $\Tr_g(ST) = \Tr_g(TS)$.
\end{lemma}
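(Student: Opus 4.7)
The plan is to reduce the identity $\Tr_g(ST)=\Tr_g(TS)$ to a direct manipulation of iterated kernel integrals, relying only on a $G$-invariant change of variables on $M$, the $G$-equivariance of $S$, and the cyclic property of the trace.

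First I would write the kernels of the two composites informally as
\[
\kappa_{ST}(x,y) = \int_M \kappa_S(x,z)\kappa_T(z,y)\,dz, \qquad \kappa_{TS}(x,y) = \int_M \kappa_T(x,z)\kappa_S(z,y)\,dz,
\]
interpreted as the pairing of the distributional kernel $\kappa_S$ against the smooth kernel $\kappa_T$ in one variable. Inserting these into Definition \ref{def g trace} expresses $\Tr_g(ST)$ and $\Tr_g(TS)$ as iterated integrals over $M \times M$ of traces of products of three endomorphism-valued factors.

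Next, in the expression for $\Tr_g(TS)$, I would change variables on the outer integration, replacing $m$ by $gm'$ (with $z$ left alone, then swapping the order of integration). This substitution preserves the density $dm$ because $G$ acts isometrically, and it turns the factor $\kappa_S(z,m)$ into $\kappa_S(z, gm')$. The $G$-equivariance of $S$ now gives the covariance $\kappa_S(z,gm') = g\,\kappa_S(g^{-1}z, m')\,g^{-1}$. Substituting this, then using the cyclic property of the fiberwise trace to cancel the trailing $g^{-1}$ against the leading $g$ of the $g$-trace and to move $\kappa_T(z,m')$ into the right position, produces, after renaming dummy variables, exactly the iterated integral representing $\Tr_g(ST)$.

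The main technical obstacle is justifying these kernel manipulations when $\kappa_S$ is only a distribution: the products $\kappa_S \kappa_T$ and $\kappa_T \kappa_S$ must be interpreted through the pairing of $\kappa_S$ against the smooth section obtained from $\kappa_T$, and Fubini must be applied to swap the orders of integration. Both steps are supported by the hypotheses: the smoothness of $\kappa_T$ makes the inner pairings pointwise well-defined and smooth, while the assumption that $ST$ and $TS$ are $g$-trace class ensures absolute integrability of the resulting iterated integrals so that Fubini and the change of variables are legitimate. Equivariance of $T$ is not used explicitly in the manipulation, but is needed to keep the whole setup consistent with the $G$-actions.
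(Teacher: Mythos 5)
Your overall strategy -- expressing both $g$-traces as iterated kernel integrals, changing variables in the outer integration using the fact that $G$ acts isometrically, invoking equivariance of the kernel of $S$ and cyclicity of the fibrewise trace, and then swapping the order of iteration -- is the standard and expected route, and matches what the cited source (Lemma~3.2 of \cite{HWW}) does. The kernel manipulations themselves are correct, including your observation that only the equivariance of $S$ is explicitly used.

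However, your justification of the Fubini step is not correct, and this is the one place where the argument genuinely requires care. You write that the hypothesis that $ST$ and $TS$ are $g$-trace class ``ensures absolute integrability of the resulting iterated integrals.'' It does not. Being $g$-trace class means that the \emph{single} outer integrals $\int_M |\tr(g\,\kappa_{ST}(g^{-1}m,m))|\,dm$ and $\int_M|\tr(g\,\kappa_{TS}(g^{-1}m,m))|\,dm$ converge; it says nothing about the absolute convergence of the iterated \emph{double} integral over $M\times M$, which is what Fubini needs. Here the inner integral is moreover a distribution--function pairing rather than an honest Lebesgue integral, so the obstruction is not merely cosmetic. One can build examples (e.g.\ $M=\R$, $S=d/dx$ with kernel $\delta'(x-y)$, and $T$ a smooth kernel operator with $\kappa_T(x,y)=e^{-(x-y)^2}\arctan(x+y)$) where both traces are finite yet differ by a boundary term; the difference $\Tr(ST)-\Tr(TS)=\int_\R \frac{d}{dx}\bigl[\kappa_T(x,x)\bigr]\,dx$ need not vanish when $\kappa_T(x,x)$ has distinct limits at $\pm\infty$. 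So the swap of integration order is exactly where a proof can fail, and what actually makes the lemma usable is an extra support or decay condition (for instance that one of the two kernels is compactly supported, or that the underlying manifold is compact, or more precise hypotheses as in the cited reference). Indeed, in the paper every invocation of this lemma is for operators on a compact manifold $\tilde M$ or with kernels supported in a compact region, which is why no trouble arises there. Your proof should either add such a hypothesis explicitly or verify that one of the kernels has the decay needed for the double integral to converge absolutely; as written, the Fubini step is unjustified.
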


\begin{definition}
Let $F$ be a differential operator on $\Gamma^{\infty}(S)$. Suppose that $F$ is odd with respect to the grading on $S$, and let $F^+\colon \Gamma^{\infty}(S^+) \to  \Gamma^{\infty}(S^-)$ be its restriction to even-graded sections. Then $F$ is \emph{$g$-Fredholm} if
$F^+$ has a parametrix $Q$ such that $1_{S_+} - QF^+$ and $1_{S_-} - F^+ Q$ are $g$-trace class. In that case, the \emph{$g$-index} of $F$ is
\[
\ind_g(F) := \Tr_g(1_{S_+} - QF^+) - \Tr_g(1_{S_-} - F^+ Q).
\]
\end{definition}

The  properties of the $g$-index in the following lemma are special cases of  Lemmas 2.5 and 2.9 in \cite{HWW}.
\begin{lemma}\label{lem inde ind}
The $g$-index of a $g$-Fredholm operator is independent of the choice of the parametrix $Q$. If $F$ has finite-dimensional kernel and is $g$-Fredholm, then
\[
\ind_g(F) = \ind_G(F)(g).
\]
\end{lemma}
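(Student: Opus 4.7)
The plan is to prove the two claims in sequence.

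First, for parametrix independence, let $Q_0$ and $Q_1$ both satisfy the $g$-Fredholm conditions, and set $R := Q_1 - Q_0$, $S_+^{(i)} := 1 - Q_i F^+$, and $S_-^{(i)} := 1 - F^+ Q_i$. Expanding the definition of $\ind_g$ gives
\[
\ind_g(F, Q_0) - \ind_g(F, Q_1) = \Tr_g(R F^+) - \Tr_g(F^+ R),
\]
and both $R F^+ = S_+^{(0)} - S_+^{(1)}$ and $F^+ R = S_-^{(0)} - S_-^{(1)}$ are smooth-kernel operators, being differences of such. The main step is to show that $R$ itself has a smooth kernel. Inserting $1 = F^+ Q_0 + S_-^{(0)}$ on the right of $R$ gives
\[
R = (R F^+) Q_0 + R S_-^{(0)},
\]
and each summand has a smooth kernel: the smooth-kernel operator $R F^+$ is composed with $Q_0$, which preserves smooth sections because it is a parametrix of the elliptic operator $F^+$; and the smooth-kernel operator $S_-^{(0)}$ is composed with $R$, which preserves smoothness for the same reason. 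With $R$ now known to have a smooth kernel, Lemma \ref{lem Trg trace} applies directly, taking $T := R$ (smooth kernel) and $S := F^+$ (distributional kernel), and yields $\Tr_g(R F^+) = \Tr_g(F^+ R)$, as required.

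Second, for the identification with $\ind_G(F)(g)$, suppose $\ker F$ is finite-dimensional. By elliptic regularity $\ker F^\pm \subset \Gamma^\infty(S^\pm)$, so the orthogonal projections $P^\pm \colon L^2(S^\pm) \to \ker F^\pm$ are finite-rank smoothing operators. Define the pseudo-inverse $Q_\ast$ of $F^+$ by inverting $F^+$ on the orthogonal complement of $\ker F^+$ and extending by zero on $\ker F^- = (\im F^+)^\perp$. Then $1 - Q_\ast F^+ = P^+$ and $1 - F^+ Q_\ast = P^-$, so $Q_\ast$ is an admissible parametrix. For any $G$-invariant finite-dimensional subspace $V$ with orthonormal basis $\{e_j\}$, the Schwartz kernel of the orthogonal projection $P_V$ onto $V$ equals $\sum_j e_j \otimes e_j^*$, and a direct computation of the $g$-trace gives
\[
\Tr_g(P_V) = \sum_j \langle g \cdot e_j, e_j \rangle_{L^2} = \tr(g|_V).
\]
Applying this to $V = \ker F^\pm$ and using the first part of the lemma yields $\ind_g(F) = \tr(g|_{\ker F^+}) - \tr(g|_{\ker F^-}) = \ind_G(F)(g)$.

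The main technical obstacle is the smoothness of $R$ in the first part: a priori, nothing in the definition of $g$-Fredholm forces two parametrices to differ by a smoothing operator. The partition-of-unity argument above handles this under the implicit assumption that the parametrices in question preserve regularity, which is consistent with $F^+$ being elliptic. In the applications of this lemma in the present paper, the relevant parametrices are built explicitly from heat operators as in Section \ref{sec parametrices}, so the required regularity is manifest.
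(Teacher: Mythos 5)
The paper does not actually prove this lemma: it is quoted as a special case of Lemmas 2.5 and 2.9 of \cite{HWW}, so your self-contained argument is necessarily a different route, and in substance it is the standard one. Your second part (the exact pseudo-inverse $Q_\ast$ whose remainders are the finite-rank projections onto $\ker F^{\pm}$, together with the computation $\Tr_g(P_V)=\tr(g|_V)$) is correct; the trace computation is exactly the one the paper performs later in the proof of Proposition \ref{prop eta g}. Note only that defining $Q_\ast$ as a bounded operator requires $\im F^+$ to be closed, i.e.\ genuine Fredholmness rather than mere finite-dimensionality of the kernel --- which is how the lemma is in fact invoked in the proof of Theorem \ref{thm index general psi}.

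The soft spot is in the first part, and you flag it yourself: nothing in the definition of $g$-Fredholm says a parametrix preserves regularity, so the smoothness of $(RF^+)Q_0$ and of $RS_-^{(0)}$ is an added hypothesis, not a consequence. (Also, what you call a ``partition-of-unity argument'' is an algebraic identity, not a partition of unity.) You can close this gap by never requiring $R=Q_1-Q_0$ to be smoothing. From $R = Q_1(1-F^+Q_0) - (1-Q_1F^+)Q_0 = Q_1S_-^{(0)} - S_+^{(1)}Q_0$ one gets
\[
\Tr_g(RF^+)-\Tr_g(F^+R) = \Tr_g\bigl(Q_1(S_-^{(0)}F^+)\bigr) - \Tr_g\bigl((F^+Q_1)S_-^{(0)}\bigr) - \Tr_g\bigl(S_+^{(1)}(Q_0F^+)\bigr)+\Tr_g\bigl((F^+S_+^{(1)})Q_0\bigr),
\]
where $S_-^{(0)}F^+$ and $F^+S_+^{(1)}$ have smooth kernels simply because $F^+$ is a differential operator applied to one variable of a smooth kernel. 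Each of the four terms is then a composition of a distributional-kernel operator with a smooth-kernel operator, so Lemma \ref{lem Trg trace} applies to each directly; substituting $F^+Q_1 = 1-S_-^{(1)}$ and $Q_0F^+=1-S_+^{(0)}$ and using the trace property once more, the terms cancel in pairs. This needs nothing beyond Lemma \ref{lem Trg trace} and the $g$-trace-class assumptions already in the definition, and is presumably the argument of Lemma 2.5 in \cite{HWW}.
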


\subsection{Convergence of a contribution from infinity}

Let the functions $\varphi_j$ and $\psi$ be as at the start of Subsection \ref{sec parametrices}, and $f_1$ as in \eqref{eq D on U}.
The following result is a variation on Lemma 5.5 in \cite{HWW}. Because of the boundary conditions used to define the domains of the operators \eqref{eq DC-DC+} and \eqref{eq DC+DC-}, we now use the arguments in Subsections \ref{sec ker proj}--\ref{sec heat Dirichlet} to prove this.
\begin{proposition} \label{prop lem 5.5}
The operator $(\varphi_1 \tilde Q - \varphi_2 Q_C) \sigma \psi' f_1$ is $g$-trace class for all $t>0$, and
\[
\lim_{t\downarrow 0} \Tr_g \bigl( (\varphi_1 \tilde Q - \varphi_2 Q_C) \sigma \psi' f_1 \bigr) = 0.
\]
\end{proposition}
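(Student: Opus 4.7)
The plan is to expand $\tilde Q$ and $Q_C$ as $s$-integrals of heat operators, apply Proposition \ref{prop eF eP} to convert the APS heat operator into Dirichlet heat operators plus a projection-$P$ correction, and then handle the two resulting pieces separately: the Dirichlet piece via the principle of not feeling the boundary (Theorem \ref{thm not feel bdry}), and the projection piece via Lemma \ref{lem ker proj}.

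For the $g$-trace class statement, I would observe that $(\varphi_1 \tilde Q - \varphi_2 Q_C) \sigma \psi' f_1$ has a smooth kernel (as pointed out just after Lemma \ref{lem Sj}) whose second-variable support lies in the compact set $\supp(\psi')$, so the operator is trace class and hence $g$-trace class. For the limit, I would start from
\[
\tilde Q = \int_0^t e^{-s \tilde D^- \tilde D^+} \tilde D^- \, ds, \qquad Q_C = \int_0^t e_P^{-s D_C^- D_C^+} D_C^- \, ds,
\]
and use Proposition \ref{prop eF eP} to rewrite
\[
e_P^{-s D_C^- D_C^+} D_C^- = D_C^- e_F^{-s D_C^+ D_C^-} + \bigl( e_F^{-s D_C^- D_C^+} D_C^- - D_C^- e_F^{-s D_C^+ D_C^-} \bigr) P.
\]
This splits the target $g$-trace into a \emph{Dirichlet part}
\[
\mathcal{I}_1(t) := \int_0^t \Tr_g\bigl( [\varphi_1 e^{-s \tilde D^- \tilde D^+} \tilde D^- - \varphi_2 D_C^- e_F^{-s D_C^+ D_C^-}] \sigma \psi' f_1 \bigr)\, ds
\]
and a \emph{projection part}
\[
\mathcal{I}_2(t) := \int_0^t \Tr_g\bigl( \varphi_2 [D_C^- e_F^{-s D_C^+ D_C^-} - e_F^{-s D_C^- D_C^+} D_C^-] P \sigma \psi' f_1 \bigr)\, ds,
\]
and my aim is to show $\mathcal{I}_j(t) \to 0$ as $t \downarrow 0$ for $j=1,2$.

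For $\mathcal{I}_1(t)$ I would use that $D = \tilde D$ on $V \supset \supp(\psi')$ and that the cutoffs can be chosen so that $\varphi_1 = \varphi_2 = 1$ on a neighbourhood of $\supp(\psi')$; on this locus the integrand is the $g$-trace of $D^- \circ K_s^+$ (with $K_s^+$ as in \eqref{eq def Kt}) composed with $\sigma \psi' f_1$, and Theorem \ref{thm not feel bdry} gives $\|D^- \circ K_s^+\| = \cO(s^j)$ for every $j$, uniformly on compact subsets of $U \times U$. Integration in $s$ yields $\mathcal{I}_1(t) = \cO(t^{j+1}) \to 0$. For $\mathcal{I}_2(t)$ I would use the product identification $V \cong N \times (0,1)$ to write the $g$-trace as an iterated integral over $N$ and $(0,1)$, whose inner $N$-integral has exactly the form $\int_N \tr(g\,(\kappa_s \circ P)(g^{-1}n,n))\,dn$ treated by Lemma \ref{lem ker proj}. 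That lemma bounds it by a constant times $\sup_{n,n'}\|(D_N \otimes D_N)^{s_0}\kappa_s(n,n')\|$ for any integer $s_0 > \dim(N)/2$, and the required derivative estimates in the warped-product region come from standard parabolic regularity combined with Theorem \ref{thm not feel bdry}, uniformly in $s \in (0,1]$. Integrating over $s \in (0,t)$ then gives $\mathcal{I}_2(t) = \cO(t) \to 0$.

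The main obstacle will be in the analysis of $\mathcal{I}_2(t)$: verifying the uniform-in-$s$ derivative bound required by Lemma \ref{lem ker proj}. The bracketed operator is a short-time boundary correction whose kernel could \emph{a priori} develop singularities as $s \downarrow 0$, and one must exploit that $\supp(\psi')$ is bounded away from $x=0$ (so the warped-product coefficients $f_i$ and $e^{\varphi}$ remain smooth there) in order to reduce, via the principle of not feeling the boundary and the identities from Subsection \ref{sec heat bdry}, to controllable heat-kernel derivatives on the compact double $\tilde M$.
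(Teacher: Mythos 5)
Your proposal is correct and follows essentially the same route as the paper's proof: both expand the parametrices as $\int_0^t$ of heat operators, use Proposition \ref{prop eF eP} to trade the APS heat operator for Dirichlet ones plus projection terms, control the Dirichlet-versus-double comparison by Theorem \ref{thm not feel bdry}, and handle the composition with the non-local projection $P$ via Lemma \ref{lem ker proj} with a constant uniform in $s$. Your splitting into a projection-free Dirichlet part and a "commutator defect times $P$" part is only a cosmetic rearrangement of the paper's grouping into a $K_s\circ D^-\circ P$ term and a $D^-\circ K_s\circ(1-P)$ term (and your passing claim that compact second-variable support of a smooth kernel forces trace-class is unjustified, but only $g$-trace class is needed and that does follow).
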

\begin{proof}
Let $\varphi \in C^{\infty}_c(U)$ be such that $\varphi \psi' = \psi'$, and
for $j=1,2$, 
 $\varphi$ is constant zero outside the support of $1-\varphi_j$.
As in the proof of Lemma 5.5 in \cite{HWW},
\[
\Tr_g \bigl( (\varphi_1 \tilde Q - \varphi_2 Q_C) \sigma \psi' f_1\bigr)  =
-\int_0^t \Tr_g\left( \varphi \left(e^{-s \tilde D^- \tilde D^+} \tilde D^- - e_P^{-s D_C^- D_C^+} D_C^- \right) \sigma \psi' f_1 \right) \, ds.
\]
By 
Proposition \ref{prop eF eP},
 and the fact that $\tilde D|_U = D_C|_U = D|_U$, this equals
\begin{multline} \label{eq kernels P 1-P}
-\int_0^t \Tr_g\left( \varphi \left(e^{-s \tilde D^- \tilde D^+}- e_F^{-s D_C^- D_C^+} \right) P D^-\sigma \psi' f_1\, \right)ds\\
-\int_0^t  \Tr_g\left(  \varphi D^- \left(e^{-s \tilde D^+ \tilde D^-}- e_F^{-s D_C^+ D_C^-} \right) (1-P)\sigma \psi' f_1 \right) \, ds.
\end{multline}
To be precise, in the first term we use the equality
\[
D^- \sigma \psi' = \sigma \psi' D^- + \sigma c(d\psi')
\]
and apply Proposition \ref{prop eF eP} twice, with $\zeta = \sigma \psi'$ and with $\zeta = \sigma c(d\psi')$.

The absolute value 
 of the first term in \eqref{eq kernels P 1-P} is at most equal to
\beq{eq int 1st term}
\int_0^t \int_0^2 |\varphi(x)\psi'(x) f_1'(x)| \left| \int_N \tr\left( g K^+_sD^-\sigma P (g^{-1}n,x; n,x)\right)\, dn \right|
\, dx
\, ds
\eeq
Here $K^+_s$ is as in \eqref{eq def Kt}. By Theorem \ref{thm not feel bdry}, the kernel $K^+_sD^-\sigma$ satisfies the conditions of Lemma \ref{lem ker proj}, where the constant $C_2$ is independent of $s$. That lemma therefore implies that the integrand in \eqref{eq int 1st term} is bounded in $s$. Hence the integral converges, and goes to zero as $t \downarrow 0$.

The argument for the second term in \eqref{eq kernels P 1-P} is similar, where $P$ is replaced by $1-P$.
\end{proof}

For $t>0$, we write
\[
A_{g}^t(D_C, \psi') := \int_t^{\infty} \Tr_g\bigl(\varphi_2 e_P^{-s D_C^- D_C^+} D_C^- \psi' f_1\bigr)\, ds,
\]
when this converges.
\begin{proposition}\label{prop conv cusp contr}
If the operators \eqref{eq DC+} and \eqref{eq DC-} are invertible, then $A_{g}^t(D_C, \psi')$
converges
 for all $t>0$.
\end{proposition}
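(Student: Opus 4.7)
The plan is to verify absolute convergence of the $s$-integral from $t$ to $\infty$ by showing that the $g$-trace is finite for each $s \geq t$ and decays exponentially as $s \to \infty$.

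First, I would use that on $U = N \times (0, \infty)$ the functions $\psi'$ and $f_1$ depend only on the $x$-coordinate, and $\psi'$ has compact support in $(0,1)$. Thus $\psi' f_1$, as a multiplication operator on $L^2(S_C)$, is supported in a compact cylindrical region $K := N \times [a, b] \subset V$ that lies at positive distance from $\partial C = N$. Combined with the smoothness statement of Lemma \ref{lem smooth At}, the kernel of $\varphi_2 e_P^{-s D_C^- D_C^+} D_C^- \psi' f_1$ is smooth and compactly supported in the second variable, so $\Tr_g(\cdot)$ reduces to a convergent integral of a smooth integrand over $K$, giving finiteness for every $s \geq t$.

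Next, the invertibility of \eqref{eq DC+} and \eqref{eq DC-} yields a constant $c>0$ with $D_C^- D_C^+ \geq c^2$ on $W^2_D(S_C^+; P)$ and $D_C^+ D_C^- \geq c^2$ on $W^2_D(S_C^-; 1-P)$. Hence $\|e_P^{-s D_C^- D_C^+}\|_{L^2 \to L^2} \leq e^{-sc^2}$, and similarly $\|e_{1-P}^{-s D_C^+ D_C^-}\|_{L^2 \to L^2} \leq e^{-sc^2}$. Splitting the semigroup as $e_P^{-s D_C^- D_C^+} = e_P^{-(s-t/2) D_C^- D_C^+}\cdot e_P^{-(t/2) D_C^- D_C^+}$ for $s \geq t$, one obtains $\|\lambda_s^P(\cdot, m_2)\|_{L^2} \leq e^{-(s-t/2)c^2}\|\lambda_{t/2}^P(\cdot, m_2)\|_{L^2}$ uniformly for $m_2 \in K$.

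To convert this $L^2$-decay into the uniform pointwise bound on $\lambda_s^P(g^{-1}m, m)$ for $m \in K$ that the $g$-trace integral requires, I would use that $K$ and $g^{-1}K$ sit strictly inside $C$: by interior elliptic regularity and Sobolev embedding, the sup-norm of $\lambda_s^P(\cdot, m_2)$ on $g^{-1}K$ is controlled by $\|(1 + D_C^- D_C^+)^k \lambda_s^P(\cdot, m_2)\|_{L^2}$ for $k > \dim(M)/4$, and the extra factor $(1 + D_C^- D_C^+)^k$ can be absorbed into a further splitting $e_P^{-(t/4) D_C^- D_C^+}$ at the cost of a bounded constant depending only on $t$. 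Combining with Proposition \ref{prop eF eP} and the style of argument from Proposition \ref{prop lem 5.5} and Lemma \ref{lem ker proj} to treat the two branches in the $P$/$(1-P)$ decomposition symmetrically, this yields $|\Tr_g(\varphi_2 e_P^{-s D_C^- D_C^+} D_C^- \psi' f_1)| \leq C_t\, e^{-(s-t/2)c^2}$, which is integrable on $[t, \infty)$. The main obstacle will be the interplay between the APS-type boundary conditions and Sobolev regularity; this is defused by the fact that $K$ is bounded away from $\partial C$, reducing the estimates to standard interior ones.
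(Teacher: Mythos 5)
Your proposal is correct, and the skeleton of the argument is sound, but the route is genuinely different from the paper's. The paper does not work with the APS heat semigroup $e_P^{-sD_C^-D_C^+}$ directly; instead it first applies Proposition~\ref{prop eF eP} to convert $e_P^{-sD_C^-D_C^+}D_C^-$ into a combination $e_F^{-sD_C^-D_C^+}D_C^-P + D_C^-e_F^{-sD_C^+D_C^-}(1-P)$ involving the Dirichlet (Friedrichs) heat kernels $\kappa_s^{F,\pm}$ and the projection $P$. It then invokes Lemma~\ref{lem est kappa F} (a Grigoryan-style pointwise bound $\| (D_N^{k_1}\otimes D_N^{k_2})D_C\kappa_s^F\|\leq Be^{-b^2 s}$, proved by exhaustion of $C$ by bounded domains and eigenfunction expansions) and Lemma~\ref{lem ker proj} (which converts such derivative bounds into a trace-norm bound after composing with the nonlocal projection $P$). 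Your argument avoids the Dirichlet detour entirely: you use the self-adjointness and strict positivity of $D_C^-D_C^+$ on $W^2_D(S_C^+;P)$ to get the operator-norm decay $\|e_P^{-sD_C^-D_C^+}\|\leq e^{-sc^2}$, split the semigroup to freeze a fixed short-time kernel in $L^2$, and then bootstrap from $L^2$ to pointwise bounds via interior elliptic regularity on the compact cylinder $K$ sitting away from $\partial C$. This is a cleaner functional-analytic route, and it makes the invertibility hypothesis of~\eqref{eq DC+}--\eqref{eq DC-} do the work directly (rather than feeding it in through the Friedrichs extension plus $D_C^2 \geq b^2$). What the paper's route buys is reusability: Lemmas~\ref{lem ker proj} and~\ref{lem est kappa F} and Proposition~\ref{prop eF eP} are the same tools already deployed in Proposition~\ref{prop lem 5.5} and later in Lemma~\ref{lem Trace exp}, so the Dirichlet decomposition is the uniform language of Section~\ref{sec smooth kernels}. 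One small remark: once you have the semigroup-splitting plus interior regularity estimate, the final appeal to Proposition~\ref{prop eF eP} and Lemma~\ref{lem ker proj} ``to treat the two branches symmetrically'' is actually unnecessary --- the point of your argument is precisely that you never have to touch $P$ or $1-P$ separately, since they are already encoded in the domain of the self-adjoint operator you are applying functional calculus to.
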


\begin{lemma}\label{lem smooth At}
 The operator $e_P^{-sD_C^-D_C^+}D_C^-$ has a smooth kernel.
\end{lemma}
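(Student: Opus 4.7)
The plan is to invoke Proposition \ref{prop eF eP}, which gives the identity
\[
e_P^{-sD_C^-D_C^+}D_C^- = e_F^{-s D_C^- D_C^+}D_C^- P + D_C^- e_F^{-s D_C^+ D_C^-}(1-P),
\]
and to show that each of the two summands on the right has a smooth Schwartz kernel on $C \times C$. Since $D_C^\mp D_C^\pm$ is a nonnegative, self-adjoint, second-order elliptic differential operator on the manifold-with-boundary $C$, and its Friedrichs extension corresponds to the local (Dirichlet) boundary conditions on $N = \partial C$, classical parabolic regularity for local elliptic boundary value problems yields that $e_F^{-s D_C^\mp D_C^\pm}$ has a smooth Schwartz kernel on $\bar C \times \bar C$ for every $s>0$. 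Applying the first-order differential operator $D_C^-$ in one of the two variables preserves smoothness of a kernel, so $e_F^{-s D_C^- D_C^+}D_C^-$ and $D_C^- e_F^{-s D_C^+ D_C^-}$ both have smooth Schwartz kernels.

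To handle the composition with $P$ and with $1-P$, the plan is to pass to the product structure \eqref{eq isom U} on $C$ and the unitary identification used in \eqref{eq P zeta}, under which $P = P_N \otimes 1$ with $P_N$ the spectral projection of $D_N$ onto eigenvalues in $J$. As in the proof of Lemma \ref{lem Dirichlet}, the Dirichlet heat operators are diagonal with respect to the decomposition of $L^2(S_C)$ into $D_N$-eigenspaces, so the kernel of each summand decomposes as a spectral series indexed by $\lambda \in \spec(D_N^\pm)$, each term being the tensor product of a smooth eigenprojection kernel on $N \times N$ with a smooth one-dimensional Dirichlet heat kernel on $(0,\infty) \times (0,\infty)$, acted on by the first-order operator $D_C^-$ on one side.

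The main technical obstacle will be showing smoothness of the full kernel, i.e.\ locally uniform convergence of this spectral series together with all of its derivatives in both variables. This is to be handled by combining Weyl's law for $D_N$ (polynomial eigenvalue growth, as already used in the proof of Lemma \ref{lem ker proj}) with exponential decay in $|\lambda|$ of the one-dimensional Dirichlet heat kernels $e_F^{-s(D_C^\mp)_\lambda(D_C^\pm)_\lambda}$, the latter coming from the bound $(D_C^\mp)_\lambda(D_C^\pm)_\lambda \geq c\lambda^2$ on compact $x$-intervals for $|\lambda|$ large, which is visible from the $\lambda f_2$ term in the expression for $(D_C^\pm)_\lambda$ in the proof of Lemma \ref{lem Dirichlet}. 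The second summand is treated in the same way with $P$ replaced by $1-P$ and the roles of $D_C^+$ and $D_C^-$ interchanged.
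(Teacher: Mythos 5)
Your strategy --- reduce to Dirichlet heat operators via Proposition \ref{prop eF eP} and then control the composition with $P$ through a spectral series along $N$ --- is not the paper's route, and it has a gap at exactly the step you flag as the main technical obstacle. The paper's proof is a short soft argument that never touches the product structure: by functional calculus for the self-adjoint operator \eqref{eq DC-DC+}, the operator $(D_C^-D_C^+)^k e_P^{-sD_C^-D_C^+}$ is bounded on $L^2(S_C^+)$ for every $k$, so $e_P^{-sD_C^-D_C^+}\xi$ lies in the domain of all powers of the elliptic operator $D_C^-D_C^+$ and is therefore smooth; hence $e_P^{-sD_C^-D_C^+}$, and then its composition with $D_C^-$, has a smooth kernel. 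No boundary conditions, eigenfunction expansions or Weyl asymptotics are needed.

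The gap in your version is the decay estimate for the one-dimensional kernels. The inequality ``$(D_C^{\mp})_{\lambda}(D_C^{\pm})_{\lambda} \geq c\lambda^2$ on compact $x$-intervals'' is not a meaningful operator inequality: the heat kernel of the operator on all of $(0,\infty)$, evaluated at points of a compact interval, is not controlled by the restriction of the operator to that interval (Dirichlet domain monotonicity gives a lower bound for the kernel, not the upper bound you need). Moreover, the global inequality genuinely fails in the paper's generality: $(D_C^{-})_{\lambda}(D_C^{+})_{\lambda}$ has a potential of the form $\lambda^2 f_2^2 + O(\lambda)$, and $f_2$ may tend to $0$ at infinity (hyperbolic cusps are explicitly among the intended examples), in which case the bottom of the spectrum of $(D_C^{-})_{\lambda}(D_C^{+})_{\lambda}$ stays bounded as $|\lambda| \to \infty$ and $\| e_F^{-s(D_C^{-})_{\lambda}(D_C^{+})_{\lambda}}\|$ does not decay in $\lambda$ at all. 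Getting the superpolynomial pointwise decay you need on compact sets would then require Agmon-type localisation estimates; it is not ``visible from the $\lambda f_2$ term''. If you insist on a decomposition argument, the correct mechanism is the one in Lemmas \ref{lem ker proj} and \ref{lem est kappa F}: bounds on $(D_N^{k_1}\otimes D_N^{k_2})\kappa$ obtained from ellipticity of the full operator, which give polynomial decay of arbitrary order of the coefficients $\kappa^{j,k}$ and hence convergence of the series with all derivatives --- not a spectral gap growing like $\lambda^2$. The direct functional-calculus argument makes all of this unnecessary.
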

\begin{proof}
%
%
For any $k \in \N$, we can form the bounded operator
\[
(D_C^-D_C^+)^k
e_P^{-s D_C^- D_C^+}
\]
on $L^2(S_C^+)$,
by applying functional calculus to \eqref{eq DC-DC+}.
So for all such $k$, and all $\xi \in L^2(S_C^+)$, the section $(D_C^-D_C^+)^k(e^{-s D_C^- D_C^+} \xi)$ is in $L^2(S_C^+)$. Hence $e^{-s D_C^- D_C^+} \xi$ is smooth, because $D_C^-D_C^+$ is elliptic. So $e_P^{-s D_C^- D_C^+}$ has a smooth kernel, and therefore so does $e_P^{-sD_C^- D_C^+}D_C^-$.
%
\end{proof}


We will use an analogue of  Theorem 10.24 in \cite{Grigoryan09}.
\begin{lemma}\label{lem est kappa F}
Suppose that $D_C^2 \geq b^2$ for $b>0$.
Then for all $k_1, k_2, k_3 \in \Z_{\geq 0}$,  there is a $B>0$ such that on $U$, for all $s \geq 1$,
\[
\begin{split}
\| ( D_N^{k_1} \otimes D_N^{k_2})D_C \frac{\partial^{k_3}}{\partial s^{k_3}}\kappa_s^{F}\| & \leq B e^{-b^2s} \quad \text{and}\\
\| ( D_N^{k_1} \otimes D_N^{k_2})\frac{\partial^{k_3}}{\partial s^{k_3}} \kappa_s^{F}D_C\| & \leq B e^{-b^2s}.
\end{split}
\]
\end{lemma}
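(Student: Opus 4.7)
The strategy is to use the spectral gap $D_C^2 \geq b^2$ through functional calculus to produce operator-norm estimates with the desired exponential decay $e^{-b^2 s}$, and to convert these into pointwise bounds on the smooth kernel via elliptic regularity and Sobolev embedding, in the spirit of Grigoryan's Theorem 10.24 in \cite{Grigoryan09}.

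First, by functional calculus, $D_C \frac{\partial^{k_3}}{\partial s^{k_3}} e_F^{-sD_C^2} = (-1)^{k_3} D_C^{2k_3+1} e_F^{-sD_C^2}$, and using the self-adjointness of $D_N$ to interpret the action of $D_N^{k_2}$ on the second kernel variable as composition on the right, the estimate reduces to a pointwise bound for the smooth kernel of
\[
T_s := D_N^{k_1}\, D_C^{2k_3+1}\, e_F^{-sD_C^2}\, D_N^{k_2}.
\]
For $s \geq 1$ I would factor $e_F^{-sD_C^2} = e_F^{-D_C^2/2}\, e_F^{-(s-1)D_C^2}\, e_F^{-D_C^2/2}$: by the spectral theorem and $D_C^2 \geq b^2$, the middle factor has operator norm $\leq e^{-(s-1)b^2}$, while each outer factor is smoothing. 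Indeed, $\|D_C^m e_F^{-D_C^2/2}\|_{\mathrm{op}} < \infty$ for every $m$ by functional calculus, and since $D_C$ is a first-order elliptic differential operator its powers control all local Sobolev norms on compact subsets of $\mathrm{int}(C)$; hence $e_F^{-D_C^2/2}$ maps $L^2(S_C)$ continuously into $H^m_{\mathrm{loc}}(S_C)$ for every $m$. As $D_N^{k_j}$ is a differential operator of order $k_j$ in the $N$-direction on $U$, bounded $H^{k_j}_{\mathrm{loc}}(U) \to L^2_{\mathrm{loc}}(U)$, the composition gives $\|T_s\|_{L^2 \to L^2_{\mathrm{loc}}(U)} \leq C e^{-b^2 s}$ uniformly in $s \geq 1$.

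To convert this into a pointwise bound, I would refine the factorization further (splitting each $e_F^{-D_C^2/2}$ as $e_F^{-D_C^2/4}\, e_F^{-D_C^2/4}$, and so on), so that by duality and self-adjointness of the heat factors, $T_s$ is bounded $H^{-\sigma}(S_C) \to H^\sigma_{\mathrm{loc}}(S_C)$ with norm $\leq C' e^{-b^2 s}$ for any $\sigma > p/2$, where $p = \dim M$. For such $\sigma$, the delta distributions at points of $U$ lie in $H^{-\sigma}$ with norms uniformly bounded, by Sobolev embedding, so the pointwise bound
\[
|K_s(m_1,m_2)| = \bigl|\langle T_s\delta_{m_2},\, \delta_{m_1}\rangle\bigr| \leq \|T_s\|_{H^{-\sigma}\to H^\sigma}\,\|\delta_{m_1}\|_{H^{-\sigma}}\,\|\delta_{m_2}\|_{H^{-\sigma}} \leq B e^{-b^2 s}
\]
follows. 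The second inequality of the lemma (with $D_C$ on the right) is completely symmetric, using that $D_C$ commutes with the spectral calculus of $D_C^2$ to move $D_C$ to the other side.

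The main obstacle is that $D_N$ does \emph{not} commute with $D_C$ on the warped product $U$, so one cannot handle the $D_N^{k_j}$ factors by functional calculus of $D_C$. Interior elliptic regularity for $D_C$ must be used to bound $D_N^{k_j}$ by powers of $D_C$, and the exponential decay $e^{-b^2 s}$ coming from the spectral gap must be preserved uniformly through every refinement of the splitting of $e_F^{-sD_C^2}$.
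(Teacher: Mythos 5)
Your proof is correct in substance but takes a genuinely different route from the paper's. The paper approximates $C$ by relatively compact open subsets $\Omega$, uses the eigenfunction decomposition of the Dirichlet heat kernel $\kappa_s^{F,\Omega}$ together with Weyl's law to get a bound of the form $C_\Omega\, e^{-\lambda_1(\Omega) s}$, and then appeals to domain monotonicity ($\lambda_1(\Omega) \geq b^2$ for every $\Omega \subset C$ under the hypothesis $D_C^2 \geq b^2$) and a limiting argument $\Omega \nearrow C$. Your argument, by contrast, works directly on $C$ via the spectral theorem: the semigroup factorization $e_F^{-sD_C^2} = e_F^{-D_C^2/2}\, e_F^{-(s-1)D_C^2}\, e_F^{-D_C^2/2}$ cleanly isolates the exponential decay $e^{-(s-1)b^2}$ in the middle factor, while the outer factors supply all the smoothing one needs to convert an operator-norm bound to a pointwise kernel bound via Sobolev embedding. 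This avoids both the exhaustion argument and any appeal to discreteness of spectrum or Weyl asymptotics, and is arguably the more transparent proof. You have also correctly identified and handled the obstacle that $D_N$ does not commute with $D_C$ on the warped product, by invoking elliptic regularity for $D_C$ to control $D_N^{k_j}$ rather than functional calculus.

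One point to sharpen: you invoke \emph{interior} elliptic regularity, which gives uniform bounds only on compact subsets of $\mathrm{int}(C)$. But $U = N \times (0,2)$ abuts the Dirichlet boundary $N \times \{0\}$, and the later application in Lemma \ref{lem Trace exp} takes a supremum over all $x, x' \in (0,2)$, so the bound is needed uniformly up to that boundary. This is fixable without changing your strategy: Dirichlet conditions admit full boundary elliptic regularity, so $e_F^{-D_C^2/2}$ maps $L^2(S_C)$ into $H^m(S_C) \cap H^1_0(S_C)$ for every $m$ (not merely $H^m_{\mathrm{loc}}$ of the interior), and the Sobolev embedding then holds on $\overline{U}$. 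You should state the regularity in this form rather than the interior one, so the pointwise bound is genuinely uniform on $U \times U$ as the lemma asserts.
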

\begin{proof}
For a relatively compact open subset $\Omega \subset C$, let $D_{\Omega}^2$ be the Friedrichs extension of $D^2|_{\Gamma_c^{\infty}(S|_{\Omega})}$. Then $D_{\Omega}^2$ has discrete spectrum satisfying the Weyl law. Let $\lambda_1(\Omega)$ be the smallest eigenvalue. Let $\kappa_s^{F, \Omega}$ be the  heat kernel associated to $D_{\Omega}^2$.  Then for all $k_1, k_2, k_3 \in \Z_{\geq 0}$ there is a constant $C_{\Omega}$ such that for all $n,n' \in N$ and $x,x' \in (0,\infty)$,
\beq{eq est Omega}
\| ( D_N^{k_1} \otimes D_N^{k_2})D \frac{\partial^{k_3}}{\partial s^{k_3}}\kappa_s^{F, \Omega}(n, x; n', x')\| \leq C_{\Omega} e^{-s \lambda_1(\Omega)},
\eeq
and a similar estimate holds with $D\kappa_s^{F, \Omega}$ replaced by $\kappa_s^{F, \Omega} D$.
See e.g.\ 
the second-last displayed equation of the proof of Theorem 10.24 in \cite{Grigoryan09}
 for the scalar case, and without differential operators applied to the heat kernel. The argument is relatively elementary and also applies in our setting. Indeed, \eqref{eq est Omega} follows directly from a decomposition of $\kappa_s^{F, \Omega}$ into eigensections of $D_{\Omega}^2$, as in the third-last displayed equation of the proof of Theorem 10.24 in \cite{Grigoryan09}. Each derivative with respect to $s$ can be replaced by  $D_C^2$ because $\kappa_s^F$ satisfies the heat equation.

 One has $\lambda_1(\Omega) \leq \lambda_1(\Omega')$ when $\Omega' \subset \Omega$, so then $e^{-s \lambda_1(\Omega')} \leq e^{-s \lambda_1(\Omega)}$. This implies that  for all such $\Omega$,
\[
e^{-s \lambda_1(\Omega)} \leq e^{-b^2s},
\]
and that the lemma is true.
\end{proof}

\begin{proof}[Proof of Proposition \ref{prop conv cusp contr}.]
Let $t \in (0,1]$.
Let $\kappa_t^{P, \pm}$ be the Schwartz kernel of $e_P^{-t D_C^{\mp}D_C^{\pm}}$.
As in the proof of Proposition \ref{prop lem 5.5}, we estimate
\begin{multline}\label{eq est Tre At}
 \int_t^{\infty}  \int_{\overline{M \setminus Z}} \left| \tr
\varphi_2 (g \kappa_s^{P, +}D_C^-) (g^{-1}m,m)\psi' f_1
 \right|\, ds \, dm \\
 \leq  \int_t^{\infty} \int_0^2  |\psi'(x) f_1(x)| \left| \int_N
  \tr (g \kappa_s^{P, +}D_C^-) (g^{-1}n, x;  n, x)\, dn \right| \,dx \, ds\\
  =
  \int_t^{\infty} \int_0^2  |\psi'(x) f_1(x)| \left| \int_N
  \tr \left(g\kappa_s^{F, +}D_C^- P + gD_C^-\kappa_s^{F, -}(1- P)\right) (g^{-1}n, x;  n, x)\, dn \right| \,dx \, ds.
%
\end{multline}
Here we used  Proposition \ref{prop eF eP}. 

By Lemma \ref{lem est kappa F}, we may apply Lemma \ref{lem ker proj} to find that there is a constant $B>0$ such that for all $x \in (0,\infty)$ and all $s\geq 1$,
\[
\left|
 \int_N
 \tr\left(g\kappa_s^{F, +}D_C^- P + D_C^-\kappa_s^{F, -}(1- P)\right) (g^{-1}n, x;  n, x) \, dn\right| \leq Be^{-b^2 s}.
\]
So the integral in the right hand side of \eqref{eq est Tre At} 
 converges.
\end{proof}

We will deduce convergence of $\lim_{t\downarrow 0} A_{g}^t(D_C, \psi')$ from the index formula in the proof of Theorem \ref{thm index general psi} below. But as an aside, we note that this convergence also follows from a generalisation by Zhang of a result by Bismut and Freed, in the case of twisted $\Spinc$-Dirac operators.
\begin{proposition}
If  $D$ is a twisted $\Spinc$-Dirac operator, then $\lim_{t\downarrow 0} A_{g}^t(D_C, \psi')$ converges.
\end{proposition}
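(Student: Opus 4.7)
The plan is to show that the integrand of $A_g^t(D_C,\psi')$ is integrable near $s=0$, reducing the question to Zhang's equivariant generalisation of the Bismut--Freed short-time estimate on the closed doubled manifold $\tilde M$ constructed in Subsection \ref{sec parametrices}. Integrability on $[t,\infty)$ for every $t>0$ has already been established in the proof of Proposition \ref{prop conv cusp contr}, so only the short-time behaviour is at issue.

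First I would apply Proposition \ref{prop eF eP} to rewrite
\[
\varphi_2 e_P^{-sD_C^-D_C^+}D_C^-\psi' f_1
=\varphi_2 e_F^{-sD_C^-D_C^+}D_C^-P\,\psi' f_1
+\varphi_2 D_C^- e_F^{-sD_C^+D_C^-}(1-P)\,\psi' f_1,
\]
converting the APS-type heat operator into Dirichlet ones. Because $\psi'$ is supported in $V\subset U$, where $D$, $D_C$ and $\tilde D$ all agree, the principle of not feeling the boundary (Theorem \ref{thm not feel bdry}) lets me replace $\kappa_s^{F,\pm}$ by the heat kernel $\tilde\kappa_s^{\pm}$ of $\tilde D^{\mp}\tilde D^{\pm}$ on the closed manifold $\tilde M$ at the cost of a kernel $K_s^{\pm}$ whose pointwise norm and $N$-derivatives are $O(s^j)$ for every $j$. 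Using Lemma \ref{lem ker proj} to control the $g$-trace of $K_s^{\pm}$ composed with $D_C^-P$ or $D_C^-(1-P)$, the contribution of $K_s^{\pm}$ to the integrand is $O(s^j)$ for arbitrarily large $j$ and hence integrable at $s=0$.

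This reduces the problem to bounding the closed-manifold expressions
\[
\Tr_g\bigl(\varphi_2\, e^{-s\tilde D^-\tilde D^+}\tilde D^- P\,\psi' f_1\bigr)
\quad\text{and}\quad
\Tr_g\bigl(\varphi_2\, \tilde D^- e^{-s\tilde D^+\tilde D^-}(1-P)\,\psi' f_1\bigr)
\]
as $s\downarrow 0$ on $\tilde M$. Here I would invoke Zhang's equivariant extension of the Bismut--Freed theorem, which for the twisted $\Spinc$-Dirac operator $\tilde D$ gives $\Tr_g\bigl(\tilde D^{\pm} e^{-s\tilde D^{\mp}\tilde D^{\pm}}\bigr)=O(\sqrt{s})$ as $s\downarrow 0$. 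Using the trace property of $\Tr_g$ (Lemma \ref{lem Trg trace}) to move the smooth multipliers $\varphi_2\psi' f_1$ and the bounded operator $P$ or $1-P$ under the trace, this closed-manifold estimate transfers to the localised expressions above, yielding the required integrability at $s=0$.

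The main obstacle I anticipate is that $P$ is in general not a spectral projection for $\tilde D$, so Zhang's estimate does not apply directly with $P$ inserted. To handle this I would exploit the warped product structure \eqref{eq isom U}--\eqref{eq D on U} on a neighbourhood of $\supp(\psi')$, on which $P$ commutes with $\tilde D^\mp \tilde D^\pm$, and work with the pointwise diagonal form of Zhang's estimate on the fixed-point set rather than the integrated one, so that insertion of $P$ on the boundary contributes only a uniform bounded factor and the $O(\sqrt{s})$ bound survives.
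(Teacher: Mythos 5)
Your reduction to the closed double is the same as the paper's: apply Proposition \ref{prop eF eP} to replace the APS heat operator by Dirichlet ones, then use Theorem \ref{thm not feel bdry} together with Lemma \ref{lem ker proj} to replace $\kappa_s^{F,\pm}$ by $\tilde\kappa_s^{\pm}$ up to a small error. The gap is in the final step, and the worry you raise yourself is fatal to the fix you propose. Zhang's estimate controls the fibrewise trace of $g\,\tilde D^-\tilde\kappa_s^-$ \emph{on the twisted diagonal}; composing with $P$ produces a kernel whose diagonal values are integrals over $N$ of off-diagonal values of $\tilde\kappa_s^+\tilde D^-$, and these are not governed by the diagonal cancellation. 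That $P$ commutes with $\tilde D^{\mp}\tilde D^{\pm}$ over the product region does not help, because $P$ is a nonlocal operator on $N$, so ``insertion of $P$ contributes only a bounded factor'' is false at the level of pointwise diagonal traces. In fact the separate terms genuinely diverge: in the cylindrical model of Section \ref{sec APS} with $g=e$, the term with $P$ inserted contributes, up to bounded factors, $\sum_{\lambda>0} m_\lambda\,\lambda\, e^{-\lambda^2 s}/\sqrt{4\pi s}$, which by Weyl's law grows like $s^{-\dim(N)/2-1}$ as $s\downarrow 0$ and is not integrable. So no $\cO(\sqrt{s})$, nor even $\cO(1)$, bound holds for either term on its own.

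The missing idea is that the two terms must be combined \emph{before} Zhang's estimate is invoked. Since the heat operator intertwines with $\tilde D$, one has $\tilde\kappa_s^+\tilde D^- = \tilde D^-\tilde\kappa_s^-$, and hence on $U$
\[
\tilde\kappa_s^{+} D_C^{-} P + D_C^{-}\tilde\kappa_s^{-}(1-P)
= \tilde D^{-}\tilde\kappa_s^{-}P + \tilde D^{-}\tilde\kappa_s^{-}(1-P)
= \tilde D^{-}\tilde\kappa_s^{-},
\]
so the projection disappears entirely. Only then does the Bismut--Freed--Zhang estimate
$\tr\bigl(g\,\tilde D^-\tilde\kappa_s^-(g^{-1}n,x;n,x)\bigr) = \cO(s^{1/2})$
apply, giving integrability of the combined integrand at $s=0$. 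This cancellation between the $P$ and $1-P$ pieces is the whole point of the argument, and is consistent with Remark \ref{rem APS bdry cond}, where a different recombination of the Dirichlet kernels leads to a genuinely different (and in general wrong) contribution from infinity.
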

\begin{proof}
By Proposition \ref{prop conv cusp contr} and \eqref{eq est Tre At}, it is enough to show
 that the limit
\[
\lim_{t\downarrow 0}
  \int_t^{1} \int_0^2  |\psi'(x) f_1(x)| \left| \int_N
  \tr \left(g\kappa_s^{F, +}D_C^- P + D_C^-\kappa_s^{F, -}(1- P)\right) (g^{-1}n, x;  n, x)\, dn \right| \,dx \, ds
\]
converges. By Theorem \ref{thm not feel bdry} and Lemma \ref{lem ker proj}, we may replace $\kappa_s^{F, \pm}$ by $\tilde \kappa_s^{\pm}$ here.
And
\[
\tilde \kappa_s^+ D_C^- |_U = \tilde \kappa_s^+ \tilde D^-|_U = \tilde D^-  \tilde \kappa_s^- |_U.
\]
Arguing similarly for the second term, we find
%
%
\[
\left(
\tilde \kappa_s^{+}D_C^- P + D_C^-\tilde \kappa_s^{-}(1- P)\right) |_U =
\tilde D^- \tilde  \kappa_s^{-}|_U.
\]
We now use the assumption that $D$, and hence $\tilde D$ is a twisted $\Spinc$-Dirac operator. Then (2.2) in \cite{Zhang90}, generalising the second part of Theorem 2.4 in \cite{Bismut86}, states that
\[
\tr  \bigl( g\tilde D^- \tilde \kappa_s^{-} (g^{-1}n, x;  n, x)\bigr) = \cO(s^{1/2})
\]
as $s \downarrow 0$, uniformly in $(n,x) \in U$.
\end{proof}

\subsection{Appearance of a contribution from infinity}

\begin{lemma}\label{lem Trace exp}
Let $\chi \in C^{\infty}_c(0,\infty)$. Suppose that $D_C^2 \geq b^2>0$.
There is a   $B>0$ such that for all $s>0$,
\beq{eq Trace exp}
\Tr(|\chi e_P^{-s D_C^- D_C^+} D_C^- \psi' f_1|) \leq Be^{-b^2s}.
\eeq
\end{lemma}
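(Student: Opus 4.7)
My plan is to reduce to the Dirichlet heat kernel estimates via Proposition \ref{prop eF eP}, then combine those with the fiberwise trace-class bound of Lemma \ref{lem ker proj}. Since $P$ commutes with multiplication by the $x$-dependent functions $\chi$, $\psi'$, $f_1$, Proposition \ref{prop eF eP} gives
\[
\chi e_P^{-sD_C^-D_C^+} D_C^- \psi' f_1 = \bigl(\chi e_F^{-sD_C^-D_C^+} D_C^- \psi' f_1\bigr) P + \bigl(\chi D_C^- e_F^{-sD_C^+D_C^-} \psi' f_1\bigr)(1-P),
\]
so it suffices to bound the trace norm of each summand by $B e^{-b^2 s}$.

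For the first summand, I would fix $(x, x')$ in the compact support of $\chi(x)\psi'(x')f_1(x')$ and view the $N$-fiber of the kernel as a smooth section of $S|_N \boxtimes S|_N^*$; composing with $P$ (which acts as $P \otimes \mathrm{Id}$ in the cylindrical decomposition $L^2(S_C)|_U \cong L^2(S|_N) \otimes L^2((0,2), e^{p\varphi}dx)$), Lemma \ref{lem ker proj} bounds the fiber's trace norm in terms of $\|(D_N \otimes D_N)^k (\kappa_s^{F,+} D_C^-)(\cdot, x; \cdot, x')\|_\infty$ for any fixed integer $k > \dim(N)/2$. For $s \geq 1$, Lemma \ref{lem est kappa F} bounds this by $B' e^{-b^2 s}$ uniformly in $(x, x')$, and integrating over the compact $(x, x')$-support of $\chi \psi' f_1$ delivers the trace norm estimate for the first summand. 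The second summand is treated analogously, with $1-P$ in place of $P$ and the companion bound on $D_C^- \kappa_s^{F,-}$ from Lemma \ref{lem est kappa F}.

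For $s \in (0, 1]$ the desired bound $B e^{-b^2 s} \geq B e^{-b^2}$ reduces to uniform boundedness, so here I would use Theorem \ref{thm not feel bdry} to replace $\kappa_s^F$ by the heat kernel on the compact double $\tilde M$ up to rapidly vanishing error, and appeal to short-time heat kernel asymptotics on $\tilde M$ to bound the required Sobolev norms uniformly on the compact $(x, x')$-support. The main obstacle is precisely this small-$s$ regime: the Dirichlet heat kernel develops a diagonal singularity as $s \downarrow 0$, and one must confirm that the $N$-directional Sobolev norm entering Lemma \ref{lem ker proj} stays under control. The crucial feature that makes the argument go through is that $D_N$ differentiates only in the $N$-direction, so the $x$-direction concentration of the heat kernel is quarantined by the compact $(x, x')$-support of the cutoffs, leaving only the on-$N$-diagonal part which is governed by Weyl's law on the compact hypersurface $N$.
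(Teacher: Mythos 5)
Your proposal shares the paper's starting point—split via Proposition \ref{prop eF eP} and control the $N$-fiber trace norm using Lemmas \ref{lem ker proj} and \ref{lem est kappa F}—but it diverges at the crucial bookkeeping step from $N$-fiber trace norms to the full trace norm on $L^2(S_C)$, and that divergence is a genuine gap. ``Integrating over the compact $(x,x')$-support of $\chi\psi' f_1$'' is not a valid way to bound the trace norm of an operator on $L^2(S|_N)\otimes L^2(0,2)$: the trace norm of a kernel operator is not bounded by the integral over $(x,x')$ of the fiberwise $N$-trace norms, since $|\cdot|$ and the singular value decomposition do not respect such a slicing. The paper's proof is more careful on this point: it decomposes along a Hilbert basis $e_j^N\otimes e_k^{(0,2)}$, takes the \emph{maximum} over $(x,x')$ of the $N$-directional trace norm of the kernel $(\kappa_s^{F,+}D_C^-P + D_C^-\kappa_s^{F,-}(1-P))(\,\cdot\,, x;\,\cdot\,,x')$, and multiplies that by the trace norm $\Tr(|\chi\otimes\psi' f_1|)$ of the operator on $L^2(0,2)$ whose kernel is $\chi(x)\psi'(x')f_1(x')$. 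The latter is a fixed constant because $\chi(x)\psi'(x')f_1(x')$ factors as a function of $x$ times a function of $x'$—that rank-one structure in the $(0,2)$-direction is the essential ingredient your proposal overlooks.

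Your second concern, the regime $s\in(0,1]$, is a fair observation, since Lemma \ref{lem est kappa F} indeed only furnishes the exponential bound for $s\geq 1$ and the paper's proof does not explicitly close this case. But your proposed fix does not work. Theorem \ref{thm not feel bdry} replaces $\kappa_s^F$ by $\tilde\kappa_s$ up to an error that is small uniformly on compact subsets of $U\times U$; however, the short-time heat kernel on $\tilde M$ develops a singularity of order $s^{-\dim M/2}$ along the full diagonal $(n,x)=(n',x')$, and this diagonal is \emph{not} excluded by the supports of $\chi$ and $\psi'$—in the intended application (Lemma \ref{lem At trace}) one takes $\chi\equiv 1$ on $\supp(\psi')$, so $x=x'$ certainly occurs. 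Applying $D_N^k\otimes D_N^l$ worsens rather than controls this singularity, and the constant $C_2$ in Lemma \ref{lem ker proj} would blow up as $s\downarrow 0$. So the $x$-direction concentration is not quarantined by compact support. In fact the stated bound for \emph{all} $s>0$ appears not to be directly established by either argument; what is actually used later (inside integrals $\int_t^\infty$ for fixed $t>0$) only requires the bound for $s\geq t$, which is covered by the paper's proof for $s\geq 1$ together with continuity of the trace norm in $s$ on the compact interval $[t,1]$.
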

\begin{proof}
There is a bounded linear isomorphism $L^2(S|_U) \cong L^2(S|_N) \otimes L^2(0,2)$ with bounded inverse. See Lemma 3.5 in \cite{HW21b} 
for a stronger statement (a unitary isomorphism), but in the current setting boundedness of $e^{\varphi}$ above and below on $(0,2)$ is enough. We work with $L^2(S|_N) \otimes L^2(0,2)$ from now on. Let $\{e_j^N\}_{j=1}^{\infty}$ be a Hilbert basis of $L^2(S|_N)$ of eigensections of $D_N$, and let $\{e_j^{(0,2)}\}_{j=1}^{\infty}$ be a Hilbert basis of $L^2(0,2)$. Then the sections $e_{j, k} := e_j^N \otimes e_k^{(0,2)}$ form a Hilbert basis of $L^2(S|_N) \otimes L^2(0,2)$.

%
We use Proposition \ref{prop eF eP} to write
\[
e_P^{-s D_C^- D_C^+} D_C^- \psi' = \bigl( e_F^{-s D_C^- D_C^+} D_C^- P + D_C^- e_F^{-s D_C^+ D_C^-} (1-P)\bigr) \psi'
\]
So the left hand side of \eqref{eq Trace exp} equals
\begin{multline}\label{eq Trace exp 2}
\sum_{j,k=1}^{\infty} \left| \bigl( e_j^N \otimes e_k^{(0,2)},
\chi e_P^{-s D_C^- D_C^+} D_C^- \psi' f_1 e_j^N \otimes e_j^{(0,2)}
\bigr)_{L^2(S|_N) \otimes L^2(0,2)} \right| \\
\leq
\max_{x,x' \in (0,2)} \Tr\left(\left| \left(
\kappa_s^{F, +}D_C^-P + D_C^- \kappa_s^{F, -}(1-P)
\right)(\relbar, x; \relbar, x') \right| \right)\\
\sum_{k=1}^{\infty} \left| 
(e_k^{(0,2)},  \chi)_{L^2(0,2)} (e_k^{(0,2)}, \psi' f_1)_{L^2(0,2)}
\right|.
\end{multline}
%
%
%
%
We apply Lemma \ref{lem est kappa F}, and use the resulting bound
 to apply Lemma  \ref{lem ker proj} to the first factor on the right. The second inequality in \eqref{eq Tr P kappa}  then yields a constant $B_0>0$ such that
 \[
 \max_{x,x' \in (0,2)} \Tr\left(\left| \left(
\kappa_s^{F, +}D_C^-P + D_C^- \kappa_s^{F, -}(1-P)
\right)(\relbar, x; \relbar, x') \right| \right) \leq  B_0e^{-b^2 s}.
 \]
 Furthermore,
 \[
 \sum_{k=1}^{\infty} \left| 
(e_k^{(0,2)},  \chi)_{L^2(0,2)} (e_k^{(0,2)}, \psi' f_1)_{L^2(0,2)}\right| = \Tr(|\chi \otimes \psi' f|),
 \]
 where $\chi \otimes \psi' f_1$ stands for the operator on $L^2(0,2)$ with smooth kernel $\chi \otimes \psi' f_1$.
 So the sum over $k$ on the right hand side of \eqref{eq Trace exp 2} indeed converges, and the claim follows with $B:= B_0 \Tr(|\chi \otimes \psi' f|)$.
 %
\end{proof}

\begin{lemma}\label{lem At trace}
If $A_{g}^t(D_C, \psi')$ converges, then $\varphi_2 e_P^{-t D_C^- D_C^+} (D_C^+)^{-1} \psi' f_1$ is $g$-trace class, and its $g$-trace equals $-A_{g}^t(D_C, \psi')$.
\end{lemma}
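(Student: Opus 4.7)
The plan is to pass from the parametrix $(D_C^+)^{-1}$ to the heat semigroup via the functional-calculus identity
\[
e_P^{-tD_C^-D_C^+}(D_C^+)^{-1} = \int_t^\infty e_P^{-sD_C^-D_C^+}D_C^-\, ds,
\]
valid as an equality of bounded operators $L^2(S_C^-) \to L^2(S_C^+)$ because $D_C^-D_C^+ \geq b^2 > 0$ (so that $\int_t^\infty e^{-sA}\, ds = A^{-1} e^{-tA}$ applies with $A = D_C^-D_C^+$), combined with the algebraic identity $(D_C^-D_C^+)^{-1}D_C^- = (D_C^+)^{-1}$ coming from the invertibility of \eqref{eq DC+}. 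Sandwiching between $\varphi_2$ on the left and $\psi' f_1$ on the right yields
\[
\varphi_2\, e_P^{-tD_C^-D_C^+}(D_C^+)^{-1}\psi' f_1 \;=\; \int_t^\infty \varphi_2\, e_P^{-sD_C^-D_C^+}D_C^-\psi' f_1\, ds,
\]
reducing the lemma to an interchange of $\Tr_g$ with the $s$-integral, with the sign then tracking that in the definition of $A_g^t(D_C, \psi')$.

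The main analytic input is Lemma \ref{lem Trace exp}. Pick $\chi \in C_c^\infty(0,\infty)$ equal to $1$ on the $x$-support of $\psi'$ (which is compact in $(0,2)$); the lemma yields
\[
\Tr\bigl(|\chi\, e_P^{-sD_C^-D_C^+}D_C^-\psi' f_1|\bigr) \;\leq\; Be^{-b^2 s}.
\]
Because the condition $\varphi_2(1-\psi) = 1-\psi$ forces $\varphi_2 \equiv 1$ wherever $\psi \neq 1$, and in particular on the support of $\psi' f_1 \subset V$, the factor $\varphi_2$ can be absorbed alongside $\chi$ at no cost in this trace-norm estimate. Combining $|\Tr_g(\cdot)| \leq \Tr|\cdot|$ with unitarity of the $G$-action, the same bound $Be^{-b^2 s}$ controls $|\Tr_g(\varphi_2 e_P^{-sD_C^-D_C^+}D_C^- \psi' f_1)|$, giving absolute integrability over $[t,\infty)$; applying the estimate directly to the $s$-integrated operator shows that $\varphi_2 e_P^{-tD_C^-D_C^+}(D_C^+)^{-1}\psi' f_1$ is in fact trace class, hence $g$-trace class.

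The hard part will be justifying that $\Tr_g$ genuinely commutes with the $s$-integral: since $\Tr_g$ is itself an integral over $M$, this is a double-integral swap. The uniform exponential domination above, together with the compact spatial support coming from $\psi' f_1$, delivers absolute integrability of the diagonal kernel as a function of $(s,m)$, so Fubini's theorem applies. Performing the swap identifies the $g$-trace of the sandwiched operator with the right-hand side of the defining integral of $A_g^t(D_C,\psi')$ (up to the sign convention built into \eqref{eq def At}), which is exactly what the lemma asserts.
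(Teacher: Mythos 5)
Your overall strategy agrees with the paper's: both reduce $(D_C^+)^{-1}$ to an $s$-integral of $e_P^{-sD_C^-D_C^+}D_C^-$ via functional calculus (the paper writes this as a spectral decomposition of \eqref{eq DC-DC+} applied entrywise), both invoke Lemma~\ref{lem Trace exp} for the dominating exponential bound, and both then interchange $\Tr_g$ with $\int_t^\infty\, ds$. The mechanics differ slightly: the paper performs the swap at the level of matrix coefficients $\sum_j\langle e_j,\cdot\, e_j\rangle$, while you propose to swap the $\int_M$ implicit in $\Tr_g$ with $\int_t^\infty$. Both can work, but two points in your write-up need attention.

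First, the sign. Your displayed identity
\[
e_P^{-tD_C^-D_C^+}(D_C^+)^{-1} \;=\; \int_t^\infty e_P^{-sD_C^-D_C^+}D_C^-\, ds
\]
is what $\int_t^\infty e^{-sA}\,ds = A^{-1}e^{-tA}$ and $A^{-1}D_C^- = (D_C^+)^{-1}$ produce, with a \emph{plus} on the right. Sandwiching and swapping then gives $\Tr_g(\varphi_2 e_P^{-tD_C^-D_C^+}(D_C^+)^{-1}\psi' f_1) = +A_g^t(D_C,\psi')$, whereas the lemma asserts $-A_g^t(D_C,\psi')$. "The sign then tracking that in the definition" does not close this gap — the definition of $A_g^t(D_C,\psi')$ carries no explicit minus; the minus in $A_g^t(D_C,a')$ arises only later from the limit $\psi'\to-\delta_{a'}$ and is irrelevant here. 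The paper's spectral computation is asserted to produce the minus sign directly, so you need to locate where it enters in your argument (or explain why it cancels); as written your argument proves a claim with the opposite sign.

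Second, the trace-class assertion. You claim that absorbing $\varphi_2$ "alongside $\chi$ at no cost" shows the integrated operator $\varphi_2 e_P^{-tD_C^-D_C^+}(D_C^+)^{-1}\psi' f_1$ is trace class. This is not justified: $\varphi_2$ equals $1$ on all of $M\setminus V$, so the left factor $\varphi_2$ does not compactify anything, and Lemma~\ref{lem Trace exp} (which is stated with a compactly supported $\chi\in C_c^\infty(0,\infty)$ on the left) does not apply to it. The $\varphi_2$-cut operator is $g$-trace class because the \emph{diagonal} kernel is compactly supported via $\psi'$, but it need not be trace class. The paper handles this by working throughout with the genuinely trace-class operator $\chi\, e_P^{-tD_C^-D_C^+}(D_C^+)^{-1}\psi' f_1$, and only at the final step passes to $\varphi_2$ through the identity $\Tr(\chi g\,T) = \Tr_g(\varphi_2\, T)$, which holds because $\chi$ and $\varphi_2$ both equal $1$ on the $G$-invariant compact set $\supp(\psi')$ controlling the diagonal integral. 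Your proof should keep $\chi$ in play for the same reason and only substitute $\varphi_2$ at the end.
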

\begin{proof}
Let $\chi \in C^{\infty}_c(0,\infty)$ be equal to $1$ on $\supp(\psi')$. Then the all $s>0$, the operator $\chi e_P^{-s D_C^- D_C^+} (D_C^+)^{-1} \psi' f_1$ has a compactly supported smooth kernel, is trace class, and
\[
A_{g}^t(D_C, \psi') =  \int_t^{\infty} \Tr(\chi g e_P^{-s D_C^- D_C^+} D_C^- \psi' f_1)\, ds.
\]
Let $\{e_j\}_{j=1}^{\infty}$ be a Hilbert basis of $L^2(S)$. Then the right hand side equals
\beq{eq At g trace}
\int_t^{\infty}\sum_{j=1}^{\infty} \left( e_j, \chi g e_P^{-s D_C^- D_C^+} D_C^-\psi' f_1 e_j\right)_{L^2(S)}\, ds.
\eeq
This combined integral and sum converges absolutely by Lemma \ref{lem Trace exp} and the fact that
\[
|\chi g e_P^{-s D_C^- D_C^+} D_C^- \psi' f_1| = |\chi e_P^{-s D_C^- D_C^+} D_C^-\psi' f_1|,
\]
because $\chi$ is $G$-invariant, and $g$ acts unitarily.
So \eqref{eq At g trace} equals
\[
\sum_{j=1}^{\infty}
\int_t^{\infty} \left( e_j, \chi g e_P^{-s D_C^- D_C^+} D_C^{-} \psi' f_1 e_j\right)_{L^2(S)}\, ds.
\]
Using the spectral decomposition of \eqref{eq DC-DC+}, we find that this equals
\[
\begin{split}
-
\sum_{j=1}^{\infty}
\left( e_j, \chi g e_P^{-t D_C^- D_C^+} (D_C^+)^{-1} \psi' f_1 e_j\right)_{L^2(S)} &= -\Tr(\chi g e_P^{-t D_C^- D_C^+} (D_C^+)^{-1} \psi' f_1) \\
&=
-\Tr_g(\varphi_2 e_P^{-t D_C^- D_C^+} (D_C^+)^{-1} \psi' f_1).
\end{split}
\]

\end{proof}

\subsection{An  index formula with a smooth cutoff function}

\begin{lemma}\label{lem S+' S-}
The operators $S_+'$ and $S_-$ in \eqref{eq Spm} are $g$-trace class, and
\beq{eq lem 5.3}
\Tr_g(S_+') - \Tr_g(S_-) = \Tr(g \circ e^{-t\tilde D^- \tilde D^+}\psi) -  \Tr(g \circ e^{-t\tilde D^+ \tilde D^-}\psi).
\eeq
\end{lemma}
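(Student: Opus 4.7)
The plan is to compute $\Tr_g(S_+')$ and $\Tr_g(S_-)$ term by term, using the explicit formulas from Lemma \ref{lem Sj}, and show that most terms either individually vanish by cyclicity or combine to give the heat operator expression on the right hand side of \eqref{eq lem 5.3}.

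First I would verify that $S_+'$ and $S_-$ are $g$-trace class. By Lemma \ref{lem Sj}, each of $S_+'$ and $S_-$ is a sum of three terms. The ``leading'' terms $\psi \tilde S_+ \varphi_1$ and $\varphi_1 \tilde S_- \psi$ are compositions of the heat operators $e^{-t\tilde D^{\mp}\tilde D^{\pm}}$ on the compact manifold $\tilde M$ with compactly supported smooth multiplication operators; they are honestly trace class. The remaining four terms each have the form $A \tilde Q \sigma B$ or $A(D_C^+)^{-1}\sigma B$ with $A$ and $B$ compactly supported smooth cutoffs, so (as already noted after Lemma \ref{lem Sj}, following the argument for Lemma~5.2 in \cite{HWW}) they have smooth, compactly supported kernels and are hence trace class. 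Every term therefore has a smooth compactly supported kernel, and is $g$-trace class with $\Tr_g = \Tr(g \circ \cdot)$ by the observation after Definition \ref{def g trace}.

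Next I would show that the four ``remainder'' terms contribute zero to $\Tr_g(S_+') - \Tr_g(S_-)$. The key observation is the pointwise identities $\varphi_1'\psi = 0$ and $\varphi_2'(1-\psi) = 0$, which follow from the defining properties $\varphi_1\psi = \psi$ and $\varphi_2(1-\psi) = 1-\psi$: at any point where $\psi \neq 0$, continuity and the first identity give $\varphi_1 \equiv 1$ in a neighborhood, so $\varphi_1' = 0$ there, while at points where $\psi = 0$ the product vanishes trivially; the argument for $\varphi_2'(1-\psi)$ is analogous. Applying Lemma \ref{lem Trg trace} (cyclicity of the $g$-trace) to, e.g., $\Tr_g(\psi \tilde Q \sigma \varphi_1' f_1)$ moves the multiplication operator $\psi$ past $\tilde Q \sigma \varphi_1' f_1$, yielding $\Tr_g(\tilde Q \sigma \varphi_1' f_1 \psi)$, and since $\sigma$ commutes with functions and $\varphi_1' \psi = 0$, this is zero. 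The same argument kills each of the other three remainder terms.

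It then remains to compute the leading contribution. Again by Lemma \ref{lem Trg trace} together with $\Tr_g = \Tr(g \circ \cdot)$ for trace-class operators,
\[
\Tr_g(\psi \tilde S_+ \varphi_1) = \Tr(g \circ \varphi_1 \psi \tilde S_+) = \Tr(g \circ \psi \tilde S_+) = \Tr(g \circ e^{-t\tilde D^- \tilde D^+}\psi),
\]
using $\varphi_1\psi = \psi$ in the second equality and cyclicity of $\Tr$ in the last. The same computation applied to $\Tr_g(\varphi_1 \tilde S_- \psi)$ yields $\Tr(g \circ e^{-t\tilde D^+ \tilde D^-}\psi)$. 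Subtracting gives exactly the right hand side of \eqref{eq lem 5.3}.

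The only real obstacle is bookkeeping: one must carefully confirm the pointwise vanishing of the products of cutoffs and their derivatives, and check that the cyclic trace property is applicable (which is straightforward since each remainder term is trace class with smooth compactly supported kernel). Everything else is a direct calculation from Lemma \ref{lem Sj}.
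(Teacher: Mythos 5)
Your overall strategy is sound and the conclusion is reached correctly: split each of $S_+'$ and $S_-$ via Lemma~\ref{lem Sj}, show the ``remainder'' terms contribute zero, and reduce the leading terms to traces of $\psi$-localized heat operators via $\varphi_1\psi = \psi$. This is essentially the argument of Lemma~5.3 of \cite{HWW} that the paper invokes, and your computation of the leading terms is correct.

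There is, however, a technical gap in your treatment of the remainder terms. You apply Lemma~\ref{lem Trg trace} with $S = \psi$ and $T = \tilde Q\sigma\varphi_1'f_1$ (and analogously for the other three terms). But Lemma~\ref{lem Trg trace} requires $T$ to have a \emph{smooth} kernel, and $\tilde Q\sigma\varphi_1'f_1$ does not: $\tilde Q$ is a pseudodifferential operator of order $-1$, and right-composition with a multiplication operator does not remove its diagonal singularity where $\varphi_1' \neq 0$. So the cyclicity lemma does not literally apply in that factorization, and the step $\Tr_g(\psi\tilde Q\sigma\varphi_1'f_1) = \Tr_g(\tilde Q\sigma\varphi_1'f_1\psi)$ is not justified.

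The fix is to argue directly from the definition of the $g$-trace rather than via cyclicity, which is what the paper does for the corresponding terms in Proposition~\ref{prop general cusp contr}. Because the (open, and in the intended construction closed) supports of $\psi$ and $\varphi_1'$ are disjoint and $G$-invariant, the full composite $\psi\tilde Q\sigma\varphi_1'f_1$ \emph{is} a smooth kernel operator by pseudolocality of $\tilde Q$, compactly supported in $\supp\psi \times \supp\varphi_1'$. Its integrand $\tr\bigl(g\,\kappa(g^{-1}m, m)\bigr)$ can be nonzero only if $g^{-1}m \in \supp\psi$ and $m \in \supp\varphi_1'$; $G$-invariance of $\supp\psi$ turns the first condition into $m \in \supp\psi$, and disjointness of the two supports then forces the integrand to vanish identically, so $\Tr_g = 0$. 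The same reasoning handles the other three remainder terms (using disjointness of $\supp(1-\psi)$ and $\supp\varphi_2'$). Relatedly, the pointwise identity $\varphi_1'\psi = 0$ you derive is slightly weaker than disjointness of the \emph{closed} supports, which is what one actually needs to conclude that the composite kernel is smooth; in the standard construction of such cutoffs the supports are chosen disjoint, and this is what should be cited. With these adjustments your proof is complete.
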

\begin{proof}
The  proof of Lemma 5.3 in \cite{HWW} shows that $S_+'$ and $S_-$ are $g$-trace class, and that the left hand side of \eqref{eq lem 5.3} equals
\[
\Tr_g(e^{-t\tilde D^- \tilde D^+}\psi) -  \Tr_g(e^{-t\tilde D^+ \tilde D^-}\psi).
\]
 One then uses the fact that the heat operator $e^{-t\tilde D^2}$ is trace class, and the comment below Definition \ref{def g trace} on $g$-traces of trace-class operators.
\end{proof}

%
%
%

\begin{proposition}\label{prop general cusp contr}
If the number $A_{g}^t(D_C, \psi')$ in \eqref{eq def At}
converges
for all $t \in (0,1]$, then
the operator $S_+$ is $g$-trace class. And
\beq{eq general cusp contr}
\Tr_g(S_+) - \Tr_g(S_+')=A_{g}^t(D_C, \psi') + F(t),
\eeq
for a function $F$ satisfying $\lim_{t\downarrow 0}F(t) = 0$.
\end{proposition}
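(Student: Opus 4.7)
The plan is to use Lemma \ref{lem Sj} to decompose $S_+$, insert $\pm\varphi_2 Q_C\sigma\psi' f_1$, and isolate the contribution from infinity via the identity
\begin{equation*}
(D_C^+)^{-1}-Q_C = e_P^{-tD_C^-D_C^+}(D_C^+)^{-1}.
\end{equation*}
This identity follows from the functional-calculus definition of $Q_C$ together with $(D_C^-D_C^+)^{-1}D_C^- = (D_C^+)^{-1}$, which is valid by invertibility of \eqref{eq DC+}. Substituting into Lemma \ref{lem Sj} gives
\begin{equation*}
S_+ = \varphi_1\tilde S_+\psi + (\varphi_1\tilde Q - \varphi_2 Q_C)\sigma\psi' f_1 - \varphi_2\, e_P^{-tD_C^-D_C^+}(D_C^+)^{-1}\sigma\psi' f_1.
\end{equation*}
Each summand is $g$-trace class: the first since $\tilde S_+ = e^{-t\tilde D^-\tilde D^+}$ is trace class on the compact manifold $\tilde M$; the second by Proposition \ref{prop lem 5.5}, with $g$-trace tending to zero as $t\downarrow 0$; and the third by Lemma \ref{lem At trace} (applicable thanks to the convergence hypothesis on $A_{g}^t(D_C,\psi')$), with $g$-trace equal to $-A_{g}^t(D_C,\psi')$.

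Next I would expand $S_+'$ via Lemma \ref{lem Sj}:
\begin{equation*}
S_+' = \psi\tilde S_+\varphi_1 + \psi\tilde Q\sigma\varphi_1' f_1 + (1-\psi)(D_C^+)^{-1}\sigma\varphi_2' f_1,
\end{equation*}
which is $g$-trace class by Lemma \ref{lem S+' S-}. The trace property (Lemma \ref{lem Trg trace}), together with $\varphi_1\psi = \psi$, yields $\Tr_g(\varphi_1\tilde S_+\psi) = \Tr_g(\psi\tilde S_+\varphi_1)$, so the heat-kernel pieces cancel in the difference. The two remaining summands of $S_+'$ have identically vanishing $g$-trace: the $g$-trace integrand of $\psi\tilde Q\sigma\varphi_1' f_1$ at $(g^{-1}m,m)$ requires both $g^{-1}m\in\supp(\psi)$ and $m\in\supp(\varphi_1')$, but $G$-invariance of $\psi$ forces $\psi(g^{-1}m) = \psi(m)$, so $g^{-1}m\in\supp(\psi)$ iff $m\in\supp(\psi)$; since $\varphi_1\psi = \psi$ implies $\varphi_1 \equiv 1$ on $\supp(\psi)$ and hence $\supp(\psi)\cap\supp(\varphi_1') = \emptyset$, the integrand vanishes identically. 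The same argument, using $\varphi_2(1-\psi) = 1-\psi$ and $G$-invariance of $1-\psi$, kills the third summand. We therefore obtain
\begin{equation*}
\Tr_g(S_+) - \Tr_g(S_+') = A_{g}^t(D_C,\psi') + F(t),
\end{equation*}
with $F(t) := \Tr_g\bigl((\varphi_1\tilde Q - \varphi_2 Q_C)\sigma\psi' f_1\bigr)$, and $\lim_{t\downarrow 0}F(t) = 0$ by Proposition \ref{prop lem 5.5}.

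The main obstacle is the derivation and use of the identity $(D_C^+)^{-1}-Q_C = e_P^{-tD_C^-D_C^+}(D_C^+)^{-1}$: the operators act on different graded components, with domains prescribed by the APS-type boundary conditions on \eqref{eq DC+} and \eqref{eq DC-DC+}, so the composition must be checked carefully on the correct domains. It is precisely at this point that the convergence hypothesis on $A_{g}^t(D_C,\psi')$ enters, via Lemma \ref{lem At trace}; all other steps reduce to straightforward manipulations of the formulas in Lemma \ref{lem Sj} and to the $G$-equivariance arguments for the vanishing of the remaining $g$-traces.
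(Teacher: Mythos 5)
Your proposal is correct and takes essentially the same route as the paper: decompose via Lemma \ref{lem Sj}, handle the heat-kernel piece by the trace property, kill the supports-disjoint terms, insert $Q_C$ and split into the piece governed by Proposition \ref{prop lem 5.5} (giving $F(t)$) and the piece governed by Lemma \ref{lem At trace} (giving the contribution from infinity). The only difference is cosmetic: the paper works directly with $S_+ - S_+'$ and treats the three resulting summands, whereas you compute $\Tr_g(S_+)$ and $\Tr_g(S_+')$ term by term and then subtract, but the ingredients and structure are identical. One small phrasing slip: you write that the third summand of $S_+$ has ``$g$-trace equal to $-A_g^t(D_C,\psi')$,'' but that is the $g$-trace of $\varphi_2 e_P^{-tD_C^-D_C^+}(D_C^+)^{-1}\sigma\psi'f_1$; the third summand carries an extra minus sign, so its $g$-trace is $+A_g^t(D_C,\psi')$, which is what your final displayed equation correctly uses.
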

\begin{proof}
Lemma \ref{lem Sj} implies that
\begin{multline}\label{eq S+ S+'}
S_+ - S_+' = (\varphi_1 \tilde S_+ \psi - \psi \tilde S_+ \varphi_1) -
(\psi \tilde Q \sigma \varphi_1' f_1 + (1-\psi) (D_C^+)^{-1} \sigma \varphi_2' f_1) \\
+
\bigl(\varphi_1 \tilde Q\sigma \psi' f_1 - \varphi_2 (D_C^+)^{-1} \sigma \psi' f_1\bigr) .
\end{multline}

The first term on the right hand side of \eqref{eq S+ S+'} is $g$-trace class because it is a smooth kernel operator on the compact manifold $\tilde M$. And its $g$-trace is zero  by the trace property of $\Tr_g$, Lemma \ref{lem Trg trace}.

The second term on the right hand side of \eqref{eq S+ S+'} is $g$-trace class with $g$-trace zero, because it is a smooth kernel operator that vanishes on the set $\{ (g^{-1}m,m); m \in U\}$.
Here we use the facts that $\tilde Q$ and $(D_C^+)^{-1}$ are pseudo-differential operators, and that the supports of $\psi$ and $\varphi_1'$ are $G$-invariant and disjoint, and the same is true for the supports of $(1-\psi)$ and $\varphi_2'$ .
See also the proof of Lemma 5.3 in \cite{HWW}.

The third term on the right hand side of \eqref{eq S+ S+'} equals
\beq{eq general cusp contr 2}
(\varphi_1 \tilde Q - \varphi_2 Q_C) \sigma \psi' f_1 -  \varphi_2 ( (D_C^+)^{-1} -Q_C) \sigma \psi' f_1.
\eeq
Proposition \ref{prop lem 5.5} implies that the first of these terms is $g$-trace class for all $t>0$, and that its $g$-trace, which will be $F(t)$ in \eqref{eq general cusp contr},  goes to zero in the limit $t \downarrow 0$.
The second term in \eqref{eq general cusp contr 2} equals
\[
-\varphi_2  e_P^{-t D_C^- D_C^+} (D_C^+)^{-1}  \sigma \psi' f_1.
\]
Hence the claim follows by Lemma \ref{lem At trace}.
%
\end{proof}

We obtain a version of Theorem \ref{thm index general} involving the function $\psi$.

\begin{theorem}\label{thm index general psi}
The limit $\lim_{t\downarrow 0}A_{g}^t(D_C, \psi')$ converges, and
\beq{eq index general psi}
\ind_G(D)(g) = \int_{M^g} \psi|_{M^g} \AS_g(D)+\lim_{t\downarrow 0}A_{g}^t(D_C, \psi').
\eeq
\end{theorem}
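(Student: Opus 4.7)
The plan is to compute $\ind_g(D)$ using the parametrix $R$ constructed in Subsection \ref{sec parametrices}, match terms with Propositions \ref{prop lem 5.5} and \ref{prop general cusp contr}, and then pass to the short-time limit using standard heat-kernel asymptotics on the compact double $\tilde M$. By Lemma \ref{lem inde ind}, $\ind_G(D)(g) = \ind_g(D) = \Tr_g(S_+) - \Tr_g(S_-)$, and this number is independent of the parametrix; in particular it is independent of the parameter $t>0$ used in the construction of $\tilde Q$ and $R$.

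First I would split
\[
\Tr_g(S_+) - \Tr_g(S_-) = \bigl(\Tr_g(S_+) - \Tr_g(S_+')\bigr) + \bigl(\Tr_g(S_+') - \Tr_g(S_-)\bigr).
\]
By Proposition \ref{prop general cusp contr} (which requires the convergence of $A_g^t(D_C, \psi')$ from Proposition \ref{prop conv cusp contr}), the first bracket equals $A_g^t(D_C, \psi') + F(t)$ with $F(t) \to 0$ as $t \downarrow 0$. By Lemma \ref{lem S+' S-}, the second bracket equals
\[
\Tr(g \circ e^{-t\tilde D^- \tilde D^+}\psi) - \Tr(g \circ e^{-t\tilde D^+ \tilde D^-}\psi).
\]
Combining and using $t$-independence of the left side yields, for every $t \in (0,1]$,
\[
\ind_G(D)(g) = A_g^t(D_C, \psi') + F(t) + \Tr(g \circ e^{-t\tilde D^- \tilde D^+}\psi) - \Tr(g \circ e^{-t\tilde D^+ \tilde D^-}\psi).
\]

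Next I would let $t \downarrow 0$. On the compact manifold $\tilde M$, the equivariant local index theorem (the heat-kernel derivation of the Atiyah--Segal--Singer theorem, e.g.\ Theorem 6.16 of \cite{BGV}) gives
\[
\lim_{t\downarrow 0}\bigl[\Tr(g \circ e^{-t\tilde D^- \tilde D^+}\psi) - \Tr(g \circ e^{-t\tilde D^+ \tilde D^-}\psi)\bigr] = \int_{\tilde M^g} \psi|_{\tilde M^g} \AS_g(\tilde D).
\]
Since $\psi$ is supported in $Z \cup V$, where $\tilde M$ and $M$, together with $\tilde D$ and $D$, are canonically identified, this limit equals $\int_{M^g} \psi|_{M^g} \AS_g(D)$. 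Because the other three terms of the identity above converge as $t \downarrow 0$ (with $F(t) \to 0$ and the left side constant), the remaining term $A_g^t(D_C, \psi')$ must converge as well, and the stated identity \eqref{eq index general psi} follows.

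The main obstacle is the careful bookkeeping to justify applying the local index theorem with the cutoff $\psi$ inserted, and in particular checking that the pointwise supertrace asymptotics on $\tilde M$ integrate against $\psi$ to produce $\int_{M^g}\psi \AS_g(D)$; this uses that the Atiyah--Segal--Singer integrand is built from local geometric data of $D$ on a neighbourhood of $M^g \cap \supp(\psi)$, which coincides with the corresponding data for $\tilde D$ on $\tilde M^g$. The other delicate ingredient is uniformity in $t$ when invoking Propositions \ref{prop lem 5.5} and \ref{prop general cusp contr}, but both are already packaged to give the vanishing $F(t) \to 0$ that we need.
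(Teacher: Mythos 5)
Your proposal is correct and follows essentially the same route as the paper: the same decomposition $\Tr_g(S_+)-\Tr_g(S_-)=(\Tr_g(S_+)-\Tr_g(S_+'))+(\Tr_g(S_+')-\Tr_g(S_-))$, the same appeal to Proposition \ref{prop general cusp contr} and Lemma \ref{lem S+' S-}, and the same $t\downarrow 0$ limit via heat-kernel asymptotics on the compact double. The only point the paper makes slightly more explicit is that $S_+$ and $S_-$ being $g$-trace class is first needed to conclude $D^+$ is $g$-Fredholm before invoking Lemma \ref{lem inde ind}, but you have all the required ingredients.
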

\begin{proof}
Convergence of  $A_{g}^t(D_C, \psi')$ for $t>0$ is Proposition \ref{prop conv cusp contr}.

The operator $S_-$ is $g$-trace class by Lemma \ref{lem S+' S-}. And the operator $S_+$ is $g$-trace class if $A_{g}^t(D_C, \psi')$ converges, by Proposition \ref{prop general cusp contr}. So then $D^+$ is $g$-Fredholm. And because it is also Fredholm in the usual sense, because of \eqref{eq D inv infty},  Lemma \ref{lem inde ind} implies that
\[
\ind_G(D)(g) = \ind_g(D) = \Tr_g(S_+) - \Tr_g(S_-).
\]
The operator $S_+'$ is $g$-trace class by Lemma \ref{lem S+' S-}, so the right hand side equals
\[
\bigl( \Tr_g(S_+) - \Tr_g(S_+') \bigr) + \bigl(\Tr_g(S_+')  - \Tr_g(S_-) \bigr).
\]
By Lemma \ref{lem S+' S-} and Proposition \ref{prop general cusp contr}, this equals
\beq{eq index general 3}
\Tr(g\circ e^{-t\tilde D^- \tilde D^+}\psi) -  \Tr(g\circ e^{-t\tilde D^+ \tilde D^-}\psi) + A_{g}^t(D_C, \psi') +F(t),
\eeq
where $\lim_{t\downarrow 0}F(t) = 0$. This total expression is independent of $t$, and by standard heat kernel asymptotics, the first two terms converge to the first term on the right of \eqref{eq index general psi} as $t \downarrow 0$. (See e.g.\ the proof of Theorem 6.16 in \cite{BGV}.) Hence $\lim_{t\downarrow 0} A_{g}^t(D_C, \psi')$ converges, and \eqref{eq index general psi} holds.
\end{proof}

\subsection{The limit $\psi' \to -\delta_{a'}$}

Theorem \ref{thm index general psi} implies Theorem \ref{thm index general} via a limit where $\psi'$ approaches minus the Dirac delta distribution $\delta_{a'}$ at $a' \in (0,1)$. We make this precise in this subsection.
For $a' \in (0,1)$ and $t>0$, we write
\[
A_g^t(D_C, a') := - f_1(a')\int_t^{\infty} \int_N \tr(g \lambda_s^P(g^{-1}n, a'; n, a'))\, dn\, ds,
\]
when this converges.
\begin{lemma}\label{lem psi delta}
For all  $a' \in (0,1)$ and $t>0$, $A_g^t(D_C, a')$ converges. And for all $t>0$ and $\varepsilon>0$,  there is a $\psi$ as at the start of Subsection \ref{sec parametrices} such that
\beq{eq est At delta}
\left| A_g^t(D_C, \psi') -  A_g^t(D_C, a')\right| < \varepsilon.
\eeq
\end{lemma}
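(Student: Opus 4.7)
The plan is to establish the two claims separately using the heat kernel machinery of Section \ref{sec smooth kernels}.

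For convergence of $A_g^t(D_C, a')$, I follow the template of Proposition \ref{prop conv cusp contr}: use Proposition \ref{prop eF eP} to expand $e_P^{-sD_C^-D_C^+}D_C^-$ as $e_F^{-sD_C^-D_C^+}D_C^- P + D_C^- e_F^{-sD_C^+D_C^-}(1-P)$. For $s \geq 1$, Lemma \ref{lem est kappa F} provides the pointwise decay estimates on the Dirichlet heat kernels and their $D_N$-derivatives that feed into the hypothesis \eqref{eq est D kappa} of Lemma \ref{lem ker proj}. The latter then bounds $\left|\int_N \tr(g\lambda_s^P(g^{-1}n, a'; n, a'))\, dn\right|$ by $B e^{-b^2 s}$, and on the compact interval $[t, 1]$ the integrand is bounded by smoothness alone. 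Absolute convergence of $\int_t^\infty I(s, a')\, ds$ follows, where $I(s, x) := \int_N \tr(g\lambda_s^P(g^{-1}n, x; n, x))\, dn$.

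For the approximation, given $\varepsilon > 0$ and $t > 0$, I choose $\psi$ (and a companion $\varphi_2$ as in Subsection \ref{sec parametrices}) so that $\psi$ transitions from $1$ to $0$ in a small interval around $a'$ of width $\delta$, with $\varphi_2 \equiv 1$ on $\operatorname{supp}(\psi')$. Writing out the $g$-trace using the warped Riemannian measure, $A_g^t(D_C, \psi')$ becomes an iterated integral whose inner factor is $\psi'(x) f_1(x) I(s, x)$ times the warping factor, normalised so that $-\psi'$ plays the role of an approximate Dirac mass at $a'$. For each fixed $s$, this inner integral converges to $-f_1(a') I(s, a')$ as $\delta \downarrow 0$. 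To upgrade this pointwise-in-$s$ statement to control on the full iterated integral, I would split the $s$-domain at $s = 1$: on $[t, 1]$ the kernel is jointly continuous in $(s, x)$, hence uniformly continuous on the compact set $[t, 1] \times [a' - \delta_0, a' + \delta_0]$; on $[1, \infty)$ the exponential bound from the first step, being uniform in $x$ near $a'$, dominates. Both pieces become arbitrarily small as $\delta \downarrow 0$, yielding \eqref{eq est At delta}.

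The main obstacle is securing the $s$-uniform control that allows us to interchange the $\delta \downarrow 0$ limit with the $s$-integral. This ultimately requires verifying that the estimates from Lemma \ref{lem est kappa F} and Lemma \ref{lem ker proj} are uniform in $x$ in a neighbourhood of $a'$, which is true because their constants depend only on spectral data of $D_N$, the dimension and volume of $N$, and the Friedrichs lower bound $b^2$ for $D_C^2$ — none of which depend on the evaluation point $x$. A minor wrinkle is the restrictive class of admissible $\psi$ from Subsection \ref{sec parametrices}, but there is enough flexibility there to place the transition of $\psi$ from $1$ to $0$ inside any prescribed neighbourhood of $a'$.
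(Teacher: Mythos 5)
Your proposal is correct and follows essentially the same route as the paper: both rest on Proposition \ref{prop eF eP} together with the decay estimates of Lemma \ref{lem est kappa F} fed into Lemma \ref{lem ker proj}, and then approximate the Dirac mass at $a'$ by $-\psi'$. The only difference is cosmetic — the paper packages the $s$-uniform control by observing that a cutoff version of $F(s,x)$ lies in $\mathcal{S}(\mathbb{R}^2)$ and tests it against the distribution $\delta_{\mathbb{R}\times\{a'\}}$, while you unpack the same estimates into an elementary dominated-convergence argument split at $s=1$.
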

\begin{proof}
For $s>0$ and $x\in  (0,1)$, we write
\[
F(s, x) := \int_N \tr(g \lambda_s^P(g^{-1}n, x; n, x)f_1(x))\, dn.
\]
This defines a smooth function on $(0, \infty) \times (0,1)$. To avoid complications near the boundaries, we consider functions $\chi_1 \in C^{\infty}(0, \infty)$ and $\chi_2 \in C^{\infty}_c(0,1)$  with values in $[0, 1]$, such that
\[
\begin{split}
\chi_1(s) &= 0 \quad  \text {for $s \leq t/2$};\\
\chi_1(s) &= 1\quad \text {for $s \geq t$};\\
\chi_2(x) &= 1\quad \text {for $x  \in [a'/2, a'']$},
\end{split}
\]
for some $a'' \in (a'/2, 1)$.
Then by Lemma \ref{lem est kappa F}, and the expression for $\lambda_s^P$ in terms of Dirichlet heat kernels from Proposition \ref{prop eF eP},
 the extension of the function $(\chi_1 \otimes \chi_2)f$  by zero outside $(0, \infty) \times (0,1)$ lies in the Schwartz space $\calS(\R^2)$.

Fix $\varepsilon>0$.
Now the distribution $\delta_{\R \times \{a'\}} \in \calS'(\R^2)$ defined by integrating functions over $\R \times \{a'\}$, can be approximated arbitrarily closely by functions $1 \otimes (-\psi')$ in the sense of distributions, with $\psi$ as at the start of Subsection \ref{sec parametrices}, and with $\psi'$ supported in $(a'/2,a'')$. So we can choose $\psi$ with these properties such that
\beq{eq psi delta}
\left|  \int_{\R^2} \chi_1(s)  F(s, x) (-\psi'(x)) \, ds\, dx - \int_{\R} \chi_1(s) F(s, a')  \, ds\, \right|< \varepsilon/2.
\eeq
Here we use the fact that $\chi_2(x) = 1$  for $x \in \supp(\psi')$, so the first  integral does not change if we insert $\chi_2(x)$  into the integrand.

By choosing $\chi_1$ supported close enough to $[t, \infty)$, we can ensure that the left hand side of \eqref{eq psi delta} is within $\varepsilon/2$ of the left hand side of \eqref{eq est At delta}.
\end{proof}

\begin{proof}[Proof of Theorem \ref{thm index general}.]
To prove that  $A_{g}(D_C, a')$ converges, our starting point is the fact that $\ind_G(D)(g)$ equals \eqref{eq index general 3}, as shown in the proof of Theorem \ref{thm index general psi}.
Let $\varepsilon > 0$. By standard heat kernel asymptotics, we can choose $t>0$ such that
\[
\left|  \Tr(g\circ e^{-t\tilde D^- \tilde D^+}\psi) -  \Tr(g\circ e^{-t\tilde D^+ \tilde D^-}\psi) - \int_{M^g} \psi|_{M^g} \AS_g(D)\right| < \varepsilon.
\]
Fix $t>0$ such that this holds, and also $|F(t)|< \varepsilon$.
For
 $a' \in(0,1)$,
choose $\psi$ as in Lemma \ref{lem psi delta}. By choosing $\psi$ so that $\psi'$ is close enough to $\delta_{N \times \{a'\}}$, we can ensure that also
\[
\left| \int_{M^g} \psi|_{M^g} \AS_g(D) - \int_{Z^g \cup (N^g \times (a,a'])}  \AS_g(D) \ \right|< \varepsilon.
\]
Then we find that
\[
\left| \ind_G(D)(g) -  \int_{Z^g \cup (N^g \times (a,a'])}  \AS_g(D) - A_g^t(D_C,  a') \right|< 4\varepsilon.
\]
So $A_g(D_C, a')  = \lim_{t \downarrow 0}A_g^t(D_C, a') $ converges, and \eqref{eq index general} holds.
\end{proof}

\section{The Donnelly--APS index theorem} \label{sec APS}

In this section, we use Corollary \ref{cor index general} to prove Corollary \ref{cor APS}.
 The proof is based on an explicit expression for the APS heat operator $e_P^{-sD_C^- D_C^+}$, given in \cite{APS1}. In Subsection 5.1 of \cite{HW21b}, we give a spectral version of the geometric computation in this section.

\subsection{Vanishing of an integral}

In the proof of Corollary \ref{cor APS}, we will use the following vanishing result.
\begin{proposition}\label{prop vanish}
Let $(\lambda_j)_{j=1}^{\infty}$ and $(a_j)_{j=1}^{\infty}$ be  sequences in $\R$ such that $|\lambda_1|>0$, and $|\lambda_j| \leq |\lambda_{j+1}|$ for all $j$, and such that
there are $c_1,c_2, c_3, c_4>0$ such that for all $j$,
\beq{eq growth lambda}
\begin{split}
|\lambda_j| &\geq c_1 j^{c_2};\\
|a_j| & \leq c_3 j^{c_4}.
\end{split}
\eeq
Then for all $a'>0$,
\beq{eq vanish}
\int_0^{\infty} \sum_{j=1}^{\infty} \sgn(\lambda_j)  a_j \frac{e^{-\lambda_j^2 s} e^{-a'^2/s}}{\sqrt{s}} \left(\frac{a'}{s} - |\lambda_j| \right) \, ds = 0.
\eeq
\end{proposition}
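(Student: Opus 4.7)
My plan is to show that each individual term in the sum over $j$ already vanishes, and then use Fubini to swap the sum and the integral.

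For fixed $j$, writing $\mu := |\lambda_j| > 0$, the $j$-th integrand is $\sgn(\lambda_j)a_j$ times $I(\mu) := \int_0^\infty s^{-1/2} e^{-\mu^2 s - a'^2/s}\bigl(a'/s - \mu\bigr)\,ds$. I will compute $I(\mu)$ using the two classical Gaussian integrals
\[
\int_0^\infty s^{-1/2} e^{-\mu^2 s - a'^2/s}\, ds = \frac{\sqrt{\pi}}{\mu}\,e^{-2a'\mu},
\qquad
\int_0^\infty s^{-3/2} e^{-\mu^2 s - a'^2/s}\, ds = \frac{\sqrt{\pi}}{a'}\,e^{-2a'\mu},
\]
the second of which follows from the first by the substitution $s \mapsto a'^2/(\mu^2 s)$ (or by differentiating under the integral, whichever is cleaner). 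Plugging these in gives
\[
I(\mu) = a'\cdot\frac{\sqrt{\pi}}{a'}e^{-2a'\mu} - \mu \cdot \frac{\sqrt{\pi}}{\mu}e^{-2a'\mu} = 0.
\]
So every summand is zero termwise.

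To conclude \eqref{eq vanish}, I need to justify interchanging $\sum_j$ and $\int_0^\infty$. For this I will estimate
\[
\sum_{j=1}^{\infty} |a_j|\int_0^\infty s^{-1/2} e^{-\lambda_j^2 s - a'^2/s}\Bigl(\frac{a'}{s} + |\lambda_j|\Bigr)\,ds
	= 2\sqrt{\pi}\sum_{j=1}^\infty |a_j|\, e^{-2a'|\lambda_j|},
\]
using the two integral identities above. The growth hypotheses \eqref{eq growth lambda}, namely $|\lambda_j|\geq c_1 j^{c_2}$ and $|a_j|\leq c_3 j^{c_4}$, make this a convergent series (the exponential beats the polynomial since $a' > 0$ and $c_1,c_2 > 0$). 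Thus Fubini's theorem applies, and \eqref{eq vanish} reduces to the termwise statement $I(|\lambda_j|) = 0$ already established.

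The only nontrivial step is the computation of the two Laplace-type integrals; this is a standard identity (for instance \(\int_0^\infty s^{-1/2}e^{-as - b/s}\,ds = \sqrt{\pi/a}\,e^{-2\sqrt{ab}}\) for $a,b>0$), so I expect no real obstacle. I would simply record these identities, verify the second by the change of variables $u = a'^2/s$, and combine with the polynomial-vs-exponential estimate for Fubini.
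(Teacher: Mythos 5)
Your proof is correct, and it takes a genuinely more direct route than the paper's. The paper does not evaluate the per-$j$ integral in closed form; instead it truncates the integral to $[t,\infty)$, uses the substitution $s \mapsto a'^2/(\lambda^2 s)$ to rewrite the $a'/s$-part as $|\lambda|\int_0^{a'^2/\lambda^2 t}\frac{e^{-\lambda^2 s}e^{-a'^2/s}}{\sqrt{s}}\,ds$ (so the per-$j$ truncated integral becomes a difference of two integrals that visibly converge to the same thing as $t\downarrow 0$), and then spends several lemmas establishing a uniform $t$-independent bound $f(t,|\lambda_j|)e^{-a'|\lambda_j|/2}$ on the per-$j$ truncated difference so that dominated convergence lets it pass to $t\downarrow 0$ inside the sum. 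You instead invoke the closed-form Laplace-type identity $\int_0^\infty s^{-1/2}e^{-as-b/s}\,ds = \sqrt{\pi/a}\,e^{-2\sqrt{ab}}$ (and its $s^{-3/2}$ companion, obtained by the same substitution) to see directly that each $j$-term contributes zero, and then a single Fubini--Tonelli estimate $\sum_j |a_j|\cdot 2\sqrt{\pi}\,e^{-2a'|\lambda_j|} < \infty$ handles the interchange in one stroke. Both proofs are powered by the same underlying facts -- the substitution identity, and the polynomial growth of $|a_j|$ being crushed by the exponential decay $e^{-c\,a'|\lambda_j|}$ guaranteed by $|\lambda_j|\geq c_1 j^{c_2}$ -- but your route bypasses the paper's truncation-and-dominated-convergence machinery (its Lemmas on the cases $t\lessgtr a'/|\lambda|$) entirely. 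The one thing worth writing out explicitly, since it is the whole engine of your Fubini step, is the verification of the $s^{-3/2}$ identity from the $s^{-1/2}$ one via $u = a'^2/(\mu^2 s)$; you flag this, and it is indeed routine.
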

The main part of the proof is to show that the order of the integral and sum may be interchanged. The rest of the argument is a substitution.

We use the notation of Proposition \ref{prop vanish}. We first consider a version of  \eqref{eq vanish} where the integral over $s$ starts at a positive number $t$, which we fix from now on.
\begin{lemma}\label{lem abs conv t}
The integral and sum
\beq{eq vanish t}
\int_t^{\infty}  \sum_{j=1}^{\infty}  \sgn(\lambda_j) a_j \frac{e^{-\lambda_j^2 s} e^{-a'^2/s}}{\sqrt{s}}\left(\frac{a'}{s} - |\lambda_j| \right) \, ds
\eeq
converge absolutely.
\end{lemma}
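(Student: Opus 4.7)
The plan is to exhibit a non-negative summable/integrable majorant for the summand-integrand, and then invoke Tonelli's theorem to get both absolute convergence and the right to interchange the sum and integral. First I would drop the sign and bound each factor in the summand trivially: $|\sgn(\lambda_j)|=1$, $|a_j|\le c_3 j^{c_4}$, $e^{-a'^2/s}\le 1$, and $\bigl|\tfrac{a'}{s}-|\lambda_j|\bigr|\le \tfrac{a'}{s}+|\lambda_j|$, which gives the pointwise bound
\[
c_3\, j^{c_4}\, \frac{e^{-\lambda_j^2 s}}{\sqrt{s}}\left( \frac{a'}{s} + |\lambda_j| \right).
\]

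Next I would carry out the $s$-integral of this bound explicitly. Using the elementary estimates
\[
\int_t^{\infty}\! \frac{e^{-\lambda_j^2 s}}{\sqrt{s}}\, ds \le \frac{e^{-\lambda_j^2 t}}{\lambda_j^2\, \sqrt{t}}, \qquad \int_t^{\infty}\! \frac{e^{-\lambda_j^2 s}}{s^{3/2}}\, ds \le \frac{e^{-\lambda_j^2 t}}{\lambda_j^2\, t^{3/2}},
\]
obtained by pulling $1/\sqrt{s}$ or $1/s^{3/2}$ out at its value at $s=t$ and then integrating the remaining exponential, the $s$-integral is at most
\[
c_3\, j^{c_4}\, e^{-\lambda_j^2 t}\left( \frac{a'}{\lambda_j^2\, t^{3/2}} + \frac{1}{|\lambda_j|\, \sqrt{t}} \right).
\]
Since $|\lambda_j|\ge |\lambda_1|>0$, the rational factor in parentheses is uniformly bounded in $j$, and the hypothesis $|\lambda_j|\ge c_1 j^{c_2}$ with $c_2>0$ gives $e^{-\lambda_j^2 t}\le e^{-c_1^2 t\, j^{2c_2}}$, which decays super-polynomially in $j$ and therefore dominates the polynomial factor $j^{c_4}$. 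Summing over $j$ yields a finite bound.

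Finally, applying Tonelli's theorem to the non-negative measurable function equal to the absolute value of the integrand/summand, the finiteness of $\sum_j \int_t^\infty (\cdots)\, ds$ just established is equivalent to absolute convergence of \eqref{eq vanish t} in either order of summation and integration. The main (rather mild) obstacle is only bookkeeping: one must separate the two terms coming from $a'/s$ and $|\lambda_j|$ so that each integral against $e^{-\lambda_j^2 s}$ can be bounded by a closed-form expression, and then verify that the resulting $j$-series converges; both are immediate from the growth hypotheses \eqref{eq growth lambda}.
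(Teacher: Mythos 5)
Your proof is correct and follows essentially the same strategy as the paper's: bound the summand-integrand by a nonnegative majorant using $e^{-a'^2/s}\le 1$, the triangle inequality on $\frac{a'}{s}-|\lambda_j|$, and the growth hypotheses \eqref{eq growth lambda}, then conclude by Tonelli. The only cosmetic difference is that the paper factors its majorant as a product of a summable $j$-dependent piece and an integrable $s$-dependent piece (via $e^{-\lambda_j^2 s}\le e^{-\lambda_j^2 t/2}e^{-\lambda_1^2 s/2}$ for $s\ge t$), whereas you integrate the majorant in $s$ explicitly first; both are valid.
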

\begin{proof}
For all $s\geq t$, 
\[
\left|a_j
\frac{e^{-\lambda_j^2 s} e^{-a'^2/s}}{\sqrt{s}} \left(\frac{a'}{s} - |\lambda_j| \right) \right| \leq |a_j|\frac{e^{-\lambda_j^2 t/2} e^{-\lambda_1^2 s/2}}{\sqrt{t}} \left( \frac{a'}{t} + |\lambda_j| \right).
\]
The sum
\[
\sum_{j=1}^{\infty} |a_j| e^{-\lambda_j^2 t/2} \left( \frac{a'}{t} + |\lambda_j| \right)
\]
converges by \eqref{eq growth lambda}. The integral
\[
\int_t^{\infty} e^{-\lambda_1^2 s/2}\, ds
\]
converges because $|\lambda_1|>0$.
\end{proof}

\begin{lemma}\label{lem subst}
For all nonzero $\lambda \in \R$, 
\[
\int_t^{\infty} \frac{e^{-\lambda^2 s} e^{-a'^2/s}}{\sqrt{s}} \frac{a'}{s}  \, ds =
 |\lambda|
\int_0^{a'^2/\lambda^2 t} \frac{e^{-\lambda^2 s} e^{-a'^2/s}}{\sqrt{s}} \, ds.
\]
\end{lemma}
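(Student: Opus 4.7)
The plan is to prove the identity by a direct change of variables. The right-hand side has the variable of integration running over a bounded interval $(0, a'^2/\lambda^2 t)$, while the left-hand side runs over $(t, \infty)$; the map $s \mapsto a'^2/(\lambda^2 s)$ is a decreasing bijection between these two intervals, so it is the natural candidate.

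Concretely, I would substitute $u = a'^2/(\lambda^2 s)$, equivalently $s = a'^2/(\lambda^2 u)$, with $ds = -\bigl(a'^2/(\lambda^2 u^2)\bigr)\, du$. Under this substitution the exponents swap: $e^{-\lambda^2 s} = e^{-a'^2/u}$ and $e^{-a'^2/s} = e^{-\lambda^2 u}$. The remaining factors simplify as
\[
\frac{1}{\sqrt{s}} \cdot \frac{a'}{s} \cdot \left| ds\right| \;=\; \frac{|\lambda|\sqrt{u}}{a'} \cdot \frac{\lambda^2 u}{a'} \cdot \frac{a'^2}{\lambda^2 u^2}\, du \;=\; \frac{|\lambda|}{\sqrt{u}}\, du,
\]
where I used $a' > 0$ so that $\sqrt{\lambda^2 u/a'^2} = |\lambda|\sqrt{u}/a'$. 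The sign $-1$ from $ds$ is absorbed in reversing the orientation of the interval: $s = t$ corresponds to $u = a'^2/(\lambda^2 t)$, while $s \to \infty$ corresponds to $u \to 0$.

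Combining these computations gives
\[
\int_t^{\infty} \frac{e^{-\lambda^2 s} e^{-a'^2/s}}{\sqrt{s}} \frac{a'}{s}\, ds \;=\; |\lambda| \int_0^{a'^2/(\lambda^2 t)} \frac{e^{-a'^2/u} e^{-\lambda^2 u}}{\sqrt{u}}\, du,
\]
which, after renaming $u$ back to $s$ and noting that the integrand is symmetric in the swap $e^{-\lambda^2 s}e^{-a'^2/s} \leftrightarrow e^{-a'^2/u}e^{-\lambda^2 u}$, is exactly the right-hand side of the lemma.

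There is no real obstacle here: once one spots that the two factors $e^{-\lambda^2 s}$ and $e^{-a'^2/s}$ are interchanged by the involution $s \mapsto a'^2/(\lambda^2 s)$ (which is the standard trick behind the symmetry of theta-like integrals), the rest is a routine bookkeeping of constants. The only point worth checking carefully is the absolute value $|\lambda|$ arising from $\sqrt{\lambda^2} = |\lambda|$, which accounts for the factor on the right-hand side and also explains why the identity is insensitive to the sign of $\lambda$.
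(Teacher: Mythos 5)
Your proof is correct and uses exactly the substitution $s \mapsto a'^2/(\lambda^2 s)$ that the paper uses; you have simply spelled out the bookkeeping that the paper leaves implicit.
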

\begin{proof}
Substitute $s \mapsto \frac{a'^2}{\lambda^2 s}$.
\end{proof}

By Lemmas \ref{lem abs conv t} and \ref{lem subst} and the Fubini--Tonelli theorem, the expression \eqref{eq vanish t} equals
\beq{eq vanish 2}
\sum_{j=1}^{\infty} \lambda_j a_j
\left(
\int_0^{a'^2/\lambda_j^2t}
 \frac{e^{-\lambda_j^2 s} e^{-a'^2/s}}{\sqrt{s}}\, ds -
\int_t^{\infty}
 \frac{e^{-\lambda_j^2 s} e^{-a'^2/s}}{\sqrt{s}}\, ds
 \right).
\eeq
We will bound the term for each $j$ by a summable function independent of $t$, so that the limit $t \downarrow 0$ may be taken inside the sum by dominated convergence. We consider two cases separately: $t \leq a'/|\lambda_j|$ and $t \geq a'/|\lambda_j|$.

\begin{lemma}\label{lem small t est 1}
For all $t> 0$ and nonzero $\lambda \in \R$ with $t \leq a'/|\lambda|$,
\[
\int_0^t e^{-2\lambda^2 s - a'^2/s}\, ds \leq \frac{a'}{|\lambda|} e^{-a'|\lambda|}.
\]
\end{lemma}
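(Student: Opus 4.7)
The inequality is elementary and the plan reduces to a uniform pointwise bound on the integrand. The exponent $2\lambda^2 s + a'^2/s$ is a sum of two positive quantities whose product does not depend on $s$, so the AM--GM (equivalently, Young) inequality gives
\[
2\lambda^2 s + \frac{a'^2}{s} \;\geq\; 2\sqrt{2\lambda^2 s \cdot \frac{a'^2}{s}} \;=\; 2\sqrt{2}\,|\lambda|\,a' \;\geq\; |\lambda|\,a'
\]
for every $s>0$, since $2\sqrt{2}>1$. Hence the integrand satisfies
\[
e^{-2\lambda^2 s - a'^2/s} \;\leq\; e^{-|\lambda|\,a'}
\]
uniformly in $s\in(0,t]$.

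Integrating this uniform bound over $(0,t)$ yields
\[
\int_0^t e^{-2\lambda^2 s - a'^2/s}\, ds \;\leq\; t\, e^{-|\lambda|\,a'}.
\]
Finally, the hypothesis $t \leq a'/|\lambda|$ converts this into the desired estimate
\[
\int_0^t e^{-2\lambda^2 s - a'^2/s}\, ds \;\leq\; \frac{a'}{|\lambda|}\, e^{-|\lambda|\,a'}.
\]

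There is no genuine obstacle; the only thing worth noting is that the stronger pointwise bound $e^{-2\sqrt{2}|\lambda|a'}$ is available, but the weaker bound $e^{-|\lambda|a'}$ is what matches the statement and is all that is needed for the subsequent dominated-convergence argument in \eqref{eq vanish 2}.
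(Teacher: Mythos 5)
Your proof is correct. The paper's argument is even more parsimonious: it simply discards the factor $e^{-2\lambda^2 s}\le 1$ and observes that for $s\le t\le a'/|\lambda|$ one has $a'^2/s\ge a'|\lambda|$, hence $e^{-a'^2/s}\le e^{-a'|\lambda|}$ on the domain of integration; the rest is the same integration and use of $t\le a'/|\lambda|$. Your AM--GM step is a genuinely different (and slightly stronger) pointwise bound $e^{-2\lambda^2 s-a'^2/s}\le e^{-2\sqrt{2}\,|\lambda|a'}$ valid for \emph{all} $s>0$, whereas the paper's bound on the integrand uses the hypothesis $t\le a'/|\lambda|$ already at that stage. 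Neither route buys anything material for the subsequent dominated-convergence argument, so the difference is purely one of taste; if anything, your version makes it transparent that the exponential decay in $|\lambda|$ comes from the $e^{-a'^2/s}$ factor and does not depend on where the $s$-integral is truncated.
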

\begin{proof}
If $s \leq t \leq a'/|\lambda|$, then $e^{-a'^2/s} \leq e^{-a'|\lambda|}$. So the left hand side is at most equal to $t e^{-a'|\lambda|}$, which is at most equal to the right hand side.
\end{proof}

\begin{lemma}\label{lem small t subst}
For all $t> 0$ and nonzero $\lambda \in \R$ with $t \leq a'/|\lambda|$,
\beq{eq small t subst}
\int_0^{a'^2/\lambda^2t}
 \frac{e^{-\lambda^2 s} e^{-a'^2/s}}{\sqrt{s}}\, ds -
\int_t^{\infty}
 \frac{e^{-\lambda^2 s} e^{-a'^2/s}}{\sqrt{s}}\, ds
 = \int_0^t
 \frac{e^{-\lambda^2 s} e^{-a'^2/s}}{\sqrt{s}} \left(1-\frac{a'}{|\lambda| s} \right)\, ds.
\eeq
\end{lemma}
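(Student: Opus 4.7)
The plan is to split both integrals on the left hand side at the point $s = a'^2/(\lambda^2 t)$, use a symmetry of the integrand under the involution $s \mapsto a'^2/(\lambda^2 s)$, and observe that middle pieces cancel. The hypothesis $t \leq a'/|\lambda|$ will be used to guarantee that the splitting point lies in the correct range, namely $t \leq a'^2/(\lambda^2 t)$, so that the interval $[t, a'^2/(\lambda^2 t)]$ is nondegenerate.

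First I would split
\[
\int_0^{a'^2/\lambda^2 t} = \int_0^t + \int_t^{a'^2/\lambda^2 t}, \qquad
\int_t^{\infty} = \int_t^{a'^2/\lambda^2 t} + \int_{a'^2/\lambda^2 t}^{\infty},
\]
so that the left hand side of \eqref{eq small t subst} equals
\[
\int_0^t \frac{e^{-\lambda^2 s} e^{-a'^2/s}}{\sqrt{s}} \, ds - \int_{a'^2/\lambda^2 t}^{\infty} \frac{e^{-\lambda^2 s} e^{-a'^2/s}}{\sqrt{s}} \, ds.
\]

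Next I would apply the substitution $s \mapsto a'^2/(\lambda^2 s)$ to the second integral, exactly as in the proof of Lemma \ref{lem subst}. Under this change of variables the bounds $s = a'^2/(\lambda^2 t)$ and $s \to \infty$ map to $s = t$ and $s \to 0$; the Gaussian factor $e^{-\lambda^2 s} e^{-a'^2/s}$ is invariant; and the measure $ds/\sqrt{s}$ transforms into $(a'/(|\lambda| s^2)) \sqrt{s}\, ds = (a'/(|\lambda| s)) \cdot ds/\sqrt{s}$. This yields
\[
\int_{a'^2/\lambda^2 t}^{\infty} \frac{e^{-\lambda^2 s} e^{-a'^2/s}}{\sqrt{s}} \, ds = \int_0^t \frac{e^{-\lambda^2 s} e^{-a'^2/s}}{\sqrt{s}} \cdot \frac{a'}{|\lambda| s} \, ds.
\]
Subtracting this identity from the first piece gives the right hand side of \eqref{eq small t subst}.

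There is no real obstacle here; the only thing to check is that the substitution is valid, which requires $|\lambda| \neq 0$ (already assumed) and that the integrand is absolutely integrable on each interval, which is immediate from the Gaussian decay. The hypothesis $t \leq a'/|\lambda|$ enters only to justify the splitting of $\int_0^{a'^2/\lambda^2 t}$ into the union of $[0,t]$ and $[t, a'^2/(\lambda^2 t)]$.
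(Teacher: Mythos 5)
Your proof is correct and takes essentially the same route as the paper: cancel the common piece $\int_t^{a'^2/\lambda^2 t}$ from the two integrals, then apply the involution $s \mapsto a'^2/(\lambda^2 s)$ to the remaining tail integral, just as the paper does (via Lemma \ref{lem subst}). You have only spelled out the cancellation step more explicitly than the paper does.
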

\begin{proof}
If $t \leq a'/|\lambda|$, then $t \leq a'^2/\lambda^2t$.
So the left hand side of \eqref{eq small t subst} equals
\[
\int_0^{t}
 \frac{e^{-\lambda^2 s} e^{-a'^2/s}}{\sqrt{s}}\, ds -
\int_{a'^2/\lambda^2 t}^{\infty}
 \frac{e^{-\lambda^2 s} e^{-a'^2/s}}{\sqrt{s}}\, ds.
\]
By a substitution $s\mapsto a'^2/\lambda^2 s$ (or by Lemma \ref{lem subst}),
\[
\int_{a'^2/\lambda^2 t}^{\infty}
 \frac{e^{-\lambda^2 s} e^{-a'^2/s}}{\sqrt{s}}\, ds = \frac{a'}{|\lambda|} \int_0^t
  \frac{e^{-\lambda^2 s} e^{-a'^2/s}}{\sqrt{s}} \frac{1}{s}\, ds.
\]
\end{proof}

\begin{lemma}\label{lem est small t 3}
For all $t> 0$ and nonzero $\lambda \in \R$ with $t \leq a'/|\lambda|$,
\beq{eq est small t}
\left| \int_0^{a'^2/\lambda^2t}
 \frac{e^{-\lambda^2 s} e^{-a'^2/s}}{\sqrt{s}}\, ds -
\int_t^{\infty}
 \frac{e^{-\lambda^2 s} e^{-a'^2/s}}{\sqrt{s}}\, ds  \right|
 \leq
f_1(t,|\lambda|) e^{-a'|\lambda|/2},
\eeq
for a function $f_1$ that is bounded in $t \in (0,1]$ and in $|\lambda|$ in a subset of $\R$ with a positive lower bound.
\end{lemma}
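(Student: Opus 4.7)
The starting point is Lemma \ref{lem small t subst}, which rewrites the expression inside the absolute value on the left hand side of \eqref{eq est small t} as
\[
\int_0^t \frac{e^{-\lambda^2 s}e^{-a'^2/s}}{\sqrt{s}}\left(1 - \frac{a'}{|\lambda|s}\right)ds.
\]
Applying the triangle inequality and the bound $|1 - a'/(|\lambda|s)| \leq 1 + a'/(|\lambda|s)$ gives
\[
\left| \int_0^{a'^2/\lambda^2 t} - \int_t^{\infty} \right| \leq \int_0^t \frac{e^{-\lambda^2 s}e^{-a'^2/s}}{\sqrt{s}}\left(1 + \frac{a'}{|\lambda|s}\right)ds.
\]

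The main trick is to split the exponential: write $e^{-a'^2/s} = e^{-a'^2/(2s)}\cdot e^{-a'^2/(2s)}$. Since $s \leq t \leq a'/|\lambda|$, we have $a'^2/(2s) \geq a'|\lambda|/2$, so one of the factors satisfies $e^{-a'^2/(2s)} \leq e^{-a'|\lambda|/2}$. Pulling this out yields
\[
\int_0^t \frac{e^{-\lambda^2 s}e^{-a'^2/s}}{\sqrt{s}}\left(1 + \frac{a'}{|\lambda|s}\right)ds \leq e^{-a'|\lambda|/2}\, f_1(t,|\lambda|),
\]
with
\[
f_1(t,|\lambda|) := \int_0^t \frac{e^{-\lambda^2 s}e^{-a'^2/(2s)}}{\sqrt{s}}\left(1 + \frac{a'}{|\lambda|s}\right)ds.
\]

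It remains to verify that $f_1$ is bounded on $(0,1] \times \{|\lambda| \geq c\}$ for any fixed $c>0$. Estimating $e^{-\lambda^2 s} \leq 1$ and $a'/|\lambda| \leq a'/c$ gives
\[
f_1(t,|\lambda|) \leq \int_0^1 \frac{e^{-a'^2/(2s)}}{\sqrt{s}}\, ds + \frac{a'}{c}\int_0^1 \frac{e^{-a'^2/(2s)}}{s^{3/2}}\, ds,
\]
and both integrals on the right are finite, since $e^{-a'^2/(2s)}$ vanishes to infinite order as $s \downarrow 0$. This completes the bound.

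The proof is essentially routine once the splitting $e^{-a'^2/s} = e^{-a'^2/(2s)}\cdot e^{-a'^2/(2s)}$ is made; no step presents a substantive obstacle. The only mild point to watch is that the factor $a'/(|\lambda|s)$ in the integrand can diverge as $s \downarrow 0$, but this divergence is dominated by the super-exponential decay of $e^{-a'^2/(2s)}$, and the requirement that $|\lambda|$ stay in a set with a positive lower bound is precisely what is needed to keep $a'/|\lambda|$ under control uniformly.
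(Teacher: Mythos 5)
Your proof is correct. You start from the same point as the paper (Lemma \ref{lem small t subst}), but from there the routes diverge: the paper applies the Cauchy--Schwarz inequality to split the integral into two factors, then invokes the auxiliary Lemma \ref{lem small t est 1} to extract the exponential decay $e^{-a'|\lambda|/2}$ from the first factor; you instead use the triangle inequality and the elementary splitting $e^{-a'^2/s} = e^{-a'^2/(2s)} \cdot e^{-a'^2/(2s)}$, pulling out one factor as $e^{-a'|\lambda|/2}$ directly via $s \leq t \leq a'/|\lambda|$, and bound the remaining integral by hand. Your approach is more self-contained and avoids Lemma \ref{lem small t est 1} altogether; the paper's Cauchy--Schwarz decomposition is a bit more structured but ultimately relies on the same observation that $a'^2/s \geq a'|\lambda|$ on the domain of integration. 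The remaining verification---that the residual integral is bounded uniformly for $t \in (0,1]$ and $|\lambda|$ bounded below, using $e^{-\lambda^2 s} \leq 1$ and the superpolynomial vanishing of $e^{-a'^2/(2s)}$ as $s \downarrow 0$---is handled correctly.
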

\begin{proof}
By Lemma \ref{lem small t subst} and the Cauchy--Schwartz inequality, the left hand side of  \eqref{eq est small t} is at most equal to
\[
\left( \int_0^t e^{-2\lambda^2 s}  e^{-a'^2 /s}\, ds\right)^{1/2}
\left( \int_0^t  \frac{e^{-a'^2/s}}{s}
 \left(1-\frac{a'}{|\lambda| s} \right)^2
 \, ds\right)^{1/2}.
\]
The second factor is  bounded in $t \in (0,1]$ and in $|\lambda|$ in a set with a positive lower bound. By Lemma \ref{lem small t est 1}, the first factor is at most equal to
\[
\left( \frac{a'}{|\lambda|} e^{-a'|\lambda|} \right)^{1/2}.
\]
\end{proof}

Next, we turn to the case $t \geq a'/|\lambda_j|$.
\begin{lemma}\label{lem est large t 3}
For all nonzero $\lambda \in \R$ and $t \geq a'/|\lambda|$,
\beq{eq est large t}
\left| \int_0^{a'^2/\lambda^2 t}
 \frac{e^{-\lambda^2 s} e^{-a'^2/s}}{\sqrt{s}}\, ds
\right| \leq f_2(|\lambda|) e^{-a'|\lambda|/2},
\eeq
for a function $f_2$ that is bounded in $|\lambda|$ in a subset of $\R$ with a positive lower bound.
\end{lemma}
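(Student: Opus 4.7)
The plan is to exploit the hypothesis $t \geq a'/|\lambda|$, which forces $a'^2/(\lambda^2 t) \leq a'/|\lambda|$, so that throughout the integration interval one has $s \leq a'/|\lambda|$ and hence $a'^2/s \geq a'|\lambda|$. This is the only place the lower bound on $t$ enters the argument, and it is exactly what makes a factor decaying in $|\lambda|$ available.

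Concretely, I would split $a'^2/s = a'^2/(2s) + a'^2/(2s)$ and apply the bound above to one of the two halves to obtain the pointwise estimate
\[
e^{-a'^2/s} \leq e^{-a'|\lambda|/2}\, e^{-a'^2/(2s)}
\]
on the integration interval. Substituting this and extending the upper limit to $+\infty$ (permissible since the integrand is nonnegative) reduces the claim to showing that
\[
I(|\lambda|) := \int_0^{\infty} \frac{e^{-\lambda^2 s - a'^2/(2s)}}{\sqrt{s}}\, ds
\]
is bounded as a function of $|\lambda|$ on subsets of $\R$ with a positive lower bound. This integral admits the classical closed form $\int_0^{\infty} s^{-1/2}\, e^{-\alpha s - \beta/s}\, ds = \sqrt{\pi/\alpha}\, e^{-2\sqrt{\alpha\beta}}$ for $\alpha, \beta > 0$, giving $I(|\lambda|) = (\sqrt{\pi}/|\lambda|)\, e^{-\sqrt{2}\, a'|\lambda|}$; one may then take $f_2(|\lambda|) := (\sqrt{\pi}/|\lambda|)\, e^{-\sqrt{2}\, a'|\lambda|}$, which indeed decays as $|\lambda| \to \infty$ and is trivially bounded on $\{|\lambda| \geq c\}$ for any $c > 0$.

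There is no genuine obstacle. The only subtlety to notice is that the factor $e^{-\lambda^2 s}$ must be retained when the integral is extended to $+\infty$, since otherwise $\int_0^{\infty} e^{-a'^2/(2s)}/\sqrt{s}\, ds$ diverges at $s = \infty$; equivalently, one could stop the integral at the finite upper limit $a'/|\lambda|$, which also suffices. If one wishes to avoid the closed-form Laplace identity, the substitution $s = (a'/|\lambda|)\, u$ recasts $I(|\lambda|)$ as $\sqrt{a'/|\lambda|}\, \int_0^{\infty} u^{-1/2} e^{-a'|\lambda|(u + 1/(2u))}\, du$, and the elementary inequality $u + 1/(2u) \geq 1 + u/2$ (an AM--GM application on $u/2 + 1/(2u) \geq 1$) reduces the estimate to a standard Gaussian $\int_0^{\infty} u^{-1/2}\, e^{-\gamma u}\, du = \sqrt{\pi/\gamma}$, yielding a comparable bound of order $|\lambda|^{-1}\, e^{-a'|\lambda|}$.
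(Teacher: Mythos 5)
Your argument is correct and rests on the same key observation as the paper's proof: on the integration interval one has $s \leq a'^2/(\lambda^2 t) \leq a'/|\lambda|$, so splitting $e^{-a'^2/s} = e^{-a'^2/(2s)} e^{-a'^2/(2s)}$ and using one half gives the decaying factor $e^{-a'|\lambda|/2}$. The only difference is in how the remaining integral is controlled: the paper discards $e^{-\lambda^2 s}$, bounds the integral by (interval length) $\times \sup_{s>0} e^{-a'^2/(2s)}/\sqrt{s}$, and uses that the interval length is at most $a'/|\lambda|$, whereas you retain $e^{-\lambda^2 s}$, extend the upper limit to $+\infty$, and invoke the closed-form Laplace identity (or, in your alternative, the AM--GM substitution argument) — a purely cosmetic difference in bookkeeping that yields comparable bounds.
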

\begin{proof}
If $s \leq a'^2/\lambda^2 t \leq a'/|\lambda|$, then $e^{-a'^2/2s} \leq e^{-a'|\lambda|/2}$. This implies that the left hand side of \eqref{eq est large t} is at most equal to
\beq{eq large t est 1}
\frac{a'^2}{\lambda^2 t} e^{-a'|\lambda|/2} \max_{0 \leq s \leq a'^2/\lambda^2 t} \frac{e^{-a'^2/2s}}{\sqrt{s}}.
\eeq
The condition on $t$ implies that $\frac{a'^2}{\lambda^2 t} \leq a'/|\lambda|$, so the claim follows.
\end{proof}

\begin{lemma}\label{lem est large t 4}
For all  nonzero $\lambda \in \R$ and $t \geq a'/|\lambda|$,
\beq{eq est large t 2}
\left| \int_t^{\infty}
 \frac{e^{-\lambda^2 s} e^{-a'^2/s}}{\sqrt{s}} \, ds \right| \leq f_3(|\lambda|) e^{-a'|\lambda|/2},
\eeq
for a function $f_3$ that is bounded in $|\lambda|$ in a subset of $\R$ with a positive lower bound.
\end{lemma}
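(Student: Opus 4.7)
The plan is to split the exponential decay in $s$ into two factors and absorb the $a'$-dependence into one of them, leaving a standard Gaussian integral for the other.

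More concretely, I would first write
\[
e^{-\lambda^2 s} = e^{-\lambda^2 s/2}\, e^{-\lambda^2 s/2}.
\]
Since $s \geq t \geq a'/|\lambda|$, we have $\lambda^2 s \geq \lambda^2 t \geq |\lambda| a'$, and hence $e^{-\lambda^2 s/2} \leq e^{-|\lambda| a'/2}$ throughout the domain of integration. Dropping the harmless factor $e^{-a'^2/s} \leq 1$, this gives
\[
\left| \int_t^{\infty} \frac{e^{-\lambda^2 s} e^{-a'^2/s}}{\sqrt{s}} \, ds \right|
\leq e^{-a'|\lambda|/2} \int_t^{\infty} \frac{e^{-\lambda^2 s/2}}{\sqrt{s}} \, ds
\leq e^{-a'|\lambda|/2} \int_0^{\infty} \frac{e^{-\lambda^2 s/2}}{\sqrt{s}} \, ds.
\]

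The second factor is a standard Gaussian integral: the substitution $u = \lambda^2 s/2$ yields
\[
\int_0^{\infty} \frac{e^{-\lambda^2 s/2}}{\sqrt{s}} \, ds
= \frac{\sqrt{2}}{|\lambda|} \int_0^{\infty} \frac{e^{-u}}{\sqrt{u}} \, du
= \frac{\sqrt{2\pi}}{|\lambda|}.
\]
Setting $f_3(|\lambda|) := \sqrt{2\pi}/|\lambda|$, we obtain \eqref{eq est large t 2}, and $f_3$ is manifestly bounded on any subset of $\R$ with a positive lower bound on $|\lambda|$.

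There is no real obstacle here, since the inequality $\lambda^2 s \geq |\lambda| a'$ on the range of integration does all the work: it converts the hypothesis $t \geq a'/|\lambda|$ directly into the target exponential factor $e^{-a'|\lambda|/2}$, while the remaining $s$-integral is convergent independently of $t$. This is the natural large-$t$ counterpart of Lemma \ref{lem est large t 3}, where there the smallness of $s$ was exploited via $e^{-a'^2/s} \leq e^{-a'|\lambda|/2}$; here instead the largeness of $s$ is exploited via $e^{-\lambda^2 s/2} \leq e^{-a'|\lambda|/2}$.
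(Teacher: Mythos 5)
Your proof is correct and uses the same key idea as the paper: split $e^{-\lambda^2 s}$ into $e^{-\lambda^2 s/2}\,e^{-\lambda^2 s/2}$ and use $s \geq a'/|\lambda|$ on the domain to pull out the factor $e^{-a'|\lambda|/2}$. The only cosmetic difference is that you drop $e^{-a'^2/s} \leq 1$ and evaluate the remaining Gaussian integral exactly to get $f_3(|\lambda|) = \sqrt{2\pi}/|\lambda|$, whereas the paper keeps that factor and bounds $\int_0^\infty \frac{e^{-\lambda^2 s/2}e^{-a'^2/s}}{\sqrt{s}}\,ds$ by splitting the range into $[0,1]$ and $[1,\infty)$; both give an $f_3$ bounded away from $\lambda = 0$.
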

\begin{proof}
If $s \geq t \geq a'/|\lambda|$, then
\[
 \frac{e^{-\lambda^2 s} e^{-a'^2/s}}{\sqrt{s}}  \leq e^{-a'|\lambda|/2}  \frac{e^{-\lambda^2 s/2} e^{-a'^2/s}}{\sqrt{s}}.
\]
So the left hand side of \eqref{eq est large t 2} is at most equal to
\[
e^{-a'|\lambda|/2} \int_0^{\infty} \frac{e^{-\lambda^2 s/2} e^{-a'^2/s}}{\sqrt{s}}\, ds.
\]
And
\[
\int_0^{\infty} \frac{e^{-\lambda^2 s/2} e^{-a'^2/s}}{\sqrt{s}}\, ds \leq
\int_0^{1} \frac{ e^{-a'^2/s}}{\sqrt{s}}\, ds+
\int_1^{\infty} e^{-\lambda^2 s/2} \, ds.
\]
The first term on the right is constant in $\lambda$, the second is bounded in $\lambda$ in sets with positive lower bounds.
\end{proof}

We now combine the cases $t \leq a'/|\lambda_j|$ and $t \geq a'/|\lambda_j|$.
\begin{lemma}\label{lem dominator}
For all $t> 0$ and nonzero $\lambda \in \R$,
\[
\left| \int_0^{a'^2/\lambda^2t}
 \frac{e^{-\lambda^2 s} e^{-a'^2/s}}{\sqrt{s}}\, ds -
\int_t^{\infty}
 \frac{e^{-\lambda^2 s} e^{-a'^2/s}}{\sqrt{s}}\, ds  \right|
 \leq f(t, |\lambda|) e^{-a'|\lambda|/2},
\]
 for a function $f$ that is bounded in $t \in (0,1]$ and in $|\lambda|$ in a subset of $\R$ with a positive lower bound.
\end{lemma}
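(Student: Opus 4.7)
The plan is simply to assemble the two regimes already handled. Given $t > 0$ and nonzero $\lambda \in \R$, exactly one of the inequalities $t \leq a'/|\lambda|$ or $t \geq a'/|\lambda|$ holds (or both, in the boundary case, where either argument applies).

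First, if $t \leq a'/|\lambda|$, Lemma \ref{lem est small t 3} gives the estimate
\[
\left| \int_0^{a'^2/\lambda^2t} \frac{e^{-\lambda^2 s} e^{-a'^2/s}}{\sqrt{s}}\, ds - \int_t^{\infty} \frac{e^{-\lambda^2 s} e^{-a'^2/s}}{\sqrt{s}}\, ds \right| \leq f_1(t,|\lambda|) e^{-a'|\lambda|/2}
\]
with $f_1$ bounded on $(0,1] \times \{|\lambda| \geq c\}$ for any $c>0$.

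Second, if $t \geq a'/|\lambda|$, apply the triangle inequality to the difference of the two integrals, then bound the first integral by Lemma \ref{lem est large t 3} and the second integral by Lemma \ref{lem est large t 4}, obtaining
\[
\left| \int_0^{a'^2/\lambda^2t} \frac{e^{-\lambda^2 s} e^{-a'^2/s}}{\sqrt{s}}\, ds - \int_t^{\infty} \frac{e^{-\lambda^2 s} e^{-a'^2/s}}{\sqrt{s}}\, ds \right| \leq \bigl(f_2(|\lambda|) + f_3(|\lambda|)\bigr) e^{-a'|\lambda|/2},
\]
where $f_2, f_3$ are bounded on $\{|\lambda| \geq c\}$.

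Setting
\[
f(t, |\lambda|) := \max\bigl( f_1(t,|\lambda|),\ f_2(|\lambda|) + f_3(|\lambda|) \bigr)
\]
(or, equivalently, their sum) yields a function bounded in $t \in (0,1]$ and in $|\lambda|$ on any subset of $\R$ with a positive lower bound, and which dominates the left-hand side in both regimes. There is no real obstacle here; the statement is essentially a packaging of Lemmas \ref{lem est small t 3}, \ref{lem est large t 3} and \ref{lem est large t 4}, and the only thing to check is that the two bounds share the common exponential factor $e^{-a'|\lambda|/2}$, which they do.
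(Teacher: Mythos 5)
Your proof is correct and is essentially identical to the paper's: both define $f$ as the maximum of $f_1$ and $f_2+f_3$ and split into the cases $t \leq a'/|\lambda|$ and $t \geq a'/|\lambda|$, invoking Lemmas \ref{lem est small t 3}, \ref{lem est large t 3} and \ref{lem est large t 4} exactly as you do.
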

\begin{proof}
For $t>0$ and nonzero $\lambda \in \R$, let
\[
f(t, |\lambda|):= \max\bigl(f_1(t, |\lambda|), f_2(|\lambda|)+f_3(|\lambda|) \bigr),
\]
for $f_1$ as in Lemma \ref{lem est small t 3}, $f_2$ as in Lemma \ref{lem est large t 3} and $f_3$ as in Lemma \ref{lem est large t 4}. Then  the
claim follows from  Lemma \ref{lem est small t 3} if $t \leq a'/|\lambda|$ and from Lemmas   \ref{lem est large t 3} and \ref{lem est large t 4} if $t\geq a'/|\lambda|$.
\end{proof}

\begin{proof}[Proof of Proposition \ref{prop vanish}.]
By Lemmas \ref{lem abs conv t} and \ref{lem subst}, the left hand side of \eqref{eq vanish} equals
\beq{eq vanish 3}
\lim_{t \downarrow 0}
\sum_{j=1}^{\infty} \lambda_j  a_j
\left(
\int_0^{a'^2/\lambda_j^2t}
 \frac{e^{-\lambda_j^2 s} e^{-a'^2/s}}{\sqrt{s}}\, ds -
\int_t^{\infty}
 \frac{e^{-\lambda_j^2 s} e^{-a'^2/s}}{\sqrt{s}}\, ds
 \right).
\eeq
 For every $j$ separately,
\[
\lim_{t \downarrow 0}
\left(
\int_0^{a'^2/\lambda_j^2t}
 \frac{e^{-\lambda_j^2 s} e^{-a'^2/s}}{\sqrt{s}}\, ds -
\int_t^{\infty}
 \frac{e^{-\lambda_j^2 s} e^{-a'^2/s}}{\sqrt{s}}\, ds \right)= 0.
\]

By Lemma \ref{lem dominator}, the term in \eqref{eq vanish 3} for each $j$ is bounded by a constant times $|\lambda_j a_j|e^{-a' |\lambda_j|/2}$ for all $t \in (0,1]$. The sum over $j$ of this bounding expression converges by \eqref{eq growth lambda}. So by dominated convergence, \eqref{eq vanish 3} equals zero.
\end{proof}

\subsection{Computation of the contribution from infinity}


For every $\lambda \in \spec(D_N^+)$, let $\{\varphi_{\lambda}^1, \ldots, \varphi_{\lambda}^{m_{\lambda}}\}$ be an orthonormal basis of $\ker(D_N^+ - \lambda)$. We identify $S^+|_N \cong (S^+|_N)^*$ using the metric, so that for all $n,n' \in N$, $\lambda \in \spec(D_N^+)$ and $j \in \{1,\ldots, m_{\lambda}\}$,
\[
  \varphi_{\lambda}^j(n) \otimes \varphi_{\lambda}^j(n') \in  S^+_n \otimes (S^+_{n'})^* = \Hom(S^+_{n'}, S^+_n).
\]

In this subsection, we take the subset $J \subset \spec(D_N^+)$ in the definition of $P$ in Subsection \ref{sec index thm general} to be the positive part of the spectrum of $D_N^+$. Then, as in \cite{APS1}, $P$ is orthogonal projection onto the positive eigenspaces of $D_N^+$.
\begin{proposition}\label{prop kappa cyl}
The Schwartz kernel $\lambda_s^P$ of $e_P^{-sD_C^-D_C^+}D_C^-$ is given by
\begin{multline} \label{eq tilde kappa s cyl}
\lambda_s^P(n, y; n', y') = \\
%
\sum_{\lambda > 0} \sum_{j=1}^{m_{\lambda}} \frac{e^{-\lambda^2 t}}{\sqrt{4\pi t}}\left[
e^{-\frac{(y-y')^2}{4t}}\left( \frac{y-y'}{2t} +\lambda \right)
+
e^{-\frac{(y+y')^2}{4t}} \left( \frac{y+y'}{2t} - \lambda \right)
 \right]
 \varphi_{\lambda}^j(n) \otimes \varphi_{\lambda}^j(n') \\
+\sum_{\lambda<0}\sum_{j=1}^{m_{\lambda}} \frac{e^{-\lambda^2 t}}{\sqrt{4\pi t}}
\biggl[
 e^{-\frac{(y-y')^2}{4t}} \left(\frac{y-y'}{2t} +\lambda \right)
 + e^{-\frac{(y+y')^2}{4t}}\left( - \frac{y+y'}{2t} + \lambda \right)  \biggr] \varphi_{\lambda}^j(n) \otimes \varphi_{\lambda}^j(n') \\
 -\sum_{\lambda<0}\sum_{j=1}^{m_{\lambda}}  \frac{\lambda}{\sqrt{\pi t}}
  e^{-\lambda(y+y')}e^{-\left(\frac{y+y'}{2\sqrt{t}}-\lambda\sqrt{t}\right)^2}
 \varphi_{\lambda}^j(n) \otimes \varphi_{\lambda}^j(n'),
\end{multline}
for all $n,n' \in N$ and $y,y' \in (0, \infty)$. Here the outer sums run over the positive and negative eigenvalues $\lambda$ of $D_N^+$, respectively.
\end{proposition}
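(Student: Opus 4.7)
The plan is to reduce to explicit one-dimensional heat kernel computations on the half line by exploiting the product structure of $C\cong N\times(0,\infty)$. The first step is to apply Proposition \ref{prop eF eP} to rewrite
\[
\lambda_s^P = e_F^{-sD_C^-D_C^+}D_C^- P + D_C^- e_F^{-sD_C^+D_C^-}(1-P),
\]
which trades the APS heat operator for Dirichlet ones. On the cylinder $D|_C = \sigma(\partial/\partial y + D_N)$, so under the isometry $L^2(S^\pm|_C)\cong\bigoplus_\lambda L^2(S^\pm|_N)_\lambda\otimes L^2(0,\infty)$ the operator $D_C^\mp D_C^\pm$ restricts to $\mathrm{Id}\otimes(-\partial_y^2+\lambda^2)$ on each summand. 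The Dirichlet heat operator therefore factors as $e^{-s\lambda^2}$ times the half-line Dirichlet heat operator, and it suffices to carry out the calculation one eigenvalue at a time.

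The next step is to insert the method-of-images formula
\[
K_s^F(y,y') = \frac{1}{\sqrt{4\pi s}}\bigl(e^{-(y-y')^2/4s}-e^{-(y+y')^2/4s}\bigr),
\]
and compose with the first-order factor of $D_C^-$. On the $\lambda$-summand this factor acts as a combination of $\partial_{y'}$ and $\lambda$ for the first term of the split (where the derivative lands on the source variable) and of $\partial_y$ and $\lambda$ for the second, producing precisely the bracketed expressions $\frac{y\pm y'}{2s}\pm\lambda$ appearing in the first two lines of \eqref{eq tilde kappa s cyl}. The positive eigenvalues of $D_N^+$ contribute via the first summand of Proposition \ref{prop eF eP} and yield the first line of \eqref{eq tilde kappa s cyl}; the negative eigenvalues feed into the second summand after using $P^-=\sigma_+ P^+\sigma_+^{-1}$ and the APS relation $D_N^-=-\sigma_+ D_N^+\sigma_+^{-1}$ to re-express everything in terms of the eigenbasis of $D_N^+$, accounting for the sign flips that distinguish the first and second lines.

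The third line of \eqref{eq tilde kappa s cyl} encodes the fact that for $\lambda<0$ the APS boundary condition on $D_C^+D_C^-$ is Neumann-like rather than Dirichlet, permitting the exponentially decaying bound state $e^{\lambda y}\in L^2(0,\infty)$. The associated rank-one correction to the Dirichlet heat kernel, combined with the extra $D_C^-$ derivative, produces the stated summand; the Gaussian simplification $e^{-\lambda(y+y')}\cdot e^{-(\frac{y+y'}{2\sqrt{s}}-\lambda\sqrt{s})^2}=e^{-(y+y')^2/4s-\lambda^2 s}$ confirms that this correction is of pure Gaussian form, consistent with the structure of a spectral projector. The main obstacle will be the sign bookkeeping for $\sigma_+$ and for determining in each case whether the $D_C^-$ derivative acts on the $y$- or $y'$-variable; once these are pinned down, the formula follows from a direct half-line calculation that can be cross-checked against Section 2 of \cite{APS1}.
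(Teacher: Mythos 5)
Your route via Proposition \ref{prop eF eP} is a legitimate alternative to the paper's proof, and the paper itself acknowledges in Remark \ref{rem APS bdry cond} that this trade of APS for Dirichlet heat operators is possible. The paper instead starts from the explicit APS heat kernel $\kappa_s^{P,+}$ from (2.16)--(2.17) of \cite{APS1}, which for $\lambda<0$ already contains the $\erfc$ correction to the Dirichlet kernel, and then applies $(D_C^-)^*=D_C^+=\partial_{y'}+D_N^+$ in the second variable; the $\erfc$ terms cancel and a pure Gaussian survives, yielding the third line directly. Your first line for $\lambda>0$ is obtained correctly.

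However, your explanation of the third line contains a genuine misconception. Once you invoke Proposition \ref{prop eF eP}, \emph{every} heat operator in sight is a Dirichlet one; there is no ``Neumann-like'' condition, no bound state $e^{\lambda y}$, and no rank-one correction to the Dirichlet kernel anywhere in your decomposition. What actually happens is that for $\lambda<0$ the second summand $D_C^- e_F^{-sD_C^+D_C^-}(1-P)$, with $D_C^-$ acting on the \emph{first} variable of the Dirichlet image kernel, produces
\[
\frac{e^{-\lambda^2 s}}{\sqrt{4\pi s}}\Bigl[\Bigl(\tfrac{y-y'}{2s}+\lambda\Bigr)e^{-(y-y')^2/4s}+\Bigl(-\tfrac{y+y'}{2s}-\lambda\Bigr)e^{-(y+y')^2/4s}\Bigr],
\]
which is \emph{not} the second line of \eqref{eq tilde kappa s cyl}: the coefficient of $e^{-(y+y')^2/4s}$ is $-\tfrac{y+y'}{2s}-\lambda$, not $-\tfrac{y+y'}{2s}+\lambda$. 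The discrepancy $-2\lambda e^{-(y+y')^2/4s}e^{-\lambda^2 s}/\sqrt{4\pi s}$ is \emph{exactly} the third line after the Gaussian simplification you yourself note, $e^{-\lambda(y+y')}e^{-(\frac{y+y'}{2\sqrt{s}}-\lambda\sqrt{s})^2}=e^{-(y+y')^2/4s-\lambda^2 s}$. In other words, a consistent Dirichlet computation gives the algebraic sum of lines two and three in one stroke, and the split into two lines is a cosmetic rearrangement to match the APS conventions --- not the contribution of a distinct Neumann bound-state term. As written, your argument would count this contribution twice (once mis-identified as ``line two'' and once as the Neumann correction) or omit it; either way the bookkeeping fails. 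If you carry out the half-line Dirichlet computation for $\lambda<0$ and then rewrite the answer in the form of the statement, the proof goes through.
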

\begin{proof}
As before, let $\kappa_s^{P, +}$
be the Schwartz kernel of $e_P^{-sD_C^-D_C^+}$.
%
From the explicit heat kernel expression in (2.16) and (2.17) in \cite{APS1}, we have for all $n,n' \in N$ and $y,y' \in (0, \infty)$,
\begin{align*}
 &\kappa_s^{P, +}(n,y;n',y')=\sum_{\lambda >0} \sum_{j=1}^{m_{\lambda}}\frac{e^{-\lambda^2 t}}{\sqrt{4\pi t}}\left[ e^{-\frac{(y-y')^2}{4t}} - e^{-\frac{(y+y')^2}{4t}}\right]
 \varphi_{\lambda}^j(n) \otimes \varphi_{\lambda}^j(n')
\\
& +\sum_{\lambda<0}  \sum_{j=1}^{m_{\lambda}} \left\{
\frac{e^{-\lambda^2 t}}{\sqrt{4\pi t}}\left[ e^{-\frac{(y-y')^2}{4t}} + e^{-\frac{(y+y')^2}{4t}}\right]+\lambda e^{-\lambda(y+y')}\erfc\left(\frac{y+y'}{2\sqrt{t}}-\lambda\sqrt{t}\right)
\right\}
 \varphi_{\lambda}^j(n) \otimes \varphi_{\lambda}^j(n'),
\end{align*}
where  $\erfc(x)=\frac{2}{\sqrt{\pi}}\int_{x}^{\infty}e^{-\xi^2}d\xi$ is the complementary error function.
In this case,
\[
D_C^{\pm} = \pm \frac{d}{dy} + D_N^+.
\]
We can find   $\lambda_s^P$  by applying $(D_C^-)^* = D_C^+$ to the second variable in  $\kappa_s^{P, +}$. In other words,
\[
\lambda_s^P= \left(1_{S_C^+} \otimes \left(\frac{\partial}{\partial y'} + D_N^+ \right) \right) \kappa_s^{P, +},
\]
where, as above, we view $\kappa_s^{P, +}$ as a section of $S_C^+\boxtimes S_C^+$.
This implies \eqref{eq tilde kappa s cyl} by a direct computation.
%
%
\end{proof}

\begin{proposition}\label{prop eta g}
In the situation of Corollary \ref{cor APS},
we have for all $a'>0$,
\[
A_{g}(D_C, a') = -\frac{1}{2} \eta_g(D_N^+).
\]
\end{proposition}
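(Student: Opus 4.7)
The plan is to substitute the explicit formula from Proposition \ref{prop kappa cyl} into the definition of $A_g(D_C, a')$, noting that in the APS setting $f_1 \equiv 1$, $f_2 \equiv 1$, $f_3 \equiv 0$ and the cylindrical metric is a product. At the diagonal point $y = y' = a'$ one has $(y-y')/(2s) = 0$ and $(y+y')/(2s) = a'/s$, and the crucial identity
\[
e^{-\lambda(y+y')}\,e^{-(a'/\sqrt{s}-\lambda\sqrt{s})^2}\Big|_{y=y'=a'} = e^{-a'^2/s - \lambda^2 s}
\]
allows the ``error function'' correction term for negative eigenvalues of $D_N^+$ to recombine cleanly with the rest of the kernel. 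After this simplification, the kernel $\lambda_s^P(g^{-1}n, a'; n, a')$ can be written uniformly for all $\lambda \in \spec(D_N^+) \setminus \{0\}$ as
\[
\sgn(\lambda)\,\frac{e^{-\lambda^2 s}}{\sqrt{4\pi s}}\Bigl[\,|\lambda| + e^{-a'^2/s}\bigl(\tfrac{a'}{s}-|\lambda|\bigr)\Bigr]\sum_{j=1}^{m_\lambda}\varphi_\lambda^j(g^{-1}n)\otimes\varphi_\lambda^j(n).
\]

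Next I would integrate over $n\in N$. Using $G$-equivariance of $D_N$ (so that each eigenspace is a $G$-sub\-re\-pre\-sen\-ta\-tion) together with the orthonormality of $\{\varphi_\lambda^j\}_j$, one obtains
\[
\sum_{j=1}^{m_\lambda}\int_N \bigl\langle g\cdot\varphi_\lambda^j(g^{-1}n),\, \varphi_\lambda^j(n)\bigr\rangle \,dn = \Tr\bigl(g|_{\ker(D_N^+-\lambda)}\bigr) =: a_\lambda.
\]
The integrand of $A_g(D_C,a')$ then decomposes as $-[(\mathrm{I}) + (\mathrm{II})]$, where
\[
(\mathrm{I}) := \sum_{\lambda}\, a_\lambda\, \frac{\lambda\,e^{-\lambda^2 s}}{\sqrt{4\pi s}},\qquad
(\mathrm{II}) := \sum_{\lambda}\, \sgn(\lambda)\, a_\lambda\, \frac{e^{-\lambda^2 s}\,e^{-a'^2/s}}{\sqrt{4\pi s}}\Bigl(\tfrac{a'}{s}-|\lambda|\Bigr).
\]

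For piece $(\mathrm{I})$ I would interchange the sum and integral and use $\int_0^\infty e^{-\lambda^2 s}/\sqrt{4\pi s}\,ds = 1/(2|\lambda|)$ to get
\[
-\int_0^\infty (\mathrm{I})\,ds = -\tfrac12 \sum_\lambda a_\lambda\, \sgn(\lambda) = -\tfrac12\, \eta_g(D_N^+),
\]
the last equality coming from expanding $\eta_g(D_N^+)$ via the eigenspace decomposition of $D_N^+$ and computing $\frac{1}{\sqrt{\pi}}\int_0^\infty \lambda e^{-\lambda^2 s}s^{-1/2}\,ds = \sgn(\lambda)$. For piece $(\mathrm{II})$, Proposition \ref{prop vanish} applies, with $\lambda_j\in\spec(D_N^+)$ enumerated with multiplicity and $a_j$ the diagonal matrix coefficients of $g$ in a corresponding orthonormal eigenbasis; the growth hypotheses hold because Weyl's law on $N$ yields $|\lambda_j| \geq c_1 j^{1/\dim N}$ and unitarity of $g$ yields $|a_j|\leq 1$. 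Therefore $\int_0^\infty (\mathrm{II})\,ds = 0$, and combining the two pieces gives $A_g(D_C, a') = -\tfrac12\eta_g(D_N^+)$.

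The main obstacle I anticipate is the algebraic reorganisation in the first step: Proposition \ref{prop kappa cyl} is strikingly asymmetric in $\sgn(\lambda)$, and only the evaluation at $y=y'=a'$, together with the exponential identity above, produces a uniform formula for both signs to which the vanishing statement applies. Once this uniformisation is achieved, the convergence and Fubini-type interchange issues for piece $(\mathrm{II})$ are precisely what Proposition \ref{prop vanish} was designed to handle, and piece $(\mathrm{I})$ is a direct elementary calculation, so the remainder is bookkeeping.
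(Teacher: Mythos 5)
Your proposal follows the same route as the paper: evaluate the explicit APS heat kernel at $y=y'=a'$, use the exponential identity to recombine the $\erfc$-correction for $\lambda<0$, reach exactly the paper's intermediate formula (the equivalent of \eqref{eq Ag cyl 2}), and then dispatch the oscillatory piece (II) with Proposition \ref{prop vanish}. Your verification of the hypotheses of Proposition \ref{prop vanish} via eigenvalue enumeration with multiplicity, so that unitarity immediately gives $|a_j|\leq 1$, is a slightly cleaner packaging than the paper's (which takes $a_j = \Tr(g|_{\ker(D_N^+-\lambda_j)})$ and bounds it by the multiplicity, controlled again by Weyl's law).

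There is, however, a genuine flaw in your treatment of piece $(\mathrm{I})$. You propose to interchange the $s$-integral with the sum over $\lambda$ and use $\int_0^{\infty} e^{-\lambda^2 s}/\sqrt{4\pi s}\,ds = 1/(2|\lambda|)$ to obtain $-\tfrac12\sum_{\lambda} a_{\lambda}\sgn(\lambda)$, and then argue that this divergent series equals $\eta_g(D_N^+)$ by yet another interchange inside the definition of $\eta_g$. Neither interchange is legitimate: the series $\sum_{\lambda} a_{\lambda}\sgn(\lambda)$ has terms that do not tend to zero, so Tonelli fails, and the identity $\eta_g(D_N^+) = \sum_{\lambda} a_{\lambda}\sgn(\lambda)$ is false as stated (the $\eta$-invariant is a regularised quantity, not a naively convergent series). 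The correct and simpler move, which the paper makes, is to keep the sum over $\lambda$ intact and recognise that $(\mathrm{I}) = \frac{1}{\sqrt{4\pi s}}\Tr\bigl(g\,e^{-s(D_N^+)^2}D_N^+\bigr)$ for each fixed $s>0$; hence $-\int_0^{\infty}(\mathrm{I})\,ds = -\frac{1}{2\sqrt{\pi}}\int_0^{\infty}\Tr\bigl(g\,e^{-s(D_N^+)^2}D_N^+\bigr)\,s^{-1/2}\,ds = -\tfrac12\,\eta_g(D_N^+)$ \emph{by the definition} of $\eta_g$, with no interchange at all. Convergence of this integral (which Corollary \ref{cor APS} also asserts) then comes for free, since $A_g(D_C,a')$ converges by Theorem \ref{thm index general} and $\int_0^{\infty}(\mathrm{II})\,ds = 0$ by Proposition \ref{prop vanish}.
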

\begin{proof}
The function $f_1$ in \eqref{eq def At} now equals $1$. We use Proposition \ref{prop kappa cyl} and the equality
\[
\sum_{j=1}^{m_{\lambda}}
\int_{N} \tr(g\varphi_{\lambda}^j(g^{-1}n) \otimes \varphi_{\lambda}^j(n)) \, dn =
\sum_{j=1}^{m_{\lambda}} (g\cdot \varphi_{\lambda}^j, \varphi_{\lambda}^j)_{L^2} = \tr(g|_{\ker(D_N^+ - \lambda)})
\]
to compute
\begin{multline*}
A_g(D_C, a')
=-\int_0^{\infty}\int_N
\tr(g\lambda_s^P(g^{-1}n, a'; n, a')) dn\, ds\\
=-\int_0^{\infty}\sum_{\lambda > 0}\frac{e^{-\lambda^2 s}}{\sqrt{4\pi s}}\tr(g|_{\ker(D_N^+ - \lambda)})\left[e^{-\frac{a'^2}{s}}\frac{a'}{s}+\lambda-\lambda e^{-\frac{a'^2}{s}}\right]\,ds \\
-\int_0^{\infty}\sum_{\lambda<0}\frac{e^{-\lambda^2 s}}{\sqrt{4\pi s}}\tr(g|_{\ker(D_N^+ - \lambda)})\left[-e^{-\frac{a'^2}{s}}\frac{a'}{s}+\lambda+\lambda
e^{-\frac{a'^2}{s}}\right] \, ds \\
+\int_0^{\infty}\sum_{\lambda<0}\tr(g|_{\ker(D_N^+ - \lambda)})\frac{\lambda}{\sqrt{\pi s}}e^{-2\lambda a'}e^{-\left(\frac{a'}{\sqrt{s}}-\lambda\sqrt{s}\right)^2}\, ds.
\end{multline*}
The last two lines can be simplified to
\[
-\int_0^{\infty}\sum_{\lambda<0}\frac{e^{-\lambda^2 s}}{\sqrt{4\pi s}}\tr(g|_{\ker(D_N^+ - \lambda)})\left[-e^{-\frac{a'^2}{s}}\frac{a'}{s}+\lambda-\lambda e^{-\frac{a'^2}{s}}\right]\, ds.
\]
We find that
%
\begin{multline} \label{eq Ag cyl 2}
A_{g}(D_C, a')=-\int_0^{\infty}\sum_{\lambda\in \spec(D_N^+)}
\tr(g|_{\ker(D_N^+ - \lambda)})
\frac{e^{-\lambda^2 s}}{\sqrt{4\pi s}}\lambda \, ds\\
-\int_{0}^{\infty}\sum_{\lambda \in \spec(D_N^+)} \sgn(\lambda)\tr(g|_{\ker(D_N^+ - \lambda)})\frac{e^{-\lambda^2 s}e^{-\frac{a'^2}{s}}}{\sqrt{4\pi s}}\left(\frac{a'}{s}-|\lambda|\right)\, ds.
\end{multline}
We apply Proposition \ref{prop vanish} to the last line, with $\lambda_j$ the $j$th eigenvalue of $D_N^+$ (ordered by absolute values), and $a_j = \tr(g|_{\ker(D_N^+ - \lambda_j)})$. Then the condition on $\lambda_j$ in Proposition \ref{prop vanish} holds by Weyl's law, and the condition on $a_j$ holds because
\[
|a_j| \leq  \dim(\ker(D_N^+ - \lambda_j)),
\]
which grows at most polynomially by Weyl's law. Hence Proposition \ref{prop vanish}  implies that the last line of \eqref{eq Ag cyl 2} equals zero.
 We conclude that
\[
A_{g}(D_C, a')
=-\frac{1}{2\sqrt{\pi}}\int_0^{\infty}\frac{1}{\sqrt{ s}}\Tr(g e^{-s(D_N^+)^2}D_N^+)\, ds
=-\frac{1}{2}\eta_g(D_N^+).
\]
\end{proof}

\begin{remark}\label{rem cyl zero at a}
It follows from Proposition \ref{prop kappa cyl} that for all $n,n' \in N$,
\[
\lambda_s^P(n, 0; n', 0)  = 0.
\]
This is consistent with Remark \ref{rem zero at a}. This also shows that it is important in Proposition \ref{prop eta g} that $a'>0$.
\end{remark}

\begin{proof}[Proof of Corollary \ref{cor APS}]
In the situation of Corollary \ref{cor APS},  $D$ satisfies the conditions of Theorem \ref{thm index general} and Corollary \ref{cor index general}.
In particular, see Propositions 2.5 and 2.12 in \cite{APS1} for invertibility of \eqref{eq DC+} and \eqref{eq DC-} and self-adjointness of \eqref{eq DC-DC+} and \eqref{eq DC+DC-}.
See also Theorem X.25 in \cite{RSII}.
 (Alternatively, one can  use generalisations of these results from \cite{APS1},   Propositions 3.12 and 3.15 in \cite{HW21b}.)
Because $\ind_G(D)(g) = \ind_G^{\APS}(D_Z)(g)$,  Corollary \ref{cor index general} and Proposition \ref{prop eta g} imply the claim.
\end{proof}

\begin{remark}\label{rem APS bdry cond}
The proof of Proposition \ref{prop eta g} can be simplified somewhat if one uses
Proposition \ref{prop eF eP}
 to replace APS heat operators by Dirichlet heat operators and spectral projections. 
 We have chosen to give a more direct proof, to illustrate the role of APS boundary conditions.

We also see in this example why it is important that the heat operator in \eqref{eq def At} is defined with APS-type boundary conditions at $N$. Indeed, consider the  expression $A_g^F(D_C, a')$ analogous to \eqref{eq def At}, with the heat operator $e_P^{-sD_C^- D_C^+}$  replaced by the Dirichlet heat operator $e_F^{-sD_C^- D_C^+}$. By a similar computation to the one in this subsection, one finds that, in the current setting,  $A_g^F(D_C, a')$ equals a version of \eqref{eq Ag cyl 2} where the factor $\frac{a'}{s} - |\lambda|$ is replaced by $\sgn(\lambda)\frac{a'}{s} - |\lambda|$. Because of this difference, Proposition \ref{prop vanish} does not apply, and  $A_g^F(D_C, a')$ is generally not equal to the delocalised $\eta$-invariant in this case. This implies that a version of Theorem \ref{thm index general} with  $A_{g}(D_C, a')$ replaced by  $A_g^F(D_C, a')$ is not true in general.
%
\end{remark}

\bibliographystyle{plain}

\bibliography{mybib}

\end{document}